\newtheorem{Theorem}{Theorem}[section]
\newtheorem{Lemma}[Theorem]{Lemma}
\newtheorem{Remark}[Theorem]{Remark}
\newtheorem{Hypothesis}{Hypothesis}
\numberwithin{equation}{section}
\begin{document}

\def\le{\left}
\def\r{\right}
\def\cost{\mbox{const}}
\def\a{\alpha}
\def\d{\delta}
\def\ph{\varphi}
\def\e{\epsilon}
\def\la{\lambda}
\def\si{\sigma}
\def\La{\Lambda}
\def\B{{\cal B}}
\def\A{{\mathcal A}}
\def\L{{\mathcal L}}
\def\O{{\mathcal O}}
\def\bO{\overline{{\mathcal O}}}
\def\F{{\mathcal F}}
\def\K{{\mathcal K}}
\def\H{{\mathcal H}}
\def\D{{\mathcal D}}
\def\C{{\mathcal C}}
\def\M{{\mathcal M}}
\def\N{{\mathcal N}}
\def\G{{\mathcal G}}
\def\T{{\mathcal T}}
\def\R{{\mathbb R}}
\def\I{{\mathcal I}}

\def\bw{\overline{W}}
\def\phin{\|\varphi\|_{0}}
\def\s0t{\sup_{t \in [0,T]}}
\def\lt{\lim_{t\rightarrow 0}}
\def\iot{\int_{0}^{t}}
\def\ioi{\int_0^{+\infty}}
\def\ds{\displaystyle}
\def\pag{\vfill\eject}
\def\fine{\par\vfill\supereject\end}
\def\acapo{\hfill\break}

\def\beq{\begin{equation}}
\def\eeq{\end{equation}}
\def\barr{\begin{array}}
\def\earr{\end{array}}
\def\vs{\vspace{.1mm}   \\}
\def\rd{\reals\,^{d}}
\def\rn{\reals\,^{n}}
\def\rr{\reals\,^{r}}
\def\bD{\overline{{\mathcal D}}}
\newcommand{\dimo}{\hfill \break {\bf Proof - }}
\newcommand{\nat}{\mathbb N}
\newcommand{\E}{\mathbb E}
\newcommand{\Pro}{\mathbb P}
\newcommand{\com}{{\scriptstyle \circ}}
\newcommand{\reals}{\mathbb R}

\newcommand{\red}[1]{\textcolor{red}{#1}}

\def\Amu{{A_\mu}}
\def\Qmu{{Q_\mu}}
\def\Smu{{S_\mu}}
\def\H{{\mathcal{H}}}
\def\Im{{\textnormal{Im }}}
\def\Tr{{\textnormal{Tr}}}
\def\E{{\mathbb{E}}}
\def\P{{\mathbb{P}}}
\def\span{{\textnormal{span}}}
\title{On the convergence of stationary solutions in the Smoluchowski-Kramers approximation of infinite dimensional systems}
\author{Sandra Cerrai\thanks{Partially supported by the NSF grants DMS-1407615 and DMS-1712934.}\\
\normalsize University of Maryland, College Park\\ United States
\and
Nathan Glatt-Holtz\thanks{Partially supported by the NSF grant DMS-1313272 and the Simons Foundation grant 515990. }\\
\normalsize Tulane University, New Orleans\\ United States}
\date{}

\date{}

\maketitle

\begin{abstract}

We prove the convergence, in the small mass limit, of statistically invariant states for a class of semi-linear damped wave equations, perturbed by an additive Gaussian noise, both with Lipschitz-continuous and with polynomial non-linearities. In particular, we prove that the first marginals of any sequence of invariant measures for the stochastic wave equation converge in a suitable Wasserstein metric  to the unique invariant measure of the limiting stochastic semi-linear parabolic equation obtained in the Smoluchowski-Kramers approximation. The Wasserstein metric is associated to a suitable distance on the space of square integrable functions, that is chosen in such a way that the dynamics of the limiting stochastic parabolic equation is contractive with respect to such a Wasserstein metric. This implies that the limiting result is  a consequence of the validity of a generalized Smoluchowski-Kramers limit at fixed times. The proof of such a generalized limit requires new delicate  bounds for the solutions of the stochastic wave equation, that must be uniform with respect to the size of the mass.

\end{abstract}

\section{Introduction}
\label{sec1}

We are dealing here with the following class of stochastic damped wave equations, defined on a bounded and smooth domain $D\subset \mathbb{R}^d$
\begin{equation}
\label{eq1}
\le\{\begin{array}{l}
\ds{\mu\, \partial^2_t u_\mu(t,\xi)=\Delta u_\mu(t,\xi)-\partial_t u_\mu(t,\xi)+b(\xi,u_\mu(t,\xi))+\partial_tw^Q(t,\xi),}\\
\vs
\ds{u_\mu(0,\xi)=x(\xi),\ \ \partial_t u_\mu(0,\xi)=y(\xi),\ \ \xi \in\,D,\ \ \ \ u_\mu(t,\xi)=0,\ \ \ \ t\geq 0,\ \ \xi \in\,\partial D,}
\end{array}
\r.
\end{equation}
where $0<\mu<<1$. 
Here, $w^Q(t,\xi)$ is a cylindrical Wiener process, white in time and colored in space. The nonlinearity $b:D\times \mathbb{R}\to \mathbb{R}$ is  assumed to be either globally Lipschitz-continuous in the second variable, uniformly with respect to the first one, with its Lipschitz constant dominated by the first eigenvalue of the Laplacian, or only locally Lipschitz continuous and satisfying some polynomial growth and dissipation conditions. In both cases, the identically zero function is globally asymptotically stable in the absence of noise. In the case the nonlinearity $b$ is Lipschitz, we do not impose any restriction on the space dimension $d$, while, in the case of polynomial nonlinearity, we assume that $d=1$.

As a consequence of  Newton's law,  the solution $u_\mu(t,\xi)$ of equation \eqref{eq1} can be interpreted as the displacement field of the particles of a material continuum in a domain $D$, subject to a random external force field $\partial_t w(t,\xi)$ and a damping force proportional to the velocity field (here the proportionality constant is taken equal $1$). The second order differential operator takes into account of the interaction forces between neighboring particles, in  the presence of a non-linear reaction given by $b$. Here  $\mu$ represents the constant density of the particles and we are interested in the regime when $\mu\to 0$, the so-called Smoluchowski-Kramers approximation limit (ref. \cite{kra} and \cite{smolu}).
 
 In \cite{SK1} and \cite{SK2},  the convergence of $u_\mu$ to the solution $u$ of the parabolic problem
 \begin{equation}
\label{eq1-par}
\le\{\begin{array}{l}
\ds{\partial_t u(t,\xi)=\Delta u(t,\xi)+b(\xi,u(t,\xi))+\partial_tw^Q(t,\xi),}\\
\vs
\ds{u(0,\xi)=x(\xi),\ \ \xi \in\,D,\ \ \ \ u(t,\xi)=0,\ \ \ \ t\geq 0,\ \ \xi \in\,\partial D,}
\end{array}
\r.
\end{equation}
has been studied, under analogous conditions on the non-linearity $b$. Namely, it has been  proven that for every $T>0$ and $\e>0$
\begin{equation}
\label{intro2}
\lim_{\mu\to 0}\mathbb{P}\le(\sup_{t \in\,[0,T]}|u_\mu(t)-u(t)|_{L^2(D)}>\e\r)=0.
\end{equation}
In fact, in \cite{cs} it has been shown that, under the condition of Lipschitz-continuity for $b$, the following stronger convergence holds
\begin{equation}
\label{intro1}
\lim_{\mu\to 0}\mathbb{E} \sup_{t \in\,[0,T]}|u_\mu(t)-u(t)|_{L^2(D)}^p=0,
\end{equation}
for every $p\geq 1$.  Note that this type of limit has been addressed in a variety of finite and infinite dimensional context (ref. \cite{bhw}, \cite{SK3}, \cite{f}, \cite{fh}, \cite{hhv}, \cite{hmdvw}, \cite{hu}, \cite{lee}, \cite{spi} for the finite dimension and \cite{SK1},  \cite{SK2},  \cite{CFS}, \cite{sal}, \cite{sal2}, \cite{cs}, \cite{Lv2}, \cite{Lv3}, \cite{Lv4}, \cite{Lv1}, \cite{Nguyen}, \cite{salins18} for the 
infinite dimension).

\medskip

However, once one has proved the validity of the small mass limits \eqref{intro2} and \eqref{intro1} on a fixed time interval $[0,T]$, it is of interest to compare the long-time behavior of the second order system \eqref{eq1}, with that of the first order system \eqref{eq1-par}. In particular, it is desirable to identify conditions for the convergence of statistically steady states. 

In \cite{SK1},   the long time behavior
of equations \eqref{eq1} and \eqref{eq1-par} has been compared, under the assumption that the two systems are of gradient type.
Actually, in the case of white noise in space and time (that is $Q=I$) and dimension $d=1$, an
explicit expression for the Boltzman distribution of the process
$z_\mu(t):=(u_{\mu}(t), \partial u_{\mu}/\partial t(t))$ in the phase space $\H:=L^2(0,L)\times H^{-1}(0,L)$ has been
given. Of course, since in the functional space $\H$ there is no
analogous of the Lebesgue measure,
 an auxiliary Gaussian measure has been introduced, with respect to which
 the density of the Boltzman distribution has  been written. This
auxiliary Gaussian measure is the stationary measure of the linear
wave equation related to problem \eqref{eq1}.
In particular, it has been shown  that  the first marginal of the invariant measure
associated with the process $z_{\mu}(t)$ does not depend on $\mu>0$ and  coincides with the
invariant measure for the heat equation \eqref{eq1-par}.

\medskip

In the case of non-gradient systems, that is when we are not dealing with space-time white noise,  there is no  explicit expression for the invariant measure $\nu_\mu$ associated with system \eqref{eq1} and there is no reason to expect that the first marginal of $\nu_\mu$ does not depend on $\mu$ or  coincides with the invariant measure $\nu$ of system \eqref{eq1-par}. 

Nevertheless, in the present paper we are going to show that in the limit, as the mass $\mu$ goes to zero,  the first marginal of any  invariant measure $\nu_\mu$ of the second order system \eqref{eq1} approaches in a suitable manner  the invariant measure $\nu$ of the first order system \eqref{eq1-par}. Namely, we are going to prove that 
\begin{equation}
\label{intro4}
\lim_{\mu\to 0} \rho_\a\le((\Pi_1\nu_\mu)^\prime,\nu\r)=0,
\end{equation}
where  $(\Pi_1\nu_\mu)^\prime$ is the extension to $L^2(D)$ of the first marginal of the invariant measure $\nu_\mu$ in $H^1(D)\times L^2(D)$ and $\rho_\a$ is the Wasserstein metric on $P(L^2(D))$ associated with a suitable distance $\a$ on $L^2(D)$, to be determined on the basis of the type of non-linearity $b$ we are dealing with.

Actually, the fundamental fact that we need to be true in order to be able to prove   \eqref{intro4} is that there exist a distance $\a$ in $L^2(D)$ and a constant $\d>0$ such that 
\begin{equation}
\label{intro5}
\rho_\a\le(P_t^\star \nu_1,P_t^\star \nu_2\r)\leq c\,e^{-\delta t}\rho_\a(\nu_1,\nu_2),\ \ \ \ t\geq 0,
\end{equation}
for every $\nu_1, \nu_2 \in\,\mathcal{P}r(H)$. Here $P^\star_t$ is the adjoint of the transition semigroup associated with equation \eqref{eq1-par}. As shown in \cite{HM},  a possible proof of \eqref{intro5} is based on a suitable irreducibility condition for system \eqref{eq1-par}, together with suitable large-time smoothing estimates for the Markovian dynamics related closely related to the  so-called {\em asymptotic strong Feller property} (see \cite{HM06}), along with  some estimates for  the solution of the first variation equation of \eqref{eq1-par}. In case of a polynomial non-linearity $b$, the irreducibility condition is uniform with respect to the initial condition, so that we can take $\a$ equal to the usual distance in $L^2(D)$. In case of a Lipschitz $b$, the irreducibility condition is uniform only with respect to initial conditions on  bounded sets of $L^2(D)$, so that a Lyapunov structure has to be taken into consideration. This means in particular that we can prove \eqref{intro5} by using the method developed in \cite{HM}, once we choose
\[\a(x,y)=\inf_{\gamma}\int_0^1 \exp\le(\eta\,|\gamma(t)|^2_H\r)\,|\gamma^\prime(t)|_H\,dt,\]
 where the infimum is taken over all paths $\gamma$ such that $\gamma(0)=x$ and $\gamma(1)=y$ and where $\eta$ is some positive constant to be determined.

Next, as a consequence of \eqref{intro5}, due to the invariance of $\nu_\mu$ and $\nu$, we have
\[\rho_\a\le((\Pi_1\nu_\mu)^\prime,\nu\r)\leq \rho_\a\le((\Pi_1(P_t^\mu)^\star \nu_\mu)^\prime,P_t^\star(\Pi_1\nu_\mu)^\prime\r)+c\,e^{-\delta t}\rho_\a\le((\Pi_1\nu_\mu)^\prime,\nu\r).\]
Therefore, if we pick $t_\star>0$ such that $c\,e^{-\delta t_\star}<1/2$, we have
\[\rho_\a\le((\Pi_1\nu_\mu)^\prime,\nu\r)\leq 2\,\rho_\a\le((\Pi_1(P_t^\mu)^\star \nu_\mu)^\prime,P_t^\star(\Pi_1\nu_\mu)^\prime\r),\ \ \ \ t\geq t_\star.\]
Since the Wasserstein distance $\rho_\a$ between two measures $\nu_1$ and $\nu_2$ is given as
\[\rho_\a(\nu_1,\nu_2)=\inf \mathbb{E}\,\a(X_1,X_2),\]
where $\mathcal{L}(X_i)=\nu_i$, for $i=1,2$, we have
\[\rho_\a\le((\Pi_1(P_t^\mu)^\star \nu_\mu)^\prime,P_t^\star(\Pi_1\nu_\mu)^\prime\r)\leq \mathbb{E}\,
\a(u_\mu^{\gamma_\mu}(t),u^{\theta_\mu}(t)),\]
where $\gamma_\mu=(\theta_\mu,\eta_\mu)$ is a $H^1(D)\times L^2(D)$-valued random variable, distributed as $\nu_\mu$. In particular, this implies that the proof of  \eqref{intro4} reduces to the proof of the following limit
\begin{equation}
\label{intro6}
\lim_{\mu \to 0} \mathbb{E}\,
\a(u_\mu^{\gamma_\mu}(t),u^{\theta_\mu}(t))=0,\end{equation}
 for a fixed time $t$ sufficiently large. 
 
The limit \eqref{intro6}  seems analogous to the Smoluchowski-Kramers approximation \eqref{intro1} proven in \cite{cs}, at least when $\a$ is the usual distance in $L^2(D)$. In fact, limit \eqref{intro6} is considerably stronger, and more insidious to be proven,  in comparison to  \eqref{intro1}, as \eqref{intro1} is only valid for fixed deterministic initial condition, while \eqref{intro6} has to be verified for random initial conditions, depending on $\mu$ through the invariant measure $\nu_\mu$. As a matter of fact, the proof of \eqref{intro6} is the major challenge of the paper and requires a series of delicate and completely new uniform bounds for  the solution of equation \eqref{eq1}, including exponential moments.

To this purpose, we would like to mention that the convergence of stationary solutions, as $\mu$ goes to zero, has been also studied in \cite{Lv1}, in the case there is  $\sqrt{\mu}$ in front of the noise. This case is of course completely different from the case we are considering here and it is also considerably easier from a mathematical point of view.Actually, it does not require the sophisticated bounds that we have proved in the present paper and does not involve invariant measures.

\medskip

To conclude, we would like to remark that our result about the limiting behavior of stationary solutions of system \eqref{eq1}, in the Smoluchowski-Kramers approximation, is consistent to what  proven in \cite{sal} and \cite{sal2} for the limiting behavior of the quasi-potential $V_\mu(u,v)$, that describes the asymptotics of the exit times and the large deviation principle for the invariant measures in equation \eqref{eq1}.
Actually,   in \cite{sal}  the quasi-potential  $V_\mu(u,v)$ associated with \eqref{eq1} has been explicitly computed under the assumption that system \eqref{eq1} is of gradient type and it has been shown that for every fixed $\mu>0$
\begin{equation}
\label{intro3}
V_\mu(u):=\inf_{v \in\,H^{-1}(D)} V_\mu(u,v)=V(u),
\end{equation}
where $V(u)$ is the quasi-potential associated with \eqref{eq1-par}.
Moreover, in \cite{sal2} the non-gradient case has been considered and it has been proven that in spite of the fact that there is no explicit expression for $V_\mu(u,v)$, nevertheless
\[\lim_{\mu\to 0}V_\mu(u)=V(u),\]
for every sufficiently regular $u$.

\bigskip

{\em Organization of the paper:} In Section \ref{sec2}, we introduce all of our notations and assumptions on the non-linearity $b$ and on the noise $w^Q$ and we discuss some  consequences of these choices. In Section \ref{sec4} we deal with the semigroup $S_\mu(t)$ associated with the linear damped wave equation and we give  some generalization of results proved in previous papers. Such generalizations are needed to prove the a-priori bounds required in the proof of \eqref{intro6}.  In Section \ref{sec3} we collect some results about equations \eqref{eq1} and \eqref{eq1-par} and their transition semigroups. In Section \ref{sec5} we give the main result of the paper and we introduce the method of the proof. In Section \ref{sec6} we prove limit \eqref{intro6} and in the following Sections \ref{sec7}, \ref{sec8} and \ref{sec9} we give a proof of all the  lemmas needed in the proof of limit \eqref{intro6}. Finally, in Appendix \ref{appa} we recall for the reader's convenience the proof of the asymptotic strong Feller property for equation \eqref{eq1-par}.

 \section{Assumptions and  notations }

 \label{sec2}
 
 We assume that $D$ is a bounded open domain in $\mathbb{R}^d$, having a smooth boundary. We denote by $H$ the Hilbert space $L^2(D)$, endowed with the usual scalar product $\langle\cdot,\cdot\rangle_H$ and the corresponding norm $|\cdot|_H$.
The norm in $L^p(D)$ is denoted by $|\cdot|_{L^p}$.

 We denote by $A$ the realization of the Laplace operator in $H$, endowed with  Dirichlet boundary conditions. It is known that there exists a complete orthonormal system $\{e_k\}_{k \in\,\mathbb{N}}\subset H$ and a sequence of positive real numbers $\{\a_k\}_{k \in\,\mathbb{N}}$, diverging to $\infty$  such that
 \begin{equation}
 \label{f2002}
 A e_k =-\a_k e_k,\ \ \ \ \ k \in\,\mathbb{N}.\end{equation}

 Next, for any $\beta \in\,\mathbb{R}$, we denote by $H^\beta$ the completion of $C_0(D)$ with respect to the norm
 \[|x|_{H^\beta}^2=\sum_{k=1}^\infty \a_k^\beta\,\le|\langle x,e_k\rangle_H\r|^2.\]
 Moreover, we denote $\mathcal{H}_\beta=:H^\beta\times H^{\beta-1}$, and for every $(x,y) \in\,\mathcal{H}_\beta$, we set
 \[\Pi_1(x,y)=x,\ \ \ \ \ \Pi_2(x,y)=y.\]
 
 Throughout the paper, for every $\mu>0$ and $\beta \in\,\mathbb{R}$, we define
\begin{equation}
\label{amu}
A_\mu(x,y)=\frac 1\mu\,(\mu y,Ax-y),\ \ \ \ \ (x,y) \in\,D(A_\mu)=\mathcal{H}_{\beta+1}.\end{equation}
It can be proven that $A_\mu$ is the generator of a strongly continuous group of bounded linear operators $\{S_\mu(t)\}_{t\geq 0}$ on each $\mathcal{H}_\beta$ (for a proof see \cite[Section 7.4]{pazy}).

\medskip

In this paper, we assume that the cylindrical Wiener process $w^Q(t,\xi)$ is white in time and colored in space, with spatial covariance $Q^2$, for some  $Q \in\,\mathcal{L}^+(H)$, where $\mathcal{L}^+(H)$ is the space of non-negative and symmetric bounded linear operators on $H$. This means that $w^Q(t,\xi)$ can be formally represented as the sum
\[w^Q(t,\xi)=\sum_{k=1}^\infty Q e_k (\xi) \beta_k(t),\]
where $\{e_k\}_{k \in\,\mathbb{N}}$ is the complete orthonormal system of $H$ that diagonalizes $A$ and $\{\beta_k(t)\}_{k \in\,\mathbb{N}}$ is a sequence of mutually independent Brownian motions, all defined on the same stochastic basis $(\Omega, \mathcal{F}, \{\mathcal{F}_t\}_{t\geq 0}, \mathbb{P})$. 

We impose the following two conditions on $Q$ in what follows. The first condition ensures sufficient spatial smoothness to define solutions in the right functional space and prove a-priori bounds for them. The second condition guarantees that the noise acts directly through a large enough subspace of the phase space, in order to ensure   smoothing in the Markovian dynamics associated to \eqref{eq1-par} (see also remark \ref{RA1})

\begin{Hypothesis}
\label{H1}
\begin{enumerate}
\item There exists a non-negative sequence $\{\la_k\}_{k \in\,\mathbb{N}}$ such that
\begin{equation}
\label{fine333}
Q e_k=\la_k\,e_k,\ \ \ \ k \in\,\mathbb{N}.\end{equation}
Moreover, there exists $\d>0$ such that 
\begin{equation}
\label{ghc301}
\text{{\em Tr}}\, [Q^2(-A)^\d]=\sum_{k=1}^\infty \la_k^2 \a_k^\d <\infty.\end{equation}
\item There exist $\bar{n} \in\,\mathbb{N}$ and $c>0$ such that
\begin{equation}
\label{ghc411}
|Q x|_H\geq c\,|P_{\bar{n}}\,x|_H,\ \ \ \ x \in\,H,
\end{equation}
where $P_n$ is the projection of $H$ onto $H_n:=\text{span}\le\{e_1,\ldots,e_n\r\}$, for every $n \in\,\mathbb{N}$ (concerning $\bar{n}$, see \eqref{sk3}).
\end{enumerate}
\end{Hypothesis}

In what follows, for any $\mu>0$ we define the bounded linear operator
\begin{equation}
\label{qmu}
Q_\mu:H\to \H_1,\ \ \ \ x \in\,H\mapsto \frac1\mu(0,Qx) \in\,\H_1.\end{equation}

Concerning the non-linearity $b$, we assume the following conditions.

\begin{Hypothesis}
\label{H3}
\begin{enumerate}
\item The mapping  $b:D\times \mathbb{R}\to \mathbb{R}$ is measurable and $b(\xi,\cdot):\mathbb{R}\to\mathbb{R}$ is of class $C^1$, for almost all $\xi \in\,D$. Moreover,
\begin{equation}
\label{sk-bound}
\sup_{(\xi,\si) \in\,D\times \mathbb{R}}\partial_\si b(\xi,\si)=:L_b<\infty.
\end{equation}
\item There exists $\la \in\,[1,3]$ such that for every $\si \in\,\mathbb{R}$
\begin{equation}
\label{sk1}
\sup_{\xi \in\,D}|b(\xi,\si)|\leq c\,\le(1+|\si|^\la\r),\ \ \ \ \ \sup_{\xi \in\,D}|\partial_\si b(\xi,\si)|\leq c\,\le(1+|\si|^{\la-1}\r).
\end{equation}
\item If $\la=1$,  we have
\begin{equation}
\label{sk3-tris}
L_b<\a_1.
\end{equation}
\item If $\la \in\,(1,3]$, we have
\begin{equation}
\label{sk3}
L_b<\a_{\bar{n}},
\end{equation}
where $\bar{n}$ is the integer introduced in \eqref{ghc411}.
Moreover, there exist
 $c_1>0$ and $c_2 \in\,\mathbb{R}$  such that for every $(\xi,\si)\in\,D\times \mathbb{R}$
\begin{equation}
\label{sk3-bis}
\sup_{\xi\in\,D} b(\xi,\si)\si \leq -c_1\,|\si|^{\la+1}+c_2.
\end{equation}

\item If $\la \in\,(1,3]$, then $d=1$ and $D=[0,L]$, for some $L>0$.

\end{enumerate}

\end{Hypothesis}

\begin{Remark}
{\em 
\begin{enumerate}
\item  Let $a:D\to \mathbb{R}$ be a measurable function such that 
\[0<a_0\leq a(\xi)\leq a_1,\ \ \ \ \ \xi \in\,D,\]
and let $f:D\times \mathbb{R}\to\mathbb{R}$ be a measurable mapping such that $f(\xi,\cdot):\mathbb{R}\to\mathbb{R}$ is Lipschitz continuous, uniformly with respect to $\xi \in\,{D}$.
Then, if $\la \in\,(1,3]$, the function $b$ defined by
\[b(\xi,\si)=-a(\xi)|\si|^{\la-1}\si+f(\xi,\si),\ \ \ \ (\xi,\si) \in\,D\times \mathbb{R},\]
satisfies all conditions in Hypothesis \ref{H3}. In case $f=0$, $b$ is the so-called Klein-Gordon nonlinearity. 
\item If $\la=1$, for every $\e>0$ we can find $c_\e>0$ such that
\begin{equation}
\label{ghc421}
\sup_{\xi \in\,D}b(\xi,\si)\si\leq (L_b+\e)\,|\si|^2+c_\e,\ \ \ \ \ \si \in\,\mathbb{R},
\end{equation}
where $L_b$ is the constant introduced in \eqref{sk-bound}.
\item If $\la \in\,(1,3]$, from \eqref{sk1} and \eqref{sk3-bis} it  follows that there exist $\kappa_1>0$ and $\kappa_2 \in\,\mathbb{R}$  such that for every $\si, \rho \in\,\mathbb{R}$
\begin{equation}
\label{ghc401}
\sup_{\xi \in\,D} b(\xi,\si+\rho)\si\leq -\kappa_1\,|\si|^{\la+1}+\kappa_2\le(1+|\rho|^{\la+1}\r).
\end{equation}
\item If $\la=1$, as a consequence of \eqref{sk3-tris}, it is possible to prove that for every $\xi \in\,D$ there exists an antiderivative $\mathfrak{b}(\xi,\cdot)$ of $b(\xi,\cdot)$ such that
\begin{equation}
\label{ghc420}
\sup_{\xi \in\,D}\mathfrak{b}(\xi,\si)\leq L_b\,|\si|^2,\ \ \ \ \ \si \in\,\mathbb{R},
\end{equation}
where $L_b$ is the constant introduced in \eqref{sk-bound}.
\item In the same way, when $\la \in\,(1,3]$, due to \eqref{sk3} and \eqref{sk3-bis} it is possible to prove that for every $\xi \in\,D$ there exists an antiderivative $\mathfrak{b}(\xi,\cdot)$ of $b(\xi,\cdot)$ such that
\begin{equation}
\label{sk2}
\sup_{\xi \in\,D} \mathfrak{b}(\xi,\si) \leq -\kappa\,|\si|^{\la+1},\ \ \ \ \si \in\,\mathbb{R},
\end{equation}
for some constant $\kappa>0$.

\end{enumerate}

\qed}
\end{Remark}

\bigskip

Now, for every $x \in\,H$, we define
\[B(x)(\xi)=:b(\xi,x(\xi)),\ \ \ \ \xi \in\,D.\]
Notice that if $\la=1$, then $b(\xi,\cdot):\mathbb{R}\to\mathbb{R}$ is Lipschitz-continuous, uniformly with respect to $\xi \in\,D$, so that $B:H\to H$ is Lipschitz-continuous. 
Moreover, as shown in \cite[Theorem 4]{RS}, for every $u \in\,{H^\beta}$, with $0\leq \beta\leq 1$, we have
\begin{equation}
\label{ghc513}
|B(u)|_{H^\beta}\leq c\le(1+|u|_{H^\beta}\r).\end{equation}
Notice moreover that in the general case, due to \eqref{sk1}, for every $u \in\,H^1\cap L^\infty(D)$
\begin{equation}
\label{fine339}
|B(u)|_{H^1}\leq |\partial_\si b(\cdot,u)|_{L^\infty}|u|_{H^1}\leq c\le(1+|u|_{L^\infty}^{\la-1}\r)|u|_{H^1}.
\end{equation}

Next,
for every $\mu>0$ and $\beta \in\,[0,1]$, we set
\begin{equation}
\label{bmu}
B_\mu(x,y)(\xi)=:\frac 1\mu (0,B(x))(\xi)=\frac 1\mu \,\le(0,b(\xi,x(\xi)\r),\ \ \ \ \xi \in\,D,\ \ \ \ (x,y) \in\,\mathcal{H}_\beta.\end{equation}

If $\la=1$, then  the Lipschitz continuity of $B:H\to H$ implies that, for every $z_1=(x_1,y_1)$ and $z_2=(x_2,y_2)$ in  $\mathcal{H}_\beta$, 
\[\begin{array}{l}
\ds{|B_\mu(z_1)-B_\mu(z_2)|_{\mathcal{H}_\beta}=\frac 1\mu\,|B(x_1)-B(x_2)|_{H^{\beta-1}}\leq \frac c\mu\,|B(x_1)-B(x_2)|_{H}}\\
\vs
\ds{\leq \frac{cM}\mu\,|x_1-x_2|_H\leq \frac{cM}\mu\,|z_1-z_2|_{\mathcal{H}_\beta}.}
\end{array}\]
This means that the mapping $B_\mu:\mathcal{H}_\beta\to \mathcal{H}_\beta$ is Lipschitz continuous, for every $\beta \in\,[0,1]$ and $\mu>0$.

If $\la \in\,(1,3]$, the mapping $b(\xi,\cdot):\mathbb{R}\to \mathbb{R}$ is only locally Lipschitz-continuous and does not have sub-linear growth in general. 
Since
\[\sup_{(\xi,\si) \in\,D\times \mathbb{R}}\,\frac{|\mathfrak{b}(\xi,\si)|}{(|\si|^{\la+1}+1)}<\infty,\]
due to \eqref{sk2}, we have
\begin{equation}
\label{ghc140}
-c\,\le( |\si|^{\la+1}+1\r)\leq \mathfrak{b}(\xi,\si)\leq -\kappa\,|\si|^{\la+1},\ \ \ \ (\xi,\si) \in\,D\times \mathbb{R}.
\end{equation}

On the other hand, if $\la=1$, due to \eqref{ghc420} we have
\begin{equation}
\label{ghc430}
-c\,\le(1+|\si|^2\r)\leq \mathfrak{b}(\xi,\si)\leq L_b\,|\si|^2,\ \ \ \ \ (\xi,\si) \in\,D\times \mathbb{R},
\end{equation}
where $L_b$ is the constant introduced in \eqref{sk3-tris}.

Recall that when $\la \in\,(1,3]$ we assume $D=[0,L]$, so that  $H^1\hookrightarrow L^\infty(D)\hookrightarrow L^p(D)$, for every $p\geq 1$. This implies, together with \eqref{sk1}, that for every $z=(x,y) \in\,\mathcal{H}_1$
\[|B_\mu(z)|_{\mathcal{H}_1}=\frac 1\mu\,|b(\cdot,x)|_H\leq \frac c\mu\,\le(1+|x|^\la_{L^{2\la}(D)}\r)\leq \frac c\mu \le(1+|x|^\la_{H^1}\r)\leq \frac c\mu \le(1+|z|^\la_{\mathcal{H}_1}\r),\]
for every $\mu>0$, for some $c$ independent of $\mu$.
Moreover, by the same arguments, for every $z_1, z_2 \in\,\mathcal{H}_1$
\begin{equation}
\label{fine334}
|B_\mu(z_1)-B_\mu(z_2)|_{\mathcal{H}_1}\leq \frac c\mu\,|x_1-x_2|_{H^1}\le(1+|x_1|_{H^1}^{\la-1}+|x_2|_{H^1}^{\la-1}\r).\end{equation}
In particular, $B_\mu:\mathcal{H}_1\to\mathcal{H}_1$ is locally Lipschitz-continuous.

\section{The approximating semigroup $S_\mu(t)$}
\label{sec4}

In the present section, we present some modifications of the results proven in \cite{cs} about the approximating semigroup $S_\mu(t)$, in the slightly different setting where the motion of the particle with small mass is subject to a magnetic field and  a friction. These modifications are necessary here because, in order to prove \eqref{ghc416} we need some uniformity with respect to the initial conditions on bounded sets.

For every $\mu>0$, $\beta \in\,\mathbb{R}$ and $(x,y) \in\,\mathcal{H}_\beta$, we shall denote
\[u_\mu(t):=\Pi_1 S_\mu(t)(x,y),\ \ \ \ \ \ v_\mu(t):=\Pi_2 S_\mu(t)(x,y),\ \ \ \ \ t\geq 0,\]
where $S_\mu(t)$ is the semigroup generated in $\mathcal{H}_\beta$ by the operator $A_\mu$ and introduced in \eqref{amu}.

In \cite[Lemma 3.1]{cs}, it has been shown that for every $\mu>0$, $\beta \in\,\mathbb{R}$ and $(x,y) \in\,\mathcal{H}_\beta$ 
\begin{equation} \label{energy-est-1}
  \mu \left|v_\mu(t) \right|_{H^{\beta-1}}^2 + |u_\mu(t)|_{H^\beta}^2+2\int_0^t|v_\mu(s)|_{H^{\beta-1}}^2\,ds
 = \mu|y|_{H^{\beta-1}}^2 + |x|^2_{H^\beta},
\end{equation}
and
\begin{equation} \label{energy-est-2}
  \mu |u_\mu(t)|_{H^\beta}^2 + \left| \mu v_\mu(t) + u_\mu(t) \right|_{H^{\beta-1}}^2+2\int_0^t|u_\mu(s)|_{H^\beta}^2\,ds
=\mu |x|_{H^\beta}^2 + |\mu y + x|_{H^{\beta -1}}^2.
\end{equation}
In particular, the two equalities above imply that for any $\mu>0$ there exists $c_{\mu}>0$ such that for any $(x,y) \in\,\mathcal{H}_\beta$
\[\int_0^\infty |S_\mu(t)(x,y)|_{\mathcal{H}_\beta}^2\,dt\leq \frac {c_{\mu}}{2}|(x,y)|_{\mathcal{H}_\beta}^2,\]
and, as a consequence of the Datko theorem, we can conclude that there exist $M_{\mu,\beta},$ and  $\omega_{\mu,\beta}>0$ such that
\[\|S_\mu(t)\|_{{\cal L}(\mathcal{H}_\beta)}\leq M_{\mu,\beta}\,e^{-\omega_{\mu,\beta}t},\ \ \ \ t\geq 0.\]

Moreover, in \cite[Lemma 3.2]{cs} it has been proven that 
for any $\mu>0$,  and for any $\beta \in \reals$ it holds
\begin{equation}
\label{s2}
\left|\Pi_1 S_\mu(t)(0,y) \right|_{H^\beta} \leq 2\, \mu\, |y|_{H^{\beta}},\ \ \ t\geq 0,\ \ \ \ y \in\,H^{\beta}.\end{equation}

The following result has been proven in \cite[Lemma 3.3]{cs}. Notice that in \cite{cs}, estimate \eqref{bound-1} is given only for $\beta=0$, but the case $\beta>0$ is an easy generalization and we omit its proof.

\begin{Lemma}
\label{l3}
For any $0<\rho<1$ there exists a constant $c_\rho>0$ such that for any $k \in \nat$ and $\beta\geq 0$
  \begin{equation} \label{bound-1}
    \sup_{\mu>0} \int_0^\infty s^{-\rho} \left| \Pi_1 S_\mu(s) Q_\mu e_k \right|_{H^\beta}^2 ds \leq c_\rho\, \frac{\la_k^2}{\alpha_k^{1-(\rho+\beta)}},
  \end{equation}
\end{Lemma}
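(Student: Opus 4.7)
The crux is a dimensional reduction. Since $Ae_k = -\alpha_k e_k$ and $Qe_k = \lambda_k e_k$, the two-dimensional subspace $V_k := \operatorname{span}\{(e_k,0),(0,e_k)\} \subset \mathcal{H}_\beta$ is invariant under $A_\mu$ (hence under $S_\mu(s)$), and contains $Q_\mu e_k = (0,\lambda_k e_k/\mu)$. Therefore $\Pi_1 S_\mu(s)Q_\mu e_k$ lies in $\operatorname{span}(e_k)$ for every $s\geq 0$ and every $\mu>0$, and since $|c\,e_k|_{H^\beta}^2 = c^2\alpha_k^\beta$ we get
\begin{equation*}
\int_0^\infty s^{-\rho}\,\left|\Pi_1 S_\mu(s) Q_\mu e_k \right|_{H^\beta}^2\,ds \;=\; \alpha_k^\beta\int_0^\infty s^{-\rho}\,\left|\Pi_1 S_\mu(s) Q_\mu e_k \right|_H^2\,ds.
\end{equation*}
Pulling $\alpha_k^\beta$ outside produces precisely the exponent $1-(\rho+\beta)$ in the denominator of the stated bound, so the claim follows immediately from the $\beta=0$ case, which is \cite[Lemma 3.3]{cs}.

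For completeness, the scalar estimate reduces to an elementary analysis. On $V_k$ the restriction of $A_\mu$ has eigenvalues $\lambda_\pm^{(k)} = (-1\pm\Delta_k)/(2\mu)$, where $\Delta_k := \sqrt{1-4\alpha_k\mu}$, and a direct computation gives
\begin{equation*}
\Pi_1 S_\mu(s) Q_\mu e_k \;=\; \frac{\lambda_k}{\Delta_k}\bigl(e^{\lambda_+^{(k)}s} - e^{\lambda_-^{(k)}s}\bigr)\,e_k \;=\; \frac{2\lambda_k}{\Delta_k}\,e^{-s/(2\mu)}\sinh\!\bigl(s\Delta_k/(2\mu)\bigr)\,e_k,
\end{equation*}
the second form removing the apparent singularity at $\Delta_k=0$. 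In the overdamped regime $4\alpha_k\mu\leq 1$ both eigenvalues are real and negative with $|\lambda_+^{(k)}| = 2\alpha_k/(1+\Delta_k) \geq \alpha_k$; using $|e^{\lambda_+^{(k)}s}-e^{\lambda_-^{(k)}s}| \leq e^{\lambda_+^{(k)}s}$ and a standard Gamma-function calculation then gives the bound $c_\rho\alpha_k^{-(1-\rho)}$. In the underdamped regime $4\alpha_k\mu>1$, we have $\lambda_\pm^{(k)} = -1/(2\mu)\pm i\omega_k$ with $\omega_k = \sqrt{4\alpha_k\mu-1}/(2\mu)$, so the squared ratio equals $\mu^{-2}\omega_k^{-2}e^{-s/\mu}\sin^2(\omega_k s)$; since $1/\mu < 4\alpha_k$ and $\mu^2\omega_k^2\geq c\,\alpha_k\mu$ in this regime, evaluating the $e^{-s/\mu}$ integral yields the same $c_\rho\alpha_k^{-(1-\rho)}$.

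The only genuinely delicate point is uniformity of the constants in $\mu$ and $k$ across all three regimes (overdamped, critical, underdamped), which is why the argument proceeds by case separation; the transitional range $\Delta_k\to 0$ is handled as a removable singularity in the $\sinh$-form above and is absorbed into the overdamped estimate. No substantial obstacle arises beyond this bookkeeping, which is why \cite{cs} characterizes the extension to $\beta>0$ as routine.
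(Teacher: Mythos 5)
Your dimensional-reduction argument in the first paragraph is exactly right and is what the paper intends by calling the case $\beta>0$ ``an easy generalization'' of the case $\beta=0$ proved in \cite{cs}: since $A_\mu$ leaves $V_k = \span\{(e_k,0),(0,e_k)\}$ invariant and $Q_\mu e_k \in V_k$, the trajectory $\Pi_1 S_\mu(s)Q_\mu e_k$ stays on $\span(e_k)$, and the identity $|c\,e_k|_{H^\beta}^2 = \alpha_k^\beta\,|c\,e_k|_H^2$ pulls the factor $\alpha_k^\beta$ cleanly out of the integral. That part is correct and complete, and it is the only thing the paper itself actually proves for this lemma (the $\beta=0$ case is cited, not re-derived).

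The ``for completeness'' sketch of the scalar ($\beta=0$) case, however, has a gap precisely in the critical window you flag. In the overdamped regime the integrand is $\lambda_k^2\,\Delta_k^{-2}\bigl(e^{\lambda_+ s}-e^{\lambda_- s}\bigr)^2$, and the bound $\bigl|e^{\lambda_+ s}-e^{\lambda_- s}\bigr|\leq e^{\lambda_+ s}$ followed by a Gamma-function computation produces $\Gamma(1-\rho)\,\lambda_k^2\,\Delta_k^{-2}\,|2\lambda_+|^{\rho-1}$, leaving the factor $\Delta_k^{-2}$ unbounded as $\Delta_k = \sqrt{1-4\alpha_k\mu}\to 0$. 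Saying it ``is absorbed into the overdamped estimate'' does not make it go away: you need a second bound of the form $\bigl|e^{\lambda_+ s}-e^{\lambda_- s}\bigr|\leq |\lambda_+-\lambda_-|\,s\,e^{\lambda_+ s}$ to kill the $\Delta_k^{-2}$ when $\Delta_k$ is small, and then to interpolate between the two near $\Delta_k\asymp 1/2$. Symmetrically, in the underdamped regime the inequality $\mu^2\omega_k^2\geq c\,\alpha_k\mu$ you invoke reads $(4\alpha_k\mu-1)/4\geq c\,\alpha_k\mu$, which holds only for $4\alpha_k\mu$ bounded away from $1$ (e.g.\ $4\alpha_k\mu\geq 2$ with $c=1/2$), so the band $1<4\alpha_k\mu<2$ is not covered by the estimate as written and again needs the $\sin^2(\omega_k s)\leq (\omega_k s)^2$ bound. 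None of this affects your conclusion, since you are in any case appealing to \cite[Lemma 3.3]{cs} for $\beta=0$ and the reduction to that case is sound; but the sketch as it stands would not compile into a self-contained proof of the scalar estimate.
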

where $Q_\mu$  is as in \eqref{qmu} and $\la_k$ and $\a_k$ denote the eigenvalues of $Q$ and $A$, respectively. 
In particular, \eqref{bound-1} implies that for every $p\geq 1$, $T>0$ and $\d>0$
\begin{equation}
\label{ghc303}
\sum_{k=1}^\infty \frac{\la_k^2}{\a_k^{1-\d}}<\infty \Longrightarrow \sup_{\mu>0}\,\mathbb{E}\sup_{t \in\,[0,T]}|\Pi_1 \Gamma_\mu(t)|^p_{H^\beta}<+\infty,\ \ \ \ \beta<\d,
\end{equation}
where $\Gamma_\mu(t)$ is the stochastic convolution as in \eqref{ghc3}.

\medskip

Next, we denote by $S(t)$, $t\geq 0$, the semigroup generated by the operator $A$ in $H^\beta$, for every $\beta \in\,\mathbb{R}$.
The following two approximation results are a  modification of what proven in \cite[Theorem 3.5 and Corollaries 3.6 and 3.7]{cs}. Such a modification is critical for the proof of Theorem \ref{fund}, our main finite-time convergence result below.

\begin{Lemma}
\label{l4}
Let $T>0$ and $\beta>0$ be fixed. Then, for every $R>0$ it holds
  \begin{equation} 
  \label{Smu-to-Se-at-a-point}
\lim_{\mu\to 0}\ \sup_{|x|_{H^\beta}+\sqrt{\mu}\,|y|_{H}\leq R}\ 
 \sup_{t \leq T} |\Pi_1 S_\mu(t)(x,y) - S(t) x|_{H}=0.
  \end{equation}
  \end{Lemma}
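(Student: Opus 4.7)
The plan is to use linearity of $S_\mu(t)$ to decompose $(x,y)=(x,0)+(0,y)$, dispose of the contribution from $(0,y)$ via \eqref{s2}, and then reduce the claim to a uniform convergence statement for the operator $x\mapsto \Pi_1 S_\mu(t)(x,0)$ over the $H^\beta$-ball of radius $R$.

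First, by \eqref{s2} one has
\[
|\Pi_1 S_\mu(t)(0,y)|_H\leq 2\mu|y|_H = 2\sqrt{\mu}\,\bigl(\sqrt{\mu}\,|y|_H\bigr)\leq 2\sqrt{\mu}\,R,
\]
which vanishes as $\mu\to 0$, uniformly in $t\geq 0$ and over the set $\{|x|_{H^\beta}+\sqrt{\mu}\,|y|_H\leq R\}$. It therefore suffices to prove
\[
\lim_{\mu\to 0}\ \sup_{|x|_{H^\beta}\leq R}\ \sup_{t\leq T}\bigl|\Pi_1 S_\mu(t)(x,0)-S(t)x\bigr|_H=0.
\]

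Introduce the linear operator $D_\mu(t):H\to H$ defined by $D_\mu(t)x:=\Pi_1 S_\mu(t)(x,0)-S(t)x$. From \eqref{energy-est-1} applied with $\beta=0$ and $y=0$ one gets $|\Pi_1 S_\mu(t)(x,0)|_H\leq |x|_H$, and since $S(t)$ is a contraction on $H$, this produces the uniform operator bound
\[
\sup_{\mu>0,\,t\geq 0}\|D_\mu(t)\|_{\mathcal{L}(H)}\leq 2.
\]
On the other hand, Theorem 3.5 and Corollary 3.6 of \cite{cs}, applied to the initial datum $(x,0)\in\mathcal H_\beta$, furnish the pointwise convergence $\sup_{t\leq T}|D_\mu(t)x|_H\to 0$ as $\mu\to 0$ for every fixed $x\in H^\beta$.

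To upgrade this pointwise convergence to uniform convergence on $\mathcal{B}_R:=\{x\in H^\beta:|x|_{H^\beta}\leq R\}$, I would invoke compactness. Since $\alpha_k\to\infty$, the embedding $H^\beta\hookrightarrow H$ is compact, so $\mathcal{B}_R$ is precompact in $H$. Given $\epsilon>0$, pick a finite $H$-net $\{x_1,\dots,x_N\}\subset\mathcal{B}_R$ such that every $x\in\mathcal{B}_R$ satisfies $|x-x_j|_H<\epsilon/4$ for some index $j$. For $\mu$ sufficiently small the pointwise convergence yields $\sup_{t\leq T}|D_\mu(t)x_j|_H<\epsilon/2$ for every $j=1,\dots,N$, and combining with the uniform operator bound we obtain
\[
\sup_{t\leq T}|D_\mu(t)x|_H\leq \sup_{t\leq T}|D_\mu(t)x_j|_H + 2|x-x_j|_H<\epsilon,
\]
uniformly over $x\in\mathcal{B}_R$, as required.

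The main obstacle is extracting from the results of \cite{cs} the pointwise, uniform-in-$t$, convergence of $D_\mu(t)x$ for $x\in H^\beta$; once this ingredient is in place, the upgrade to uniformity on $H^\beta$-balls via compactness and the energy-based operator bound is routine. A secondary point to check is that the operator bound \eqref{energy-est-1} with $\beta=0$ is indeed well defined in the scale $\mathcal{H}_\beta$ used in the statement, but this follows directly from that energy identity and the contractivity of $S(t)$ on $H$.
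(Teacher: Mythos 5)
Your argument is correct and reaches the same conclusion, but it uses a genuinely different device to upgrade pointwise convergence to uniformity on $H^\beta$-balls. The paper's proof decomposes $x=P_nx+(I-P_n)x$ and uses a quantitative estimate from \cite{cs} of the form $|\Pi_1 S_\mu(t)(P_nx,0)-S(t)P_nx|_H\leq \mu\,c_T(n)|x|_H$, together with $\|P_n-I\|_{\mathcal{L}(H^\beta,H)}\to 0$; you instead rely only on the \emph{pointwise} convergence $\sup_{t\leq T}|D_\mu(t)x|_H\to 0$ for each fixed $x\in H^\beta$, combine it with the uniform operator bound $\|D_\mu(t)\|_{\mathcal{L}(H)}\leq 2$ extracted from \eqref{energy-est-1}, and then use compactness of the embedding $H^\beta\hookrightarrow H$ (true since $\alpha_k\to\infty$) to pass to a finite $H$-net in the ball. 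Both strategies ultimately exploit the same fact --- that $H^\beta$-balls are precompact in $H$ --- but yours isolates it as an explicit equicontinuity/net argument, which is arguably cleaner and demands slightly less from the cited results: you need only qualitative convergence at each point rather than the explicit $\mu\,c_T(n)$ rate on finite-dimensional ranges. The one thing to be careful about, which you already flagged, is confirming that the pointwise convergence for initial data in $H^\beta$ is indeed what \cite{cs} delivers; in fact, it is easily recovered by applying the paper's own projection argument to a single fixed $x$ (pick $n$ with $|x-P_nx|_H$ small, then $\mu$ small), so there is no real gap.
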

  
 \begin{proof}
If we denote by $P_n$ the projection of $H$ onto span$\{e_1,\ldots,e_n\}$, we have
\[\begin{array}{l}
\ds{ |\Pi_1 S_\mu(t)(x,y) - S(t) x|_{H}\leq  |\Pi_1 S_\mu(t)(0,y)|_H + |\Pi_1 S_\mu(t)(P_n x,0) - S(t) P_n x|_{H}}\\
\vs
\ds{+ |\Pi_1 S_\mu(t)(x-P_n  x,0) |_{H}+ |S(t) (x-P_nx)|_{H}=:I^\mu_1(t)+ I^\mu_{2,n}(t)+I_{3,n}^\mu(t)+I_{4,n}(t).}
\end{array}\] 
As a  consequence of \eqref{s2}, for every $t\geq 0$ and $\mu>0$ we have
\begin{equation}
\label{ghc501}
I_1^\mu(t)\leq 2\,\mu\,|y|_{H}.
\end{equation}
Moreover, as shown in \cite[proof of Theorem 3.5]{cs}, for every $n \in\,\mathbb{N}$ there exists some $c_T(n)$ such that for every $t \in\,[0,T]$ and $\mu>0$
\begin{equation}
\label{ghc502}
I^\mu_{2,n}(t)\leq \mu\,c_T(n)\,|x|_H.
\end{equation}
Moreover,  as a consequence of \eqref{energy-est-1}, for every $n \in\,\mathbb{N}$, $t \in\,[0,T]$ and $\mu>0$  we have
\[I_{3,n}^\mu(t)+I_{4,n}(t)\leq c_T\,|P_n x-x|_H\leq c_T\,\|P_n-I\|_{\mathcal{L}(H^\beta,H)}|x|_{H^\beta}.
\]
In particular, if we fix $R>0$ and $\e>0$, we can find $\bar{n}=n(\e,R,T)$ such that
\[|x|_{H^\beta}\leq R\Longrightarrow I_{3,\bar{n}}^\mu(t)+I_{4,\bar{n}}(t)\leq \e,\ \ \ \ \mu >0.\]
Thanks to \eqref{ghc501} and \eqref{ghc502}, this implies that
\[|x|_{H^\beta}+ \sqrt{\mu}\,|y|_{H}\leq R \Longrightarrow \sup_{t \in\,[0,T]}\, |\Pi_1 S_\mu(t)(x,y) - S(t) x|_{H}\leq 2\,\sqrt{\mu}\,R+\mu\,c_T(\bar{n})R+\e,\]
and, due to the arbitrariness of $\e$,  \eqref{Smu-to-Se-at-a-point} follows.

 \end{proof}

\begin{Lemma}

Let $T>0$ and $\beta>0$ be fixed. Then, for any $R>0$
  \begin{equation} \label{Smu-to-Se-2}
\lim_{\mu\to 0}\,\sup_{|\psi|_{L^1(0,T;H^\beta)}\leq R}\ \sup_{t \in\,[0,T]}\ \left|\frac{1}{\mu} \int_0^t \Pi_1 S_\mu(t-s) (0,\psi(s)) ds - \int_0^t S(t-s)  \psi(s) ds \right|_{H} =0.
  \end{equation}
\end{Lemma}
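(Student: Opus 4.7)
The plan is to recast \eqref{Smu-to-Se-2} as a convolution with operator-valued kernels and reduce the statement to an integrated operator-norm convergence. For $r\in[0,T]$ define
\[K_\mu(r)\,y := \frac{1}{\mu}\,\Pi_1 S_\mu(r)(0,y),\qquad K(r)\,y := S(r)\,y,\]
regarded as bounded operators from $H^\beta$ into $H$. Then the expression in \eqref{Smu-to-Se-2} equals $D_\mu\psi(t):=\int_0^t(K_\mu(t-s)-K(t-s))\psi(s)\,ds$.

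First I would establish the \emph{uniform} bound $\|K_\mu(r)-K(r)\|_{\mathcal{L}(H^\beta,H)}\le M$ for all $\mu>0$ and $r\in[0,T]$. This follows immediately from estimate \eqref{s2}, which gives $|K_\mu(r)y|_H\le 2|y|_H$, combined with the continuous embedding $H^\beta\hookrightarrow H$ (which uses $\beta>0$) and the standard contraction bound on $S(r)$.

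The heart of the argument is the pointwise-in-$r$ operator-norm convergence $\|K_\mu(r)-K(r)\|_{\mathcal{L}(H^\beta,H)}\to 0$ as $\mu\to 0$ for each fixed $r\in(0,T]$. To prove it, I would fix an intermediate time $\tau\in(0,r)$ and decompose via the semigroup property $S_\mu(r)(0,y) = S_\mu(r-\tau)[S_\mu(\tau)(0,y)]$. The energy identity \eqref{energy-est-1} (at exponent $\beta$) gives
\[\left|u_\mu(\tau)\right|_{H^\beta}^2 + \mu\left|v_\mu(\tau)\right|_{H^{\beta-1}}^2\le c\,\mu\,|y|_{H^\beta}^2\]
for $(u_\mu(\tau),v_\mu(\tau)):=S_\mu(\tau)(0,y)$. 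Consequently $(u_\mu(\tau),v_\mu(\tau))$ lies in the set of data to which Lemma \ref{l4} applies (with radius shrinking like $\sqrt\mu$), so that $|\Pi_1 S_\mu(r-\tau)(u_\mu(\tau),v_\mu(\tau))-S(r-\tau)u_\mu(\tau)|_H\to 0$ uniformly over $|y|_{H^\beta}\le R'$. Independently, a mode-by-mode analysis of the scalar damped oscillator ODE $\mu\ddot g+\dot g+\alpha_k g = 0$, $g(0)=0$, $\dot g(0)=1$ shows $g_k^\mu(\tau)/\mu\to e^{-\alpha_k\tau}$ for each $k$ as $\mu\to 0$; the uniform bound of Step 1 combined with the compact embedding $H^\beta\hookrightarrow H$ (via an Arzelà--Ascoli-type argument) lifts this pointwise-in-$y$ statement to $\mu^{-1}u_\mu(\tau)\to S(\tau)y$ in $H$, uniformly over $|y|_{H^\beta}\le R'$. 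Together with the strong continuity of $S(r-\tau)$, these two uniform convergences yield the desired pointwise operator-norm convergence.

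With the uniform bound and the pointwise convergence in hand, dominated convergence yields $\int_0^T\|K_\mu(r)-K(r)\|_{\mathcal{L}(H^\beta,H)}\,dr\to 0$ as $\mu\to 0$, and the claim \eqref{Smu-to-Se-2} then follows from a Young-type convolution estimate that bounds $\sup_{t\in[0,T]}|D_\mu\psi(t)|_H$ by this vanishing $L^1$-in-time operator norm times the prescribed norm of $\psi$. The principal obstacle is the core step above: upgrading the pointwise-in-$y$ convergence (obtained from Lemma \ref{l4} together with the mode-wise Green's function analysis) to operator-norm convergence, which rests crucially on the Rellich compactness of $H^\beta\hookrightarrow H$ for $\beta>0$ and on the uniform $H$-equicontinuity of the operator family $\{K_\mu(r)-K(r)\}_\mu$ coming from Step 1.
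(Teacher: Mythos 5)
Your plan to recast the claim as a convolution with the kernel $\Phi_\mu(r)=K_\mu(r)-K(r)$, establish uniform boundedness plus pointwise\,(in $r$) operator\hbox{-}norm convergence, and then pass to the limit is a genuinely different architecture from the paper's, but two of the key steps do not close as stated.

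First, the intermediate-time splitting with Lemma \ref{l4} is flawed. Writing
\[
K_\mu(r)y - K(r)y = \frac{1}{\mu}\Bigl[\Pi_1 S_\mu(r-\tau)(u_\mu(\tau),v_\mu(\tau)) - S(r-\tau)u_\mu(\tau)\Bigr] + S(r-\tau)\Bigl[\mu^{-1}u_\mu(\tau) - S(\tau)y\Bigr],
\]
Lemma \ref{l4} controls the \emph{un}-rescaled first bracket, but its conclusion is only $o(1)$ as $\mu\to0$ (the projection tail $\|P_n-I\|$ in its proof is $\mu$-independent); after multiplying by $\mu^{-1}$ nothing remains. Even crediting the shrinking radius $R'\sim\sqrt\mu$ and tracking the constants in Lemma \ref{l4} linearly, the $I_{3,n}$-type term becomes $O(\sqrt\mu\,\|P_n-I\|)$, so after division by $\mu$ it is $O(\mu^{-1/2}\|P_n-I\|)$, which blows up for fixed $n$. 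Fortunately this detour is also unnecessary: since $\mu^{-1}u_\mu(r)=\mu^{-1}\Pi_1 S_\mu(r)(0,y)=K_\mu(r)y$, the second half of your argument — the mode-wise ODE limit $g_k^\mu(r)/\mu\to e^{-\alpha_k r}$ lifted to operator norm through Rellich compactness and uniform boundedness — \emph{already} gives $\|K_\mu(r)-K(r)\|_{\mathcal{L}(H^\beta,H)}\to0$ for each $r>0$; no appeal to Lemma \ref{l4} is needed.

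Second, the concluding Young-type estimate does not give a $\sup_t$ bound from $L^1$ control of both factors. You would need
\[
\sup_{t\le T}|D_\mu\psi(t)|_H \le \int_0^T\|K_\mu(r)-K(r)\|_{\mathcal{L}(H^\beta,H)}\,dr\ \cdot\ |\psi|_{L^1(0,T;H^\beta)},
\]
but $L^1 * L^1 \hookrightarrow L^1$, not $L^\infty$. To control $\sup_t$ of the convolution with only $L^1$ control of $\psi$ one needs $L^\infty$-in-$r$ decay of the kernel, and that fails: there is a boundary layer of width $O(\mu)$ at $r=0$ (indeed $\Phi_\mu(0)=-I$, norm one uniformly in $\mu$), so $\sup_r\|\Phi_\mu(r)\|$ does not vanish. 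Dominated convergence of the $L^1_r$ operator norm is therefore not enough to conclude. The paper sidesteps both issues by truncating $\psi$ in space modes \emph{and} amplitude, $\psi_n(s)=\mathbb{I}_{\{|\psi(s)|_H\le n\}}P_n\psi(s)$, and then controlling the finite-rank part quantitatively through the estimate $|\Phi_\mu(t)P_ny|_H\le c_T(n)(e^{-t/\mu}+\mu)|y|_H$ from \cite[proof of Theorem 3.5]{cs}, whose time integral supplies the needed factor of $\mu$; the tail is handled by the uniform bound on $\Phi_\mu$ combined with $\|P_n-I\|_{\mathcal{L}(H^\beta,H)}\to 0$. In other words, the paper builds the correct $\mu$-rate into the finite-mode estimate rather than invoking a qualitative convolution bound.
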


\begin{proof}
For every $\mu>0$ and $t\geq 0$, we define
\begin{equation}
\label{ghc534}
\Phi_\mu(t)y=\frac 1\mu \Pi_1S_\mu(t)(0,y)-S(t)y,\ \ \ \ y \in\,H.
\end{equation}
Then,   for every $n \in\,\mathbb{N}$ we have
\[\begin{array}{l}
\ds{\frac{1}{\mu} \int_0^t \Pi_1 S_\mu(t-s) (0,\psi(s)) ds - \int_0^t S(t-s)  \psi(s) ds }\\
\vs
\ds{=\int_0^t \Phi_\mu(t-s)\le[\psi(s)-\psi_n(s)\r]\,ds+\int_0^t \Phi_\mu(t-s)\psi_n(s)\,ds,}
\end{array}\]
where
\[\psi_n(s):=I_{\{|\psi(s)|_H\leq n\}}\,P_n\psi(s), \ \ \ \ s\geq 0.\]
As shown in \cite[proof of Theorem 3.5]{cs}, for every $n \in\,\mathbb{N}$, there exists $c_T(n)$ such that for every $t \in\,[0,T]$
\begin{equation}
\label{ghc505}
|\Phi_\mu(t)P_ny|_H\leq c_T(n)\le(e^{-\frac t\mu}+\mu\r)|y|_H.
\end{equation}
Moreover, due to \eqref{s2}, for every $\beta \in\,[0,1]$, we have
\begin{equation}
\label{ghc512}
\|\Phi_\mu(t)\|_{\mathcal{L}(H^\d)}\leq M_{\beta,T},\ \ \ \ t \in\,[0,T].
\end{equation}
Hence 
\[\begin{array}{l}
\ds{\left|\frac{1}{\mu} \int_0^t \Pi_1 S_\mu(t-s) (0,\psi(s)) ds - \int_0^t S(t-s)  \psi(s) ds \right|_{H} }\\
\vs
\ds{\leq c\,M_{0,T}\int_0^t|\psi(s)-\psi_n(s)|_H\,ds+c_{T}(n)\int_0^t\le(e^{-\frac{t-s}{\mu}}+\mu\r)I_{\{|\psi(s)|_H\leq n\}}|\psi(s)|_H\,ds}\\
\vs
\ds{\leq c\,M_{0,T}\,\|P_n-I\|_{\mathcal{L}(H^\beta,H)}\,|\psi|_{L^1(0,T;H^\beta)}+n\,c_{T}(n)\le(1+T\r) \mu.}
\end{array}\]
Thus, if we  fix $\e>0$ and pick $\bar{n}=n(T,R,\e) \in\,\mathbb{N}$ such that
\[c\,M_{0,T}\,\|P_{\bar{n}}-I\|_{\mathcal{L}(H^\beta,H)}\,R\leq \e,\]
we get
\[\begin{array}{l}
\ds{\limsup_{\mu\to 0}\,\sup_{|\psi|_{L^1(0,T;H^\beta)}\leq R}\,\left|\frac{1}{\mu} \int_0^t \Pi_1 S_\mu(t-s) (0,\psi(s)) ds - \int_0^t S(t-s)  \psi(s) ds \right|_{H}}\\
\vs
\ds{\leq \e+\lim_{\mu\to 0}\bar{n}\,c_{T}(\bar{n})\le(1+T\r) \mu=\e.}
\end{array}\]
As $\e$ is arbitrary,  this yields \eqref{Smu-to-Se-2}.

\end{proof}

\section{Preliminary results on equations \eqref{eq1} and \eqref{eq1-par}}
\label{sec3}

If we denote $z(t)=(u(t),\partial_t u(t))$ and 
\[w(t,\xi)=\sum_{k=1}^\infty e_k(\xi)\,\beta_k(t),\]
with the notation that we have just introduced in Section \ref{sec2}, equation \eqref{eq1} can be rewritten as the following abstract stochastic evolution equation in $\mathcal{H}_\beta$
\begin{equation}
\label{eq1-abstr}
dz_\mu(t)=\le[A_\mu z_\mu(t)+B_\mu(z_\mu(t))\r]\,dt+Q_\mu dw(t),\ \ \ \ z_\mu(0)=(x,y) \in\,\H_\beta.
\end{equation}
In case $\la=1$, if
\[\sum_{k=1}^\infty \frac{\la_k^2}{\a_k^{1-\d}}<\infty,\]
for some $\d \in\,[0,1]$, then for every  $\beta<\d$ and any random initial condition $\gamma=(\theta,\eta) \in\,L^p(\Omega;\mathcal{H}_\beta)$ equation \eqref{eq1-abstr} admits a unique mild solution $z_\mu^\gamma=(u_\mu^\gamma,\partial_t u_\mu^\gamma) \in\,L^p(\Omega;C([0,T];\mathcal{H}_\beta))$, for every $p\geq 1$, $T>0$ and $\mu>0$. This means that
\begin{equation}
\label{fine335}
z_\mu^\gamma(t)=S_\mu(t) \gamma+\int_0^tS_\mu(t-s) B_\mu(z_\mu^\gamma(s))\,ds+\Gamma_\mu(t),\end{equation}
where
\begin{equation}
\label{ghc3}
\Gamma_\mu(t):=\int_0^tS_\mu(t-s) Q_\mu dw(s),
\end{equation}
(for a proof see e.g. \cite[Theorem 7.2]{dpz1}).

In \cite{bdp} and \cite{chow}   it is proven that in case $\la \in\,(1,3]$, for every $\mu>0$ and $\gamma=(\theta,\eta) \in\,L^p(\Omega;\mathcal{H}_1)$ there exists a unique mild solution $z_\mu^\gamma=(u_\mu^\gamma,\partial_t u_\mu^\gamma) \in\,L^p(\Omega;C([0,T];\mathcal{H}_1)$, for every $p\geq 1$ and $T>0$.

In what follows, we shall denote by $P^\mu_t$ the transition semigroup associated with equation \eqref{eq1-abstr} in the space $\mathcal{H}_1$. Namely, 
\[P^\mu_t\varphi(z)=\mathbb{E}\,\varphi(z^z_\mu(t)),\ \ \ \ t\geq 0,\ \ \ z\in\,\mathcal{H}_1,\] 
for every Borel bounded $\varphi:\mathcal{H}_1\to \mathbb{R}$.
In \cite{bdp} and \cite{chow}, the long time behavior of equation \eqref{eq1-abstr} has been studied and it has been proven that,  under Hypotheses \ref{H1} and  \ref{H3}, for every fixed $\mu>0$ the semigroup $P^\mu_t$ admits an invariant measure $\nu_\mu$.
In both papers conditions for uniqueness of the invariant measure are also given.

\medskip

Next, we consider equation \eqref{eq1-par}.
If Hypotheses \ref{H1} and  \ref{H3} hold, then for any $T>0$, $p\geq 1$ and $\theta \in\,L^p(\Omega;H)$,  equation \eqref{eq1-par} admits a unique mild solution $u ^\theta \in\,L^p(\Omega;C([0,T];H))$. This means that if $S(t)$ denotes the semigroup generated by the operator $A$ in $H$, then
\[u^\theta (t)=S(t)\theta +\int_0^t S(t-s)B(u^\theta(s))\,ds+\Gamma(t),\]
where
\begin{equation}
\label{ghc4}
\Gamma(t):=\int_0^t S(t-s)dw^Q(s).
\end{equation}
Moreover, it can be shown that
\begin{equation}
\label{ghc533}
\mathbb{E}\,\sup_{t \in\,[0,T]}|u^\theta(t)|^2_{H}\leq c_{T}\le(1+\mathbb{E}\,|\theta|^2_{H}\r).\end{equation}
(for a proof, see e.g. \cite{dpz1}).

If $\la=1$, then it is possible to show that in fact for every $\beta \in\,[0,1]$ and  $\theta \in\,L^p(\Omega;H^\beta)$ equation \eqref{eq1-par} is well posed in $L^p(\Omega;C([0,T];H^\beta))$ and
\begin{equation}
\label{ghc520}
\mathbb{E}\,\sup_{t \in\,[0,T]}|u^\theta(t)|^p_{H^\beta}\leq c_{\beta,T}\le(1+\mathbb{E}\,|\theta|^p_{H^\beta}\r).
\end{equation}
In case $\la \in\,(1,3]$,  if $\theta \in\,L^p(\Omega;H^1)$, then
\begin{equation}
\label{ghc521}
\mathbb{E}\,\sup_{t \in\,[0,T]}|u^\theta(t)|^p_{H^1}\leq c_{T}\le(1+\mathbb{E}\,|\theta|^p_{H^1}\r)
\end{equation}
(for a proof, see e.g. \cite{dpz1}).

Now, we denote by $P_t$ the transition semigroup associated with equation \eqref{eq1-par} in $H$,
\[P_t\varphi(x)=\mathbb{E} \varphi(u^x(t)),\ \ \ \ \ t\geq 0,\ \ \ \ x \in\,H,\]
for every Borel bounded $\varphi:H\to \mathbb{R}$. It is possible to prove that under Hypotheses \ref{H1} and  \ref{H3}, the semigroup $P_t$ admits an invariant measure $\nu$ (for a proof in the case $\la  \in\,(1,3]$, see \cite[Proposition 8.2.2]{libro}).

\begin{Lemma}
Assume Hypotheses \ref{H1} and  \ref{H3}. Then, for every $M, \e>0$ there exists $t_\star=t_\star(M, \e)>0$ such that
\begin{equation}
\label{ghc400}
\inf_{|x|_H\leq M }\,\mathbb{P}\le(|u^x(t)|_H<\e\r)>0,\ \ \ \ t\geq t_\star.
\end{equation}
Moreover, in the case $\la \in(1,3]$ we have
\begin{equation}
\label{ghc400bis}
\inf_{x \in\,H }\,\mathbb{P}\le(|u^x(t)|_H<\e\r)>0,\ \ \ \ t\geq t_\star,
\end{equation}
for some $t_\star=t_\star(\e)>0$.

\end{Lemma}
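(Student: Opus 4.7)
The plan is to combine a pathwise dissipative energy estimate for the process $v^x(t):=u^x(t)-\Gamma(t)$, where $\Gamma$ is the stochastic convolution \eqref{ghc4}, with the small-ball property of centered Gaussian measures, and then to propagate the resulting short-time lower bound to all $t\geq t_\star$ via the Markov property. Here $v^x$ solves the pathwise PDE $\partial_t v^x=Av^x+B(v^x+\Gamma)$ with $v^x(0)=x$, and $\Gamma\in C([0,T];H)$ a.s.\ by Hypothesis \ref{H1}(1).

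First I would derive a dissipative bound for $|v^x|_H$. In the Lipschitz case, testing against $v^x$ in $H$ and using $\langle Az,z\rangle_H\leq -\alpha_1|z|_H^2$, the Lipschitz continuity of $B$, the strict inequality $L_b<\alpha_1$ from \eqref{sk3-tris}, and Young's inequality, one gets
\[
\frac{d}{dt}|v^x|_H^2\leq -\delta\,|v^x|_H^2+C_1\bigl(1+|\Gamma|_H^2\bigr)
\]
for some $\delta>0$, and Gronwall yields $|v^x(t)|_H\leq e^{-\delta t/2}|x|_H+C_2(1+\sup_{s\leq t}|\Gamma(s)|_H)$, so that the deterministic part becomes arbitrarily small for $t$ large, uniformly in $|x|_H\leq M$. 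In the polynomial case (where $D=[0,L]$), using instead the superlinear dissipation \eqref{ghc401} applied to $(\sigma,\rho)=(v^x,\Gamma)$, the embedding $H^1\hookrightarrow L^{\lambda+1}$, and the bound $|z|_H\leq c|z|_{L^{\lambda+1}}$ on a bounded interval, one obtains
\[
\frac{d}{dt}|v^x|_H^2\leq -c\,|v^x|_H^{\lambda+1}+C_3\bigl(1+|\Gamma|_{L^{\lambda+1}}^{\lambda+1}\bigr);
\]
comparison with the scalar ODE $y^\prime=-cy^{(\lambda+1)/2}$, whose solutions from arbitrary positive data decay as $t^{-2/(\lambda-1)}$, yields a bound on $|v^x(t)|_H$ that can be made arbitrarily small for $t$ large, uniformly in $x\in H$.

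Second, since $\Gamma$ is a centered Gaussian random element of the separable Banach space $C([0,T_0];H)$, the support of its law contains $0$ (being the closure of its Cameron--Martin subspace), giving
\[
\mathbb{P}\Bigl(\sup_{s\in[0,T_0]}|\Gamma(s)|_H<\eta\Bigr)>0\qquad\forall\,T_0,\eta>0.
\]
Fixing $M,\epsilon>0$, I would pick $T_0=T_0(M,\epsilon)$ so that the deterministic term in Step 1 is less than $\epsilon/3$, then $\eta$ so that the noise-dependent contribution plus $|\Gamma(T_0)|_H$ is less than $2\epsilon/3$. On the resulting positive-probability small-ball event, $|u^x(T_0)|_H\leq|v^x(T_0)|_H+|\Gamma(T_0)|_H<\epsilon$, which establishes the claim at $t=T_0$.

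The main obstacle I expect is the uniformity in $t$, since the small-ball probability $\mathbb{P}(\sup_{s\leq t}|\Gamma(s)|_H<\eta)$ decays with $t$. I would resolve this via the Markov property: set $t_\star:=T_0$; for any $t>T_0$, condition at time $t-T_0$. The same dissipative argument also produces the uniform moment bound $\sup_{s\geq 0}\mathbb{E}|u^x(s)|_H^2\leq C(1+|x|_H^2)$ (cf.~\eqref{ghc533}), so Chebyshev gives $\mathbb{P}(|u^x(t-T_0)|_H\leq M^\prime)\geq 1/2$ for some $M^\prime=M^\prime(M)$ independent of $t$, and reapplying the small-ball argument on $[t-T_0,t]$ with initial datum $u^x(t-T_0)$ in $\{|y|_H\leq M^\prime\}$ gives a uniform-in-$t$ lower bound, completing the proof of \eqref{ghc400}--\eqref{ghc400bis}.
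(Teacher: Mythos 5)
Your proposal follows essentially the same route as the paper: decompose $u^x=v^x+\Gamma$ with $\Gamma$ the stochastic convolution, obtain a pathwise dissipative energy bound for $v^x$ (Gronwall in the Lipschitz case; ODE comparison with $y'=-cy^{(\lambda+1)/2}+\cdots$ in the polynomial case, which is exactly the \cite[Lemma 1.2.6]{libro} comparison invoked in the paper and yields the $x$-uniform decay needed for \eqref{ghc400bis}), and then invoke the full support of the Gaussian law of $\Gamma$ in $C([0,t];L^{\lambda+1}(D))$ to give the small-ball event positive probability. One remark: your final Markov-property step, introduced to get a lower bound uniform over all $t\geq t_\star$, is not present in the paper and is not actually required by the statement — \eqref{ghc400} and \eqref{ghc400bis} only assert that for each fixed $t\geq t_\star$ the infimum over $x$ is positive, and the small-ball probability $\mathbb{P}(\sup_{s\leq t}|\Gamma(s)|<\eta)$ is positive for every fixed $t$ even though it is not bounded below uniformly in $t$. (Also, for \eqref{ghc400bis} the Chebyshev bound you quote, $\mathbb{E}|u^x(s)|_H^2\leq C(1+|x|_H^2)$, depends on $|x|_H$ and so would not by itself give an $x$-uniform $M'$; you would instead use the $x$-uniform decay coming from the superlinear dissipation. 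Since the Markov step is dispensable, this does not affect the result.)
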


\begin{proof}
We give here a proof for \eqref{ghc400bis}, that holds when $\la \in\,(1,3]$. The proof of \eqref{ghc400}  is left to the reader.
We define $v^x(t)=u^x(t)-\Gamma(t)$, where $\Gamma(t)$ is the stochastic convolution defined in \eqref{ghc4}. We have that $v^x$ solves the random equation
\[\frac{d}{dt} v^x(t)=A v(t)+B(v^x(t)+\Gamma(t)),\ \ \ \ \ v^x(0)=x.\]
Due to \eqref{ghc401}, we have
\[\begin{array}{l}
\ds{\frac 12 \frac d{dt}|v^x(t)|_H^2=\langle Av^x(t),v^x(t)\rangle_H+\langle B(v^x(t)+\Gamma(t)),v^x(t)\rangle_H}\\
\vs
\ds{\leq -\kappa_1 |v^x(t)|_{L^{\la+1}}^{\la+1}+\kappa_2\le(1+|\Gamma(t)|_{L^{\la+1}}^{\la+1}\r)\leq -\kappa_1\,|D|^{\frac{\la-1}2}\, |v^x(t)|_{H}^{\la+1}+\kappa_2\le(1+|\Gamma(t)|_{L^{\la+1}}^{\la+1}\r).}
\end{array}\]
Since $\la+1>2$, by a comparison argument (see \cite[Lemma 1.2.6]{libro}) the inequality above yields
\[\sup_{x \in\,H}|v^x(t)|_H\leq c\,\max\le(\sup_{s \in\,[0,t]}|\Gamma(s)|^{\frac 1{\la+1}}_{L^{\la+1}},\ t^{-\frac {\la-1}4}\r),\ \ \ \ t>0,\]
for some constant $c>0$ depending  on $\la$.
This implies that there exists $t_\star=t_\star(\e)>0$ such that
\[\begin{array}{l}
\ds{\inf_{x \in\,H}\mathbb{P}\le(|v^x(t)|_H<\e/2\r)
\geq \mathbb{P}
\le(\sup_{s \in\,[0,t]}|\Gamma(s)|_{L^{\la+1}}<\le(\frac \e{2c}\r)^{\la+1}\r),\ \ \ \ t\geq t_\star.}
\end{array}\]
Since $|u^x(t)|_H\leq |v^x(t)|_H+|\Gamma(t)|_H\leq |v^x(t)|_H+|D|^{\frac{\la-1}{2(\la+1)}}\,|\Gamma(t)|_{L^{\la+1}}$ and we can assume $\e\leq 1$, from the inequality above we obtain
\[\begin{array}{l}
\ds{\inf_{x \in\,H}\mathbb{P}\le(|u^x(t)|_H<\e\r)
\geq \mathbb{P}
\le(\sup_{s \in\,[0,t]}|\Gamma(s)|_{L^{\la+1}}<c\,\e^{\la+1}\r),\ \ \ \ t\geq t_\star,}
\end{array}\]
for some constant $c>0$. Since the law of $\Gamma(\cdot)$ is full in $C([0,t];L^{\la+1}(D))$, this implies \eqref{ghc400bis}. 

\end{proof}

\medskip

In \cite{libro} the {\em strong Feller property} of the semigroup $P_t$ is studied  and it proven that, under suitable non-degeneracy conditions on the covariance $Q^2$, 
for every $\varphi \in\,B_b(H)$ and $t>0$ we have $P_t\varphi \in\,C^1_b(H)$, and
\[\sup_{x \in\,H}|D(P_t\varphi)(x)|_H\leq c\,(t\wedge 1)^{-\rho}\,\sup_{x \in\,H}|\varphi(x)|,\]
for some constant $\rho>0$ (see \cite[Proposition 4.4.3 and Theorem 7.3.1]{libro}). If, as in the present paper, we do not want to assume those restrictive assumptions  on the non-degeneracy of the noise, the strong Feller property does not hold anymore and it has to be replaced by the so-called {\em asymptotic strong Feller property}.  As a matter of fact, thanks to condition \eqref{ghc411}, it is known that for every $\varphi \in\,C_b^1(H)$ and $t\geq 0$
\begin{equation}
\label{ghc410}
|D(P_t\varphi)(x)|_H\leq c\,\sqrt{P_t|\varphi|^2(x)}+\a(t)\sqrt{P_t|D\varphi|^2_H(x)},\ \ \ \ x \in\,H,
\end{equation}
for some function $\a(t)$ such that $\a(t)\to 0$, as $t\to\infty$. In Appendix \ref{appa} we give a proof of this bound for the reader's convenience.

Moreover, in \cite{libro} it is proven that the solution $u^x(t)$ of equation \eqref{eq1-par} is differentiable with respect to the initial condition $x \in\,H$ and if $D_x u^x(t)h$ denotes its derivative along the direction $h \in\,H$, it holds
\begin{equation}
\label{ghc412}
\sup_{x \in\,H}|D_x u^x(t)h|_H\leq c(t)\,|h|_H,\ \ \ \ \mathbb{P}-\text{a.s.}
\end{equation}
for some continuous increasing function $c(t)$.

\section{The main result}
\label{sec5}
In the previous section, we have seen  that there exists a sequence of probability measures $\{\nu_\mu\}_{\mu>0}\subset \mathcal{P}(\mathcal{H}_1)$ which are invariant for the semigroup $P^\mu_t$. The family  $\{\Pi_1\nu_\mu\}_{\mu>0}\subset \mathcal{P}(H^1)$ can be extended to the family $\{(\Pi_1\nu_\mu)^\prime\}_{\mu>0}\subset\mathcal{P}(H)$, by setting  every Borel set $A\subset H$
\[(\Pi_1\nu_\mu)^\prime(A)=\Pi_1\nu_\mu(A\cap H^1),\ \ \ \ \mu>0.\]

By using the results proven in  \cite{HM}, it is possible to show that  in the case we are considering  conditions \eqref{ghc400} and \eqref{ghc400bis} and the gradient estimate \eqref{ghc412}, together with \eqref{ghc410}, imply that there exist a distance $\a$ on $H$ and  some constant $\delta>0$ such that
\begin{equation}
\label{ghc413}
\rho_\a\le(P_t^\star \nu_1,P_t^\star \nu_2\r)\leq c\,e^{-\delta t}\rho_\a(\nu_1,\nu_2),\ \ \ \ t\geq 0,
\end{equation}
for every $\nu_1, \nu_2 \in\,\mathcal{P}r(H)$. Here
$P_t^\star \nu$ is the measure defined by 
\[\int_{H}\varphi(x)\,d P_t^\star \nu(x)=\int_{H}P_t \varphi(x)\,d\nu(x),\]
for every $\varphi \in\,C_b(H)$,
and
$\rho_\a$ is the Wasserstein distance in $\mathcal{P}(H)$ associated with $\a$, defined by
\[\rho_\a (\nu_1,\nu_2)=\inf_{[\varphi]_{\text{\tiny{Lip}}_{H,\a}\leq 1}}\le|\int_{H^1}\varphi(x)\,d\nu_1(x)-\int_{H^1}\varphi(x)\,d\nu_2(x)\r|,\]
where
\[[\varphi]_{\text{\tiny{Lip}}_{H,\a}}=\sup_{x\neq y}\frac{|\varphi(x)-\varphi(y)|}{\a(x,y)}.\]
Notice that it can be proven that
\begin{equation}
\label{ghc415}
\rho_\a(\nu_1,\nu_2)=\inf\,\mathbb{E}\,\a(X,Y),\end{equation}
over all random variables $X$ and $Y$ distributed as $\nu_1$ and $\nu_2$, respectively (for a proof see \cite{villani}).

\medskip

The choice of a distance $\a$ on $H$ which allows to obtain the contraction property \eqref{ghc413} strongly depends on the type of non-linearity $b$ we are dealing with. 

In the case the constant $\la$ in Hypothesis \ref{H3} belongs to $(1,3]$, the uniform  condition \eqref{ghc400bis} holds. By proceeding as in \cite[Lemma 4.1]{fghr} it is possible to show that \eqref{ghc400bis} implies that for every $\e>0$ there exists $t_\star=t_\star(\e)>0$ such that 
\[\inf_{x, y \in\,H}\sup_{\Gamma \in\,\mathcal{C}(P_t^\star \d_x, P_t^\star \d_y)}\Gamma\le((x^\prime,y^\prime) \in\,H\times H\ :\ |x^\prime-y^\prime|_H<\e\r)>0,\ \ \ \ t\geq t_\star,\]
where $\mathcal{C}(P_t^\star \d_x, P_t^\star \d_y)$ denotes the collection of all couplings of the measures $P_t^\star \d_x$ and $P_t^\star \d_y$.
This means that \cite[Assumption 3]{HM} is satisfied. Moreover, since we have \eqref{ghc410}, also \cite[Assumption 2]{HM} is satisfied.
Therefore, thanks to \cite[Theorem 2.5]{HM} we have that \eqref{ghc413} holds for 
\begin{equation}
\label{fine337}
\a(x,y)=|x-y|_H.\end{equation}

In the case $\la=1$, bound \eqref{ghc400}  is uniform only on bounded sets of $H$. Hence a Liapunov structure has to be taken into the picture and, as proven in \cite[Theorem 3.4]{HM}, condition \eqref{ghc413} is true with the following distance $\a$
\begin{equation}
\label{f2001}
\a(x,y)=\inf_{\gamma}\int_0^1 \exp\le(\eta\,|\gamma(t)|^2_H\r)\,|\gamma^\prime(t)|_H\,dt,\end{equation}
 where the infimum is taken over all paths $\gamma$ such that $\gamma(0)=x$ and $\gamma(1)=y$ and where
 $\eta$ is some positive constant, which is less than a value $\eta_0$ determined by $\a_1-L_b$. Notice that for every $x, y \in\,H$
\begin{equation}
\label{f2000}
|x-y|_H\leq \a(x,y)\leq \exp\le(\eta(|x|_H^2+|y|_H^2)\r)|x-y|_H
\end{equation}

By proceeding again as in \cite[Lemma 4.1]{fghr}, it is possible to show that \eqref{ghc400} implies that for every $M, \e>0$ there exists $t_\star=t_\star(M,\e)>0$ such that 
\[\inf_{x, y \in\,B_H(M)}\sup_{\Gamma \in\,\mathcal{C}(P_t^\star \d_x, P_t^\star \d_y)}\Gamma\le((x^\prime,y^\prime) \in\,H\times H\ :\ |x^\prime-y^\prime|_H<\e\r)>0,\ \ \ \ t\geq t_\star.\]
This means that \cite[Assumption 6]{HM} is verified. Moreover, if we define $U(t)=\eta\,|u(t)|_H^2$, from It\^o's formula we have
\[dU(t)=\eta\le[\mbox{Tr} Q^2-|u(t)|_{H^1}^2+\langle B(u(t)),u(t)\rangle_H\r]\,dt+2\eta\,|Qu(t)|_H\,d\beta(t),\]
for some Brownian motion $\beta(t)$. It is then immediate to check that we can apply \cite[Lemma 5.1]{HM}, so that \cite[Assumption 4]{HM} holds for $V(x)=\exp(\eta\,|x|_H^2)$.
Finally, due to \eqref{ghc410}, \cite[Assumption 5]{HM} holds. This means that we can apply \cite[Theorem 3.4]{HM} and conclude that \eqref{ghc413} holds for $\a$ defined as in \eqref{f2001}.

\medskip

In what follows, we shall prove that, if $\nu$ is the unique invariant measure of equation \eqref{eq1-par}, then the following result holds.
\begin{Theorem}
\label{main}
Under Hypotheses \ref{H1} and  \ref{H3}, we have
\begin{equation}
\label{ghc414}
\lim_{\mu\to 0}\rho_\a \le((\Pi_1\nu_\mu)^\prime,\nu\r)=0,
\end{equation}
where $\a$ is either \eqref{fine337} or \eqref{f2001}, in the case $\la \in\,(1,3]$ and $\la=1$, respectively.
\end{Theorem}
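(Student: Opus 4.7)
The plan is to execute the strategy already sketched in the introduction, making each step rigorous. First, using the contraction estimate \eqref{ghc413} together with the invariance of $\nu_\mu$ for $P^\mu_t$ and of $\nu$ for $P_t$, I would insert $P_{t_\star}^\star (\Pi_1 \nu_\mu)'$ and use the triangle inequality
\[
\rho_\a\bigl((\Pi_1\nu_\mu)',\nu\bigr) \leq \rho_\a\bigl((\Pi_1 (P_{t_\star}^\mu)^\star \nu_\mu)',\,P_{t_\star}^\star (\Pi_1\nu_\mu)'\bigr) + c\,e^{-\delta t_\star}\rho_\a\bigl((\Pi_1\nu_\mu)',\nu\bigr).
\]
Choosing $t_\star$ with $c\,e^{-\delta t_\star} < 1/2$ and absorbing the second term reduces the theorem to
\[
\lim_{\mu \to 0} \rho_\a\bigl((\Pi_1 (P_{t_\star}^\mu)^\star \nu_\mu)',\,P_{t_\star}^\star (\Pi_1\nu_\mu)'\bigr) = 0.
\]

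Second, by the Kantorovich dual characterization \eqref{ghc415} of $\rho_\a$, this quantity is bounded by $\mathbb{E}\,\a(u_\mu^{\gamma_\mu}(t_\star), u^{\theta_\mu}(t_\star))$, where $\gamma_\mu = (\theta_\mu, \eta_\mu)$ is distributed as $\nu_\mu$ and the two processes are driven by the same Wiener process $w$ (a natural synchronous coupling). So the whole theorem collapses to establishing \eqref{intro6}, namely
\[
\lim_{\mu \to 0}\mathbb{E}\,\a\bigl(u_\mu^{\gamma_\mu}(t_\star),\,u^{\theta_\mu}(t_\star)\bigr) = 0.
\]

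Third, to establish this generalized Smoluchowski--Kramers limit I would split cases according to the form of $\a$. When $\lambda \in (1,3]$ and $\a(x,y)=|x-y|_H$, it suffices to show that $u_\mu^{\gamma_\mu}(t_\star) \to u^{\theta_\mu}(t_\star)$ in $L^1(\Omega;H)$; when $\lambda = 1$ and $\a$ is the exponentially weighted distance \eqref{f2001}, the upper bound \eqref{f2000} reduces the problem to showing $L^1$-convergence in $H$ combined with uniform-in-$\mu$ control of $\mathbb{E}\exp(2\eta(|u_\mu^{\gamma_\mu}(t_\star)|_H^2 + |u^{\theta_\mu}(t_\star)|_H^2))$, via Cauchy--Schwarz. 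In both cases, writing the difference $u_\mu^{\gamma_\mu}-u^{\theta_\mu}$ through the mild formulations and using the uniform approximation Lemmas \ref{l3}--\ref{l4} and \eqref{Smu-to-Se-2} (which were already set up precisely to hold uniformly over initial data in bounded sets of $\mathcal{H}_\beta$) together with a Gronwall-type argument on sets of the form $\{|\gamma_\mu|_{\mathcal{H}_\beta} \leq R\}$, one gets the convergence on each such set, and exits $R \to \infty$ by uniform tightness.

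The main obstacle, as flagged by the authors themselves, is the last ingredient: producing a priori bounds uniform in $\mu$ for $u_\mu^{\gamma_\mu}$ and for moments of $\nu_\mu$, strong enough to let the cutoff $R \to \infty$. Concretely one needs uniform estimates of the form $\sup_{\mu>0}\int|\gamma|_{\mathcal{H}_1}^p\,d\nu_\mu(\gamma) < \infty$ (and, in the Lipschitz case, exponential moments $\sup_{\mu>0}\int\exp(\eta|x|_H^2)\,d(\Pi_1\nu_\mu)'(x) < \infty$). These come by applying It\^o's formula to the natural Lyapunov function for the damped wave system \eqref{eq1-abstr} — the one based on the energy $|x|_{H^1}^2 + \mu|y|_H^2 + 2\int \mathfrak{b}(\xi,x)\,d\xi$ augmented by the coupling term $\langle x, \mu y \rangle$ from the $A_\mu$ structure — carefully tracking that the $\mu$-dependence of the noise $Q_\mu$ and of $A_\mu$ cancels in the energy balance, using \eqref{energy-est-1}--\eqref{energy-est-2} and the dissipativity \eqref{sk3-bis} or \eqref{sk3-tris}. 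Integrating against $\nu_\mu$ and using stationarity yields the required uniform moment bounds. Once these are in hand, the approximation Lemmas in Section \ref{sec4} and a localization argument close the proof.
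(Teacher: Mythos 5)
Your reduction of the theorem to limit \eqref{ghc416} — invariance of $\nu_\mu$ and $\nu$, the triangle inequality, the contraction estimate \eqref{ghc413}, absorption of the $ce^{-\delta t_\star}$ term, and then Kantorovich duality \eqref{ghc415} with a synchronous coupling through the same Wiener process — is exactly the paper's proof of Theorem~\ref{main}, and the further sketch of how \eqref{ghc416} is proved (energy-type Lyapunov functional with the $\mathfrak{b}$ and cross terms to get uniform-in-$\mu$ moment and exponential-moment bounds for $\nu_\mu$, then the uniform approximation lemmas of Section~\ref{sec4} together with Gronwall and localization) matches the strategy carried out in Sections~\ref{sec6}--\ref{sec9}. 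Two technical details you gloss over: in the polynomial case ($\lambda\in(1,3]$) localizing on $\{|\gamma_\mu|_{\H_1}\le R\}$ is not enough since $B$ is only locally Lipschitz — the paper instead introduces the pathwise stopping time $\tau^\mu_R$ on the $L^\infty$-norm of both $u_\mu^{\gamma_\mu}$ and $u^{\theta_\mu}$ and needs the uniform-in-$\mu$ probability bound \eqref{ghc300} from Lemma~\ref{l5.2} to control $\mathbb{P}(\tau^\mu_R\le t)$; and in the Lipschitz case, passing from the weighted metric to $L^1$-convergence of $|u_\mu^{\gamma_\mu}(t)-u^{\theta_\mu}(t)|_H$ requires the extra $\epsilon$-shift of the exponent and the power $|x-y|_H^{1/2}$ as in the paper's computation, since a naive Cauchy--Schwarz with $\exp(2\eta\cdot)$ would demand $L^2$-convergence which is not what is established for the stochastic convolution term.
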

\begin{proof}
The method used here is analogous to the one used for example in \cite{fghr}.
Due to the invariance of $\nu_\mu$ and $\nu$, for every $t\geq 0$ we have
\[\begin{array}{l}
\ds{\rho_\a\le((\Pi_1\nu_\mu)^\prime,\nu\r)=\rho_\a\le((\Pi_1 (P_t^\mu)^\star \nu_\mu)^\prime,P_t^\star\nu\r)}\\
\vs
\ds{\leq 
\rho_\a\le((\Pi_1(P_t^\mu)^\star \nu_\mu)^\prime,P_t^\star(\Pi_1\nu_\mu)^\prime\r)+\rho_\a\le(P_t^\star(\Pi_1\nu_\mu)^\prime,P_t^\star\nu\r).}
\end{array}\]
Hence,
thanks to \eqref{ghc413}, we obtain
\[\rho_\a\le((\Pi_1\nu_\mu)^\prime,\nu\r)\leq \rho_\a\le((\Pi_1(P_t^\mu)^\star \nu_\mu)^\prime,P_t^\star(\Pi_1\nu_\mu)^\prime\r)+c\,e^{-\delta t}\rho_\a\le((\Pi_1\nu_\mu)^\prime,\nu\r).\]
This means that, if we pick $t_\star>0$ such that $c\,e^{-\delta t_\star}\leq 1/2$, we have
\[\rho_\a\le((\Pi_1\nu_\mu)^\prime,\nu\r)\leq 2\,\rho_\a\le((\Pi_1(P_t^\mu)^\star \nu_\mu)^\prime,P_t^\star(\Pi_1\nu_\mu)^\prime\r),\ \ \ \ t\geq t_\star.\]
In particular, \eqref{ghc414} follows once we are able to show that
\[\lim_{\mu\to 0}\rho_\a\le((\Pi_1(P_t^\mu)^\star \nu_\mu)^\prime,P_t^\star(\Pi_1\nu_\mu)^\prime\r)=0.
\]

Due to \eqref{ghc415}, we have that
\[\rho_\a\le((\Pi_1(P_t^\mu)^\star \nu_\mu)^\prime,P_t^\star(\Pi_1\nu_\mu)^\prime\r)\leq \mathbb{E}\,\a(u^{\gamma_\mu}(t),u^{\theta_\mu}(t)),\]
where $\gamma_\mu=(\theta_\mu,\eta_\mu)$ is any $\mathcal{H}_1$-valued random variable such that $\mathcal{L}(\gamma_\mu)=\nu_\mu$. Therefore, \eqref{ghc414} follows once we show that
\begin{equation}
\label{ghc416}
\lim_{\mu\to 0}\mathbb{E}\,\a(u^{\gamma_\mu}(t),u^{\theta_\mu}(t))=0.
\end{equation}

\end{proof}

The proof of limit \eqref{ghc416}, which concludes the proof of Theorem  \ref{main}, is given in the next section.

\section{Proof of limit \eqref{ghc416}}
\label{sec6}

In Section \ref{sec3}, we have seen that for every $\mu>0$ equation \eqref{eq1-abstr} admits an invariant measure. Let us denote by  $\{\nu_\mu\}_{\mu>0}$ any family of invariant measures for equation \eqref{eq1}.

\begin{Theorem}
\label{fund}
Assume Hypotheses \ref{H1} and  \ref{H3}. For every $\mu>0$, let $\gamma_\mu=(\theta_\mu,\eta_\mu)$ be a $\mathcal{F}_0$-measurable $\mathcal{H}_1$-valued random variable, distributed as the invariant measure $\nu_\mu$. Then, for every $T>0$ we have
\begin{equation}
\label{ghc500}
\lim_{\mu\to 0}\sup_{t \in\,[0,T]}\,\mathbb{E}\,\a(u^{\gamma_\mu}_\mu(t),u^{\theta_\mu}(t))=0.
\end{equation}
\end{Theorem}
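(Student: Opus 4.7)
The plan is to reduce \eqref{ghc500} to two ingredients: a uniform (in both $t\in [0,T]$ and $\mu$) $H$-convergence $\E|u_\mu^{\gamma_\mu}(t)-u^{\theta_\mu}(t)|_H^2\to 0$, together with, in the Lipschitz case $\la=1$, a uniform exponential moment bound. Indeed, when $\la\in(1,3]$ we have $\a(x,y)=|x-y|_H$, so $L^2$ convergence suffices. When $\la=1$, the bound \eqref{f2000} and Cauchy--Schwarz give
\[
\E\,\a(u_\mu^{\gamma_\mu}(t),u^{\theta_\mu}(t))\leq\bigl(\E\exp\!\bigl(2\eta(|u_\mu^{\gamma_\mu}(t)|_H^2+|u^{\theta_\mu}(t)|_H^2)\bigr)\bigr)^{1/2}\bigl(\E|u_\mu^{\gamma_\mu}(t)-u^{\theta_\mu}(t)|_H^2\bigr)^{1/2},
\]
so it is enough to prove, uniformly in $\mu$, an exponential moment for $|u_\mu^{\gamma_\mu}(t)|_H$ with $\eta$ small enough. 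A first, essential step is therefore to establish a-priori bounds, uniform in $\mu$, for $\int_{\H_1}(|x|_{H^1}^p+\mu^{p/2}|y|_H^p)\,d\nu_\mu(x,y)$, and in the case $\la=1$ for $\int_{\H_1}\exp(\eta(|x|_{H^1}^2+\mu|y|_H^2))\,d\nu_\mu$. These follow from an energy identity for \eqref{eq1}, the invariance of $\nu_\mu$, and the dissipation provided by Hypothesis \ref{H3} (together with \eqref{ghc420}, \eqref{sk2} and Hypothesis \ref{H1}.2 on the noise).

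Next, starting from the mild formulation \eqref{fine335}, I would split
\[
u_\mu^{\gamma_\mu}(t)-u^{\theta_\mu}(t)=\bigl[\Pi_1 S_\mu(t)\gamma_\mu-S(t)\theta_\mu\bigr]+\bigl[\Pi_1\Gamma_\mu(t)-\Gamma(t)\bigr]+R_\mu(t),
\]
where the nonlinear remainder is further decomposed as
\[
R_\mu(t)=\frac{1}{\mu}\int_0^t\Pi_1 S_\mu(t-s)\bigl(0,B(u_\mu^{\gamma_\mu}(s))-B(u^{\theta_\mu}(s))\bigr)\,ds+\biggl[\frac{1}{\mu}\int_0^t\Pi_1 S_\mu(t-s)(0,B(u^{\theta_\mu}(s)))\,ds-\int_0^t S(t-s)B(u^{\theta_\mu}(s))\,ds\biggr].
\]
Using the uniform tightness of $\{\nu_\mu\}$ in $\H_1$ coming from the a priori bounds, for each $\e>0$ I restrict to the event $\{|\theta_\mu|_{H^\beta}+\sqrt\mu\,|\eta_\mu|_H\leq R\}$ (with $0<\beta<\d$) whose complement has $\nu_\mu$-probability $\leq\e$ uniformly in $\mu$; on this event Lemma \ref{l4} gives the vanishing of $\Pi_1 S_\mu(t)\gamma_\mu-S(t)\theta_\mu$ in $H$, uniformly in $t\in[0,T]$. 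The stochastic convolution difference $\Pi_1\Gamma_\mu(t)-\Gamma(t)$ is handled coordinate by coordinate along $\{Qe_k\}_{k}$, combining \eqref{bound-1} with a dominated convergence argument and the pointwise convergence $\Phi_\mu(\cdot)e_k\to 0$ used in the proof of \eqref{Smu-to-Se-2}. The bracketed Riemann-type term in $R_\mu$ is treated by \eqref{Smu-to-Se-2} applied to $\psi(s)=B(u^{\theta_\mu}(s))$, whose $L^1(0,T;H^\beta)$ norm is controlled in probability through \eqref{ghc513}--\eqref{fine339} and the estimates \eqref{ghc520}--\eqref{ghc521}, combined with the uniform-in-$\mu$ moment bounds on $\theta_\mu$.

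Finally, the leading, self-coupling term of $R_\mu(t)$ is closed by Gronwall. In the case $\la=1$, the Lipschitz property of $B$ combined with the energy identities \eqref{energy-est-1}--\eqref{energy-est-2} (which provide $L^2$-in-time control of $\|\Pi_1 S_\mu(t-\cdot)(0,\cdot)\|$) yields a Gronwall inequality directly for $\E|u_\mu^{\gamma_\mu}(t)-u^{\theta_\mu}(t)|_H^2$, uniformly in $\mu$ and $t\in[0,T]$. In the polynomial case $\la\in(1,3]$, \eqref{fine334} gives only local Lipschitz continuity of $B_\mu$ on $\H_1$, so one must introduce a stopping time that keeps $|u_\mu^{\gamma_\mu}(s)|_{H^1}$ and $|u^{\theta_\mu}(s)|_{H^1}$ bounded; the uniform moment bounds on $\nu_\mu$ (in the $H^1$-component) together with \eqref{ghc521} control the probability that this stopping time occurs before $T$. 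The hardest step, and the one on which everything hinges, is the derivation of these uniform-in-$\mu$ a-priori bounds for \eqref{eq1}---particularly the exponential moments needed in the Lipschitz case and the $H^1$-moments needed in the polynomial case---where the factor $1/\mu$ in $B_\mu$ and the singular perturbation structure of $A_\mu$ make standard Lyapunov functional choices fail and force one to work with a carefully chosen $\mu$-dependent energy that respects the Hamiltonian structure of the wave equation.
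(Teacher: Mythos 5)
Your proposal is correct and follows essentially the same route as the paper: reduce \eqref{ghc500} to (i) uniform-in-$\mu$ exponential moments (via the Lyapunov functional $K_{\vartheta,\mu}$ and the Kolmogorov operator $N_\mu$, Sections \ref{sec8}--\ref{sec9}) and (ii) convergence of the $H$-difference, the latter obtained from the mild-solution decomposition, a truncation on $\{|\theta_\mu|_{H^\beta}+\sqrt{\mu}\,|\eta_\mu|_H\le R\}$ using Lemma \ref{l3} and \eqref{Smu-to-Se-2}, and a stopping time plus Gronwall in the polynomial case. The only cosmetic deviations are that you target $\E|u_\mu^{\gamma_\mu}(t)-u^{\theta_\mu}(t)|_H^2$ where the paper instead upgrades the metric estimate to $\a(x,y)\le c_\e\exp((\eta+\e)(|x|_H^2+|y|_H^2))|x-y|_H^{1/2}$ so that only first moments of the difference are needed, and you stop at an $H^1$-level set where the paper uses an $L^\infty$-level set (equivalent since $d=1$ in that case).
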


We will prove the theorem above by distinguishing the case $b(\xi,\cdot)$ is Lipschitz-continuous (that is $\la=1$) and the case $b(\xi,\cdot)$ has polynomial growth (that is $\la \in\,(1,3]$).
In both cases the following fundamental lemma, whose proof is postponed to  Section \ref{sec7}, holds.

\begin{Lemma}
\label{l5.2}
Assume Hypotheses \ref{H1} and  \ref{H3} holds and fix any $T>0$. 
\begin{enumerate}
\item If $\la=1$, then for every $0<\beta\leq 1$ and  $\gamma \in\,L^2(\Omega;\mathcal{H}_\beta)$
it holds
\begin{equation}
\label{ghc514}
\sup_{\mu \in\,(0,1]}\mathbb{E}\,\sup_{t \in\,[0,T]}|u^\gamma_\mu(t)|_{H^\beta}^2\leq c_T\le(1+\mathbb{E}\,|\gamma|^2_{H^\beta}\r).\end{equation}
\item If $\la \in\,(1,3]$, then for any  $\gamma=(\theta,\eta) \in\,L^2(\Omega;\mathcal{H}_1)$, such that $\theta \in\,L^{\la+1}(\Omega;L^{\la+1}(D))$, and for any $R>0$, it holds
\begin{equation}
\label{ghc300}
\sup_{\mu \in\,(0,1]} \mathbb{P}\le(\sup_{t \in\,[0,T]}|u_\mu^\gamma(t)|_{H^1}\geq R\r)\leq c_T(R)\le(1+\mathbb{E}\,|\gamma|^2_{\mathcal{H}_1}+\mathbb{E}\,|\theta|_{L^{\la+1}}^{\la+1}\r),
\end{equation}
for some function $c_T:[0,+\infty)\to[0,+\infty)$ such that
\[\lim_{R\to\infty} c_T(R)=0.\]
\end{enumerate}
\end{Lemma}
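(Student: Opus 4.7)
For Part (1), with $\lambda=1$, the plan is to work directly from the mild formulation
\[u_\mu^\gamma(t) = \Pi_1 S_\mu(t)\gamma + \frac{1}{\mu}\int_0^t \Pi_1 S_\mu(t-s)(0, B(u_\mu^\gamma(s)))\, ds + \Pi_1\Gamma_\mu(t).\]
Three ingredients conspire to yield a $\mu$-uniform bound. First, the linear energy identity \eqref{energy-est-1} gives $|\Pi_1 S_\mu(t)\gamma|_{H^\beta}\leq |\gamma|_{\mathcal{H}_\beta}$ uniformly for $\mu\in(0,1]$. Second, the smoothing estimate \eqref{s2} yields $|\Pi_1 S_\mu(t-s)(0,y)|_{H^\beta}\leq 2\mu\,|y|_{H^\beta}$, which exactly absorbs the $1/\mu$ prefactor in front of the Duhamel integral. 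Third, the Lipschitz bound \eqref{ghc513}, valid when $\lambda=1$, gives $|B(u)|_{H^\beta}\leq c(1+|u|_{H^\beta})$. Combining them,
\[|u_\mu^\gamma(t)|_{H^\beta}\leq |\gamma|_{\mathcal{H}_\beta}+|\Pi_1\Gamma_\mu(t)|_{H^\beta}+2c\int_0^t\bigl(1+|u_\mu^\gamma(s)|_{H^\beta}\bigr)\,ds,\]
and Gronwall together with the $\mu$-uniform moment bound \eqref{ghc303} on $\mathbb{E}\sup_t|\Pi_1\Gamma_\mu(t)|^2_{H^\beta}$ (applicable for $\beta\leq 1$, since Hypothesis \ref{H1}(1) provides some $\delta>0$ with $\sum_k\lambda_k^2\alpha_k^\delta<\infty$) delivers \eqref{ghc514} after squaring and taking expectation.

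For Part (2), with $\lambda\in(1,3]$, the nonlinearity is only locally Lipschitz and $|B(u)|_{H^1}\leq c(1+|u|^{\lambda-1}_{L^\infty})|u|_{H^1}$ by \eqref{fine339}, so the mild strategy of Part (1) leads to a superlinear Gronwall inequality that does not close. I would instead decompose $u_\mu^\gamma=\eta_\mu+g_\mu$, with $g_\mu:=\Pi_1\Gamma_\mu$, so that $\eta_\mu$ satisfies the pathwise random PDE
\[\mu\,\partial_t^2\eta_\mu+\partial_t\eta_\mu-A\eta_\mu=B(\eta_\mu+g_\mu),\qquad \eta_\mu(0)=\theta,\ \ \partial_t\eta_\mu(0)=\eta,\]
which is free of the $(2\mu)^{-1}\mathrm{Tr}[Q^2]$ Ito correction that derails any direct energy computation performed on $z_\mu$ itself. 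Testing with $\partial_t\eta_\mu$ and using $\langle B(\eta_\mu+g_\mu),\partial_t\eta_\mu\rangle=\frac{d}{dt}\mathfrak{B}(\eta_\mu+g_\mu)-\langle B(\eta_\mu+g_\mu),\Pi_2\Gamma_\mu\rangle$ produces the Lyapunov identity
\[\frac{d}{dt}\!\left[\frac{\mu}{2}|\partial_t\eta_\mu|^2+\frac{1}{2}|\eta_\mu|^2_{H^1}-\mathfrak{B}(\eta_\mu+g_\mu)\right]+|\partial_t\eta_\mu|^2=-\langle B(\eta_\mu+g_\mu),\Pi_2\Gamma_\mu\rangle,\]
whose left-hand side is coercive thanks to \eqref{sk2}, while the initial contribution is controlled by $\mathbb{E}|\gamma|^2_{\mathcal{H}_1}+\mathbb{E}|\theta|^{\lambda+1}_{L^{\lambda+1}}$ via the matching upper bound in \eqref{ghc140}.

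The main obstacle is the forcing $\int_0^t\langle B(\eta_\mu+g_\mu),\Pi_2\Gamma_\mu\rangle\,ds$. Pointwise in time $|\Pi_2\Gamma_\mu(t)|_{H^{-1}}$ blows up like $\mu^{-1/2}$, but the linear energy identity \eqref{energy-est-1} applied to $\Gamma_\mu$ yields $\mathbb{E}\int_0^T|\Pi_2\Gamma_\mu|^2_{H^{-1}}\,ds\leq cT$ uniformly in $\mu$. I would therefore pair $B(\eta_\mu+g_\mu)\in H^1$ with $\Pi_2\Gamma_\mu\in H^{-1}$ in duality, use \eqref{fine339} and the 1D embedding $H^1\hookrightarrow L^\infty$ to bound $|B(\eta_\mu+g_\mu)|_{H^1}\leq c(1+|\eta_\mu|^\lambda_{H^1}+|g_\mu|^\lambda_{H^1})$, exploit the uniform moments of $g_\mu$ from \eqref{ghc303}, and absorb the superlinear $|\eta_\mu|^{2\lambda}_{H^1}$ terms by interpolating them against the $|\eta_\mu|^{\lambda+1}_{L^{\lambda+1}}$ coercivity by Young's inequality combined with Cauchy--Schwarz in time. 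A Gronwall-type comparison then yields, for some nondecreasing unbounded $\Phi:[0,\infty)\to[0,\infty)$,
\[\sup_{\mu\in(0,1]}\mathbb{E}\,\Phi\!\left(\sup_{t\in[0,T]}|u_\mu^\gamma(t)|_{H^1}\right)\leq c_T\bigl(1+\mathbb{E}|\gamma|^2_{\mathcal{H}_1}+\mathbb{E}|\theta|^{\lambda+1}_{L^{\lambda+1}}\bigr),\]
and Chebyshev's inequality converts this into \eqref{ghc300} with $c_T(R)=c_T/\Phi(R)$. The single hardest step is this interpolation: reconciling the superlinear $H^1$-growth of $B$ with only an $L^2_tH^{-1}_x$-type bound on $\Pi_2\Gamma_\mu$ is precisely where the introduction advertises ``completely new uniform bounds,'' and is the reason $\Phi$ is likely to be only a fractional power rather than a polynomial matching the natural $H^1$-growth.
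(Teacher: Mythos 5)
Your Part (1) matches the paper essentially verbatim: mild formulation, \eqref{energy-est-1} for the initial term, \eqref{s2} to absorb the $\mu^{-1}$ in the Duhamel integral, \eqref{ghc513} for the nonlinearity, Gronwall, and \eqref{ghc303} for the stochastic convolution. No issues there.

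Part (2) has a genuine gap, and it lies exactly where you flag the ``hardest step.'' After subtracting the stochastic convolution, both you and the paper test the random PDE for $v_\mu:=u_\mu-\Pi_1\Gamma_\mu$ against $\partial_t v_\mu$. The divergence is in how the nonlinear pairing is decomposed. You write
\[
\langle B(v_\mu+g_\mu),\partial_t v_\mu\rangle_H=\frac{d}{dt}\int_D\mathfrak{b}(\xi,v_\mu+g_\mu)\,d\xi-\langle B(v_\mu+g_\mu),\Pi_2\Gamma_\mu\rangle_H,
\]
which produces a forcing term paired against $\Pi_2\Gamma_\mu$. You then assert that \eqref{energy-est-1} yields $\mathbb{E}\int_0^T|\Pi_2\Gamma_\mu(s)|^2_{H^{-1}}\,ds\le cT$ uniformly in $\mu$. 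That is false. The stochastic convolution is not $S_\mu(t)(x,y)$ for any fixed data, so \eqref{energy-est-1} does not apply to it; the correct computation is either via It\^o's formula on $\mu|\Pi_2\Gamma_\mu|^2_{H^{-1}}+|\Pi_1\Gamma_\mu|^2_H$, which produces an It\^o correction $\mu^{-1}\mathrm{Tr}[(-A)^{-1}Q^2]\,dt$, or via the isometry $\mathbb{E}|\Pi_2\Gamma_\mu(t)|^2_{H^{-1}}=\sum_k\int_0^t|\Pi_2 S_\mu(s)Q_\mu e_k|^2_{H^{-1}}\,ds$ together with \eqref{energy-est-1} applied to the deterministic data $Q_\mu e_k=(0,Qe_k/\mu)$, whose $\mathcal H_0$-norm is $\lambda_k/(\mu\sqrt{\alpha_k})$. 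Either way one finds $\mathbb{E}\int_0^T|\Pi_2\Gamma_\mu|^2_{H^{-1}}\,ds=O(T/\mu)$, so your $L^2_tH^{-1}_x$ control of $\Pi_2\Gamma_\mu$ is \emph{not} uniform in $\mu$ and the Gronwall step does not close. Moreover, even granting the bound, the proposed absorption fails on its own terms: in $d=1$ the $H^1$ norm dominates $L^{\lambda+1}$, and $2\lambda>\lambda+1$ for $\lambda>1$, so $|\eta_\mu|_{H^1}^{2\lambda}$ cannot be interpolated against the $|\eta_\mu|^{\lambda+1}_{L^{\lambda+1}}$ coercivity or against the $|\eta_\mu|^2_{H^1}$ dissipation.

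The paper avoids $\Pi_2\Gamma_\mu$ entirely by choosing a different split of the nonlinear pairing:
\[
\langle B(v_\mu+g_\mu),\partial_t v_\mu\rangle_H=\frac{d}{dt}\int_D\mathfrak{b}(\xi,v_\mu)\,d\xi+\langle B(v_\mu+g_\mu)-B(v_\mu),\partial_t v_\mu\rangle_H,
\]
i.e.\ taking the antiderivative at $v_\mu$, not at $u_\mu$, so no $\partial_t g_\mu$ term appears. The remainder is then estimated by Young's inequality,
\[
\langle B(v_\mu+g_\mu)-B(v_\mu),\partial_t v_\mu\rangle_H\le |B(v_\mu+g_\mu)-B(v_\mu)|_H^2+\tfrac14|\partial_t v_\mu|_H^2,
\]
with the $|\partial_t v_\mu|^2$ absorbed by the damping, and the local Lipschitz bound gives
\[
|B(v_\mu+g_\mu)-B(v_\mu)|_H^2\le c\bigl(1+|g_\mu|_{H^1}^{2\lambda}\bigr)+c\,|g_\mu|^2_{H^1}|v_\mu|^{2(\lambda-1)}_{L^{2(\lambda-1)}}.
\]
The crucial arithmetic is $2(\lambda-1)\le\lambda+1$ precisely because $\lambda\le 3$, so $|v_\mu|^{2(\lambda-1)}_{L^{2(\lambda-1)}}$ is dominated (up to the $|g_\mu|^2_{H^1}$ prefactor) by $-\int_D\mathfrak{b}(\xi,v_\mu)\,d\xi$, yielding a linear Gronwall inequality with a time-dependent coefficient $\sup_t|g_\mu(t)|^2_{H^1}$. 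The resulting bound carries an $\exp\bigl(T\sup_t|\Pi_1\Gamma_\mu|^2_{H^1}\bigr)$ factor that has unbounded moments, which is exactly why the paper proves the probability bound \eqref{ghc300} by splitting on the events where $\sup_t|\Pi_1\Gamma_\mu|^2_{H^1}$ is large, rather than attempting a moment bound $\mathbb{E}\,\Phi(\sup_t|u_\mu|_{H^1})$ as you propose.
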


\subsection{Proof of Theorem \ref{fund} in the Lipschitz case}

We recall that, due to \eqref{f2000}, we have 
\[\a(x,y)\leq \exp\le(\eta\le[|x|_H^2+|y|_H^2\r]\r)|x-y|_H,\ \ \ \ \ x, y \in\,H.\]
For every $\e>0$ we have
\[\begin{array}{l}
\ds{\exp\le(\eta\le[|x|_H^2+|y|_H^2\r]\r)|x-y|_H\leq c\,\exp\le(\eta\le[|x|_H^2+|y|_H^2\r]\r)\le(|x|_H^2+|y|_H^2+1\r)|x-y|_H^{1/2}}\\
\vs
\ds{\leq c_\e\exp\le((\eta+\e)\,\le[|x|_H^2+|y|_H^2\r]\r)|x-y|_H^{1/2},}
\end{array}\]
for some $c_\e>0$.
This implies that
\[\begin{array}{l}
\ds{\mathbb{E}\,\a(u_\mu^{\gamma_\mu}(t),u^{\theta_\mu}(t)) }\\
\vs
\ds{\leq c_\e\le(\mathbb{E}\,\exp\le(2(\eta+\e)\,\le[|u_\mu^{\gamma_\mu}(t)|_H^2+|u^{\theta_\mu}(t)|_H^2\r]\r)\r)^{1/2}\le(\mathbb{E}\,|u_\mu^{\gamma_\mu}(t)-u^{\theta_\mu}(t)|_H\r)^{1/2}.}
\end{array}\]
Therefore, \eqref{ghc500} holds if we can prove that 
\begin{equation}
\label{ghc540}
\lim_{\mu\to 0}\,\mathbb{E}\le(|u_\mu^{\gamma_\mu}(t)-u^{\theta_\mu}(t)|_H\r)=0,
\end{equation}
and, for some $\mu_0>0$ and $\bar{\eta}>2 \eta>0$,
\begin{equation}
\label{h10}
\sup_{\mu\leq \mu_0}\,\mathbb{E}\,\exp\le(\bar{\eta}      \,\le[|u_\mu^{\gamma_\mu}(t)|_H^2+|u^{\theta_\mu}(t)|_H^2\r]\r)<\infty.
\end{equation}

Concerning \eqref{h10}, due to Lemma \ref{lemma-c2} and Lemma \ref{lemma-c3} (cfr. \eqref{h7} and \eqref{h17}), there exists $\eta>0$ such that for every $T>0$ and $t\leq T$
\[\mathbb{E}\,\exp\le(\eta\,\le[|u_\mu^{\gamma_\mu}(t)|_H^2+|u^{\theta_\mu}(t)|_H^2\r]\r)\leq 
c_T\,\mathbb{E}\exp\le(\eta\,\le[\mu\,|\theta_\mu|_{H^1}^2+|\theta_\mu|_H^2+\mu^2\,|\eta_\mu|_H^2+\mu\,|\theta_\mu|_{L^{\la+1}}^{\la+1}\r]\r).\]
Hence, as $\mathcal{L}(\gamma_\mu)=\nu_\mu$, due to \eqref{h4} we get
\[\begin{array}{l}
\ds{\sup_{\mu\leq 1/2}\mathbb{E}\,\exp\le(\eta\,\le[|u_\mu^{\gamma_\mu}(t)|_H^2+|u^{\theta_\mu}(t)|_H^2\r]\r)}\\
\vs
\ds{\leq c_T\,\sup_{\mu\leq 1/2}\int_{\mathcal{H}_1}\exp\le(\eta\,\le[\mu\,|u|_{H^1}^2+|u|_H^2+\mu^2\,|v_\mu|_H^2+\mu\,|u|_{L^{\la+1}}^{\la+1}\r]\r)
\,d\nu_\mu(u,v)<\infty,}
\end{array}\]
and \eqref{h10} follows.

Now, let us prove \eqref{ghc540}. Referring back to \eqref{ghc513} and \eqref{fine335} and and using
 \eqref{s2} and the Lipschitz continuity of $B$ in $H$, we have
\[\begin{array}{l}
\ds{|u_\mu^{\gamma_\mu}(t)-u^{\theta_\mu}(t)|_H\leq |\Pi_1 S_\mu(t)\gamma_\mu-S(t)\theta_\mu|_H+c\int_0^t|u_\mu^{\gamma_\mu}(s)-u^{\theta_\mu}(s)|_H\,ds}\\
\vs
\ds{+\int_0^t\le|\frac 1\mu\Pi_1 S_\mu(t-s)(0,B(u^{\theta_\mu}(s))-S(t-s)B(u^{\theta_\mu}(s))\r|_H\,ds+|\Pi_1 \Gamma_\mu(t)-\Gamma(t)|_H.}
\end{array}\]
Therefore, as a consequence of the Gronwall lemma, we have
\[\begin{array}{l}
\ds{\sup_{t \in\,[0,T]}\,|u_\mu^{\gamma_\mu}(t)-u^{\theta_\mu}(t)|_H\leq c_T\,\sup_{t \in\,[0,T]}\,|\Pi_1 S_\mu(t)\gamma_\mu-S(t)\theta_\mu|_H}\\
\vs
\ds{+c_T\sup_{t \in\,[0,T]}\int_0^t\le|\frac 1\mu\Pi_1 S_\mu(t-s)(0,B(u^{\theta_\mu}(s))-S(t-s)B(u^{\theta_\mu}(s))\r|_H\,ds}\\
\vs
\ds{+c_T\,\sup_{t \in\,[0,T]}\,|\Pi_1 \Gamma_\mu(t)-\Gamma(t)|_H=:c_T \sum_{k=1}^3 I_{\mu,k}.}
\end{array}\]
In \cite[proof of Theorem 4.1]{cs}, it is shown that 
\[\lim_{\mu\to 0} \mathbb{E} I_{\mu,3}=0.\]
Therefore, in order to get \eqref{ghc540}, we have to prove  
\[\lim_{\mu\to 0}\mathbb{E}\,I_{\mu,k}=0,\ \ \ \ k=1,2.\]

\medskip

We start with $I_{\mu,1}$. For every fixed $R>0$ and $\beta \in\,(0,1]$, we have
\[\begin{array}{l}
\ds{\mathbb{E}\,I_{\mu,1}\leq \mathbb{E}\le(I_{\mu,1} ; |\theta_\mu|_{H^\beta}+\sqrt{\mu}\,|\eta_\mu|_{H}\leq R\r)}\\
\vs
\ds{+\mathbb{E}\le(I_{\mu,1} ; \{|\theta_\mu|_{H^\beta}\geq  R/2\}\cup \{\sqrt{\mu} |\eta_\mu|_{H}\geq R/2\}\r)=:J^{\mu,1}_R+J^{\mu,2}_R.}
\end{array}\]
Due to \eqref{energy-est-1}, we have
\[\begin{array}{l}
\ds{J^{\mu,2}_R\leq \frac c{R}\,\le(\mathbb{E}|I_{\mu,1}|^2\r)^{\frac 12} \le(\mathbb{E}\,|\theta_\mu|_{H^\beta}^2+\mu\,\mathbb{E}|\eta_\mu|^2_{H}\r)^{\frac 12}}\\
\vs
\ds{\leq \frac{c_T}{R}\le(\mathbb{E}|\theta_\mu|_H^2+\mu\,\mathbb{E}|\eta_\mu|_{H^{-1}}^2\r)^{\frac 12}\le(\mathbb{E}\,|\theta_\mu|_{H^\beta}^2+\mu\,\mathbb{E}|\eta_\mu|^2_{H}\r)^{\frac 12}\leq  \frac{c_T}{R}\le(\mathbb{E}\,|\theta_\mu|_{H^1}^2+\mu\,\mathbb{E}|\eta_\mu|^2_{H}\r)}\\
\vs
\ds{=\frac{c_T}{R}\int_{\mathcal{H}_1}\le(|u|_{H^1}^2+\mu\,|v|_H^2\r)\,d\nu_\mu(u,v).}
\end{array}\]
Therefore, thanks to \eqref{ghc100}, we have
\[\sup_{\mu \in\,(0,1/2]}J^{\mu,2}_R\leq \frac {c_T}R.\]
This means that if we fix any $\e>0$ we can find $R_\e>0$ such that
$J^{\mu,2}_{R_\e}\leq \e$, for every $\mu\in\,(0,1/2]$.
Once fixed $R_\e$, due to \eqref{Smu-to-Se-at-a-point}, we have
\[\limsup_{\mu\to 0}\mathbb{E} I_{\mu,1}\leq \e+\lim_{\mu\to 0}J^{\mu,1}_{R_\e}=\e,\] so that, due to the arbitrariness of $\e$, we conclude that
\begin{equation}
\label{ghc510}
\lim_{\mu\to 0}\,\sup_{t \in\,[0,T]}\,|\Pi_1 S_\mu(t)\gamma_\mu-S(t)\theta_\mu|_H=\lim_{\mu\to 0}\,\mathbb{E} I_{\mu,1}=0.
\end{equation}

\medskip

Concerning $I_{\mu,2}$, 
we have, for any $R>0$, 
\[\begin{array}{l}
\ds{\mathbb{E}\,I_{\mu,2}=\mathbb{E}\le(I_{\mu,2};\,|B(u^{\theta_\mu}(\cdot))|_{L^1(0,T;H^\beta)}\leq R\r)+\mathbb{E}\le(I_{\mu,2};\,|B(u^{\theta_\mu}(\cdot))|_{L^1(0,T;H^\beta)}> R\r)}\\
\vs
\ds{=:J^{\mu,1}_R+J^{\mu,2}_R.}
\end{array}\]

As a consequence of \eqref{ghc512},  \eqref{ghc513} and \eqref{ghc520}
\[\begin{array}{l}
\ds{\mathbb{E}\,|I_{\mu,2}|^2\leq c_{\beta,T}\,\mathbb{E}\,|B(u^{\theta_\mu})|^2_{L^2(0,T;H^\beta)}\leq c_{\beta,T}\,\le(1+\mathbb{E}\,|u^{\theta_\mu}|^2_{L^2(0,T;H^\beta)}\r)\leq c_{\beta,T}\le(1+\mathbb{E}\,|\theta_\mu|_{H^\beta}^2\r).}
\end{array}\]
Thanks to \eqref{ghc100}, this implies that
\[
\sup_{\mu \in\,(0,1/2]}\,\mathbb{E}\,|I_{\mu,2}|^2\leq c_{\beta,T}\,\sup_{\mu \in\,(0,1/2]}\,\mathbb{E}\,|B(u^{\theta_\mu})|^2_{L^2(0,T;H^\beta)}<+\infty.
\]
Therefore, for every $\e>0$ we can find $R_\e>0$ such that 
\[\sup_{\mu \in\,(0,1/2]} \,J^{\mu,2}_{R_\e}\leq \frac c{R_\e}\leq \e.\]
Once fixed $R_\e$, due to \eqref{Smu-to-Se-2} we have
\[\lim_{\mu\to 0}J^{\mu,1}_{R_\e}=0,\] so that we conclude that
\[\lim_{\mu\to 0}\mathbb{E}\,I_{\mu,2}=0.\]
This, together with \eqref{ghc510}, allows to obtain \eqref{ghc540}.

\subsection{Proof of Theorem \ref{fund} in the non Lipschitz case}

In this case
$\a(x,y)=|x-y|_H$, so that \eqref{ghc500} follows if we prove that 
\[\lim_{\mu\to 0} \mathbb{E}\,|u^{\gamma_\mu}_\mu(t)-u^{\theta_\mu}(t)|_H=0.\]
For every $R>0$, we have
\begin{equation}
\label{ghc30}
\begin{array}{l}
\ds{\mathbb{E}\,|u^{\gamma_\mu}_\mu(t)-u^{\theta_\mu}(t)|_{H}\leq \mathbb{E}\le(|u^{\gamma_\mu}_\mu(t)-u^{\theta_\mu}(t)|_{H}\,;\,\tau^\mu_R\leq t\r)+\mathbb{E}\le(|u^{\gamma_\mu}_\mu(t)-u^{\theta_\mu}(t)|_{H}\,;\,\tau^\mu_R> t\r)}\\
\vs
\ds{=:J^{\mu,1}_R(t)+J^{\mu,2}_R(t),}
\end{array}
\end{equation}
where
\[\tau^\mu_R(t):=\inf\le\{ s\geq 0\,:\,\max\,\le(|u^{\gamma_\mu}_\mu(s)|_{L^\infty}, |u^{\theta_\mu}(s)|_{L^\infty}\r)\geq R \r\}.\]

Concerning the first term $J^{\mu,1}_R(t)$, we have
\[\begin{array}{l}
\ds{|J^{\mu,1}_R(t)|^2\leq \mathbb{E} \,|u^{\gamma_\mu}_\mu(t)-u^{\theta_\mu}(t)|^2_{H}\,\mathbb{P}\le(\tau^\mu_R\leq t\r)}\\
\vs
\ds{\leq 2\le[\mathbb{E}|u^{\gamma_\mu}_\mu(t)|_H^2+\mathbb{E}|u^{\theta_\mu}(t)|^2_{H}\r]\le[\mathbb{P}\le(\sup_{s\leq t}|u^{\gamma_\mu}_\mu(s)|_{L^\infty}>R\r)+\mathbb{P}\le(\sup_{s\leq t}|u^{\theta_\mu}(s)|_{L^\infty}>R\r)\r]}
\end{array}\]

Thanks to \eqref{ghc1}, for every $\mu\leq 1$ we get
\[\sup_{t \in\,[0,T]}\,\mathbb{E}|u^{\gamma_\mu}_\mu(t)|_H^2\leq c_T\le(1+\mu\,\mathbb{E}\,|\theta_\mu|^2_{H^1}+\mathbb{E}\,|\theta_\mu|_H^2+\mu^2 \mathbb{E}\,|\eta_\mu|_H^2+\,\mu \,\mathbb{E}\,|\theta_\mu|_{L^{\la+1}}^{\la+1}\r).\]
Moreover, due to \eqref{ghc533} we have
\[\mathbb{E}\,\sup_{t \in\,[0,T]}|u^{\theta_\mu}(t)|_H^2\leq c_T\le(1+\mathbb{E}\,|\theta_\mu|_H^2\r).\]
Therefore, in view of \eqref{ghc100}, we can conclude that
for every $R>0$ and $\mu \in\,(0,1/2]$
\[\sup_{t \in\,[0,T]}|J^{\mu,1}_R(t)|^2\leq c_T\,\sup_{t \in\,[0,T]}\,\le[\mathbb{P}\le(\sup_{s\leq t}|u^{\gamma_\mu}_\mu(s)|_{L^\infty}>R\r)+\mathbb{P}\le(\sup_{s\leq t}|u(s)|_{L^\infty}>R\r)\r].\]

As shown in Lemma \ref{l5.2}, for every $R>0$ and $\mu \in\,(0,1]$
\[\mathbb{P}\le(\sup_{s\leq t}|u^{\gamma_\mu}_\mu(s)|_{L^\infty}>R\r)\leq c_T(R)\le(1+\mathbb{E}\,|\gamma_\mu|^2_{\mathcal{H}_1}+\mathbb{E}\,|\theta_\mu|_{L^{\la+1}}^{\la+1}\r).\]
Analogously, as a consequence of \eqref{ghc521}, for  every $\mu>0$ we have
\[\mathbb{P}\le(\sup_{s\leq t}|u^{\theta_\mu}(s)|_{L^\infty}>R\r)\leq \frac{c_T}{R^2}\le(1+\mathbb{E}\,|\theta_\mu|_{H^1}^2\r).\]
Therefore, since $c_T(R)\to 0$, as $R\to \infty$, thanks to \eqref{ghc100}, for any $\e>0$ there exists  $R_\e>0$ such that
\[\sup_{\mu \in\,(0,1/2]}\,\sup_{t \in\,[0,T]}\,J^{\mu,1}_{R_\e}(t)\leq \e.
\]
Thanks to \eqref{ghc30}, this yields
\begin{equation}
\label{ghc102}
\sup_{t \in\,[0,T]}\,\mathbb{E}|u^{\gamma_\mu}_\mu(t)-u^{\theta_\mu}(t)|_H\leq \e+J^{\mu,2}_{R_\e}(t),\ \ \ \ \mu \in\,(0,1/2].
\end{equation}

Now, let us estimate $J_{R_\e}^{\mu,2}(t)$. For every $\mu>0$, we have
\[\begin{array}{l}
\ds{u^{\gamma_\mu}_\mu(t)-u^{\theta_\mu}(t)=\le[\Pi_1 S_\mu(t)(\theta_\mu,\eta_\mu)-S(t)\theta_\mu\r]}\\
\vs
\ds{+\int_0^t\le[\Pi_1 S_\mu(t-s)B_\mu(z^{\gamma_\mu}_\mu(s))-S(t-s)B(u^{\theta_\mu}(s))\r]\,ds+\le[\Pi_1  \Gamma_\mu(t)-\Gamma(t)\r]=:\sum_{i=1}^3 I^{\mu}_{i}(t),}
\end{array}\]
where $\Gamma_\mu(t)$ and $\Gamma(t)$ are the two stochastic convolutions, defined respectively in \eqref{ghc3} and \eqref{ghc4}.
This implies that 
\begin{equation}
\label{ghc31}
\begin{array}{l}
\ds{J^{\mu,2}_{R_\e}(t)\leq \mathbb{E}\,|I^\mu_1(t)|_H+\mathbb{E}|I^\mu_3(t)|_H+\mathbb{E}\le(\le|I^\mu_2(t)\r|_{H}\,;\,\tau^\mu_{R_\e}> t\r).}
\end{array}
\end{equation}

We have
\[\begin{array}{l}
\ds{I^\mu_2(t)=\int_0^t\Pi_1 S_\mu(t-s)\le[B_\mu(z^{\gamma_\mu}_\mu(s))-B_\mu(u^{\theta_\mu}(s),0)\r]\,ds+\int_0^t\Phi_\mu(t-s)B(u^{\theta_\mu}(s))\,ds}\\
\vs
\ds{=:I^{\mu,1}_2(t)+I^{\mu,2}_2(t),}
\end{array}\]
where $\Phi_\mu(t)$ is the operator introduced in \eqref{ghc534}.

According to \eqref{s2}, we have, in view of 
\[|I^{\mu,1}_2(t)|_{H}\leq 2 \int_0^t|B(u^{\gamma_\mu}_\mu(s))-B(u^{\theta_\mu}(s))|_H\,ds.\]
For every $u,v \in\,L^\infty(D)$, we have, in view of \eqref{sk1}, that 
\[\begin{array}{l}
\ds{|B(u)-B(v)|_H\leq c\,\le||u-v|\le(1+|u|^{\la-1}+|v|^{\la-1}\r)\r|_H}\\
\vs
\ds{\leq c\,|u-v|_H\le(1+|u|_{L^\infty}^{\la-1}+|v|_{L^\infty}^{\la-1}\r).}
\end{array}\]
Then
\begin{equation}
\label{ghc104}
\begin{array}{l}
\ds{\mathbb{E}\le(|I^{\mu,1}_2(t)|_{H}\,;\,\tau^\mu_{R_\e}> t\r)\leq c\,\le(1+2\,R_\e^{\la-1}\r)\int_0^t\mathbb{E}\le(|u^{\gamma_\mu}_\mu(s)-u(s)|_H\,;\,\tau^\mu_{R_\e}>t\r)\,ds}\\
\vs
\ds{\leq c\,\le(1+2\,R_\e^{\la-1}\r)\int_0^t J^{\mu,2}_{R_\e}(s)\,ds.}
\end{array}
\end{equation}
Combining this bounds with  to \eqref{ghc31}, this yields
\[\begin{array}{l}
\ds{J^{\mu,2}_{R_\e}(t)\leq \mathbb{E}\,|I^\mu_1(t)|_H+\mathbb{E}|I^\mu_3(t)|_H+\mathbb{E}\le(|I^{\mu,2}_2(t)|_{H}\,;\,\tau^\mu_{R_\e}> t\r)+c\,\le(1+2\,R_\e^{\la-1}\r)\int_0^t J^{\mu,2}_{R_\e}(s)\,ds,}
\end{array}\]
and, due to the Gronwall lemma, for every $t \in\,[0,T]$
\begin{equation}
\label{ghc110}
J^{\mu,2}_{R_\e}(t)\leq c_{T,\e}\sup_{t \in\,[0,T]}\,\le( \mathbb{E}\,|I^\mu_1(t)|_H+\mathbb{E}|I^\mu_3(t)|_H+\mathbb{E}\le(|I^{\mu,2}_2(t)|_{H}\,;\,\tau^\mu_{R_\e}> t\r)\r).
\end{equation}

Next, if $\tau^\mu_{R_\e}>t$,  in view of \eqref{fine339} we have
\[|B(u^{\theta_\mu}(s))|_{H^1}\leq c\le(1+|u^{\theta_\mu}(s)|_{L^\infty}^{\la-1}\r)|u^{\theta_\mu}(s)|_{H^1}\leq c\,\le(1+R_\e^{\la-1}\r)|u^{\theta_\mu}(s)|_{H^1},\ \ \ \ s\leq t,\]
so that, thanks to Lemma \ref{a-priori1},
\[\le|B(u^{\theta_\mu}(\cdot)) \mathbb{I}_{\{\tau^\mu_{R_\e}>t\}}\r|_{L^2([0,t];H^1)}\leq c,\ \ \ \ \mathbb{P}-\text{a.s.}\]
Thanks to \eqref{Smu-to-Se-2}, this allows us to conclude that \begin{equation}
\label{ghc105}
\lim_{\mu\to 0} \mathbb{E}\,\sup_{t \in\,[0,T]}\,\le(\le|I^{\mu,2}_2(t)\r|_{H}\,;\,\tau^\mu_{R_\e}> t\r)=0.\end{equation}

Moreover, in \cite[Theorem 4.1]{cs} it is proven that
\begin{equation}
\label{ghc108}
\lim_{\mu\to 0}\sup_{ t \in\,[0,T]}\mathbb{E}\,|I^\mu_3(t)|_H=0.\end{equation}
Therefore, collecting together \eqref{ghc110}, \eqref{ghc105} and \eqref{ghc108}, in view of \eqref{ghc510}, we conclude
\[\lim_{\mu\to 0}
J^{\mu,2}_{R_\e}(t)=0.\]
According to \eqref{ghc102}, due to the arbitrariness of $\e>0$, \eqref{ghc500} follows.

\section{Proof of Lemma \ref{l5.2}}
\label{sec7}

In our proof of Lemma \ref{l5.2}, we distinguish the case $\la=1$ and the case $\la \in\,(1,3]$.

\medskip

{\em Case $\la=1$.} We have 
\[u_\mu^\gamma(t)=\Pi_1 S_\mu(t)\gamma+\int_0^t \Pi_1 S_\mu(t-s) B_\mu(z_\mu^\gamma(s))\,ds+\Pi_1 \Gamma_\mu(t).\]
Then, thanks to \eqref{s2} and \eqref{energy-est-1}, for every $t \in\,[0,T]$ and $\mu \in\,(0,1]$ we have
\[|u_\mu^\gamma(t)|^2_{H^\beta}\leq c\,|\gamma|^2_{\mathcal{H}_\beta}+c_T\,\int_0^t|B(u^\gamma_\mu(s))|^2_{H^\beta}\,ds+c\,\le|\Pi_1 \Gamma_\mu(t)\r|^2_{H^\beta}.\]
Due to \eqref{ghc513}, this implies
\[|u_\mu^\gamma(t)|^2_{H^\beta}\leq c_T\,\le(|\gamma|^2_{\mathcal{H}_\beta}+\sup_{t \in\,[0,T]}\,\le|\Pi_1 \Gamma_\mu(t)\r|^2_{H^\beta}\r)+c_T\int_0^t\le(|u^{\gamma}_\mu(s)|_{H^\beta}^2+1\r)\,ds,\]
so that, thanks to the Gronwall lemma and \eqref{ghc303} we can conclude that
\eqref{ghc514} holds.

{\em Case $\la \in\,(1,3]$.} If we denote
\[v_\mu^\gamma(t):=u_\mu^\gamma(t)-\Pi_1\Gamma_\mu(t),\]
we have that $v_\mu^\gamma$ solves the equation
\begin{equation}
\label{ghc305}
\le\{\begin{array}{l}
\ds{\mu\, \partial^2_t v^\gamma_\mu(t,\xi)=\Delta v^\gamma_\mu(t,\xi)-\partial_t v^\gamma_\mu(t,\xi)+b(\xi,u_\mu(t,\xi)),}\\
\vs
\ds{v^\gamma_\mu(0,\xi)=\theta(\xi),\ \ \partial_t v^\gamma_\mu(0,\xi)=\eta(\xi),\ \ \xi \in\,D,\ \ \ \ v^\gamma_\mu(t,\xi)=0,\ \ \ \ t\geq 0,\ \ \xi \in\,\partial D.}
\end{array}
\r.
\end{equation}
This implies
\[\begin{array}{l}
\ds{\mu\,\frac d{dt}|\partial_t v_\mu^\gamma(t)|_{H}^2+\frac d{dt}|v_\mu^\gamma(t)|_{H^1}^2+2\,|\partial_t v_\mu^\gamma(t)|_{H}^2}\\
\vs
\ds{=2\,\langle b(\cdot,v^\gamma_\mu(t)),\partial_t v^\gamma_\mu(t)\rangle_H+2\,\langle b(\cdot,v^\gamma_\mu(t)+\Pi_1\Gamma_\mu(t))-b(\cdot,v^\gamma_\mu(t)),\partial_t v^\gamma_\mu(t)\rangle_H}\\
\vs
\ds{\leq 2\,\frac d{dt}\int_D \mathfrak{b}(\xi,v_\mu^\gamma(t,\xi))\,d\xi+\le|b(\cdot,v^\gamma_\mu(t)+\Pi_1\Gamma_\mu(t))-b(\cdot,v^\gamma_\mu(t))\r|_H^2+|\partial_t v^\gamma_\mu(t)|_H^2,}
\end{array}\]
where $\mathfrak{b}$ is the antiderivative of $b$ that satisfies \eqref{ghc140}. Thus
\begin{equation}
\label{ghc307}
\begin{array}{l}
\ds{\mu\,\frac d{dt}|\partial_t v_\mu^\gamma(t)|_{H}^2+\frac d{dt}|v_\mu^\gamma(t)|_{H^1}^2+|\partial_t v_\mu^\gamma(t)|_{H}^2-2 \frac d{dt}\int_D \mathfrak{b}(\xi,v_\mu^\gamma(t,\xi))\,d\xi}\\
\vs
\ds{\leq \le|b(\cdot,v^\gamma_\mu(t)+\Pi_1\Gamma_\mu(t))-b(\cdot,v^\gamma_\mu(t))\r|_H^2.}
\end{array}
\end{equation}
Now,  for every $x \in\,L^{2(\la-1)}(D)$ and $h \in\,L^\infty(D)$, we have
\[|b(x+h)-b(x)|_H\leq c\,\le||h|\le(1+|x|^{\la-1}+|h|^{\la-1}\r)\r|_H\leq c\,|h|_{L^\infty}\le(1+|x|_{L^{2(\la-1)}}^{\la-1}+|h|_{L^{2(\la-1)}}^{\la-1}\r).\]
Hence, as $H^1\hookrightarrow L^\infty(D)$ (we are assuming $d=1$ here), we have
\[\le|b(\cdot,v^\gamma_\mu(t)+\Pi_1\Gamma_\mu(t))-b(\cdot,v^\gamma_\mu(t))\r|_H^2\leq c\,\le(1+|\Pi_1\Gamma_\mu(t)|_{H^1}^{2\la}\r)+c\,|\Pi_1\Gamma_\mu(t)|_{H^1}^{2}|v^\gamma_\mu(t)|_{L^{2(\la-1)}}^{2(\la-1)}.\]
Since $\la\leq 3$, we have that $2(\la-1)\leq \la+1$, so that, thanks to \eqref{ghc140} we get
\[\begin{array}{l}
\ds{\le|b(\cdot,v^\gamma_\mu(t)+\Pi_1\Gamma_\mu(t))-b(\cdot,v^\gamma_\mu(t))\r|_H^2}\\
\vs
\ds{\leq c\,\le(1+|\Pi_1\Gamma_\mu(t)|_{H^1}^{2\la}\r)-c\,|\Pi_1\Gamma_\mu(t)|_{H^1}^{2}\int_D \mathfrak{b}(\xi,v^\gamma_\mu(t,\xi))\,d\xi.}
\end{array}\]
Therefore, if we replace the inequality above in \eqref{ghc307}, we obtain
\[\begin{array}{l}
\ds{\mu\,\frac d{dt}|\partial_t v_\mu^\gamma(t)|_{H}^2+\frac d{dt}|v_\mu^\gamma(t)|_{H^1}^2+|\partial_t v_\mu^\gamma(t)|_{H}^2-2 \frac d{dt}\int_D \mathfrak{b}(\xi,v_\mu^\gamma(t,\xi))\,d\xi}\\
\vs
\ds{\leq  c\,\le(1+|\Pi_1\Gamma_\mu(t)|_{H^1}^{2\la}\r)-c\,|\Pi_1\Gamma_\mu(t)|_{H^1}^{2}\int_D \mathfrak{b}(\xi,v^\gamma_\mu(t,\xi))\,d\xi.}
\end{array}\]

Now, if we integrate with respect to $t \in\,[0,T]$, we obtain
\[\begin{array}{l}
\ds{\mu\,|\partial_t v_\mu^\gamma(t)|_{H}^2+|v_\mu^\gamma(t)|_{H^1}^2+\int_0^t|\partial_t v_\mu^\gamma(s)|_{H}^2\,ds-2 \int_D \mathfrak{b}(\xi,v_\mu^\gamma(t,\xi))\,d\xi}\\
\vs
\ds{\leq  \mu\,|\eta|_{H}^2+|\theta|_{H^1}^2-2 \int_D \mathfrak{b}(\xi,\theta(\xi))\,d\xi+c\,\le(1+\sup_{t \in\,[0,T]}|\Pi_1\Gamma_\mu(t)|_{H^1}^{2\la}\r)}\\
\vs
\ds{-c\,\sup_{t \in\,[0,T]}\,|\Pi_1\Gamma_\mu(t)|_{H^1}^{2}\int_0^t\int_D \mathfrak{b}(\xi,v^\gamma_\mu(s,\xi))\,d\xi\,ds.}
\end{array}\]
Then, as a consequence of 
the Gronwall lemma and of \eqref{ghc140} we get
\[\begin{array}{l}
\ds{\sup_{t \in\,[0,T]}\le(\mu\,|\partial_t v_\mu^\gamma(t)|_{H}^2+|v_\mu^\gamma(t)|_{H^1}^2+|v_\mu^\gamma(t))|_{L^{\la+1}}^{\la+1}\r)+\int_0^T|\partial_t v_\mu^\gamma(s)|_{H}^2\,ds}\\
\vs
\ds{\leq  c_T\le(\mu\,|\eta|_{H}^2+|\theta|_{H^1}^2+|\theta|_{L^{\la+1}}^{\la+1}+c\,\le(1+\sup_{t \in\,[0,T]}|\Pi_1\Gamma_\mu(t)|_{H^1}^{2\la}\r)\r)\exp\le(T\,\sup_{t \in\,[0,T]}\,|\Pi_1\Gamma_\mu(t)|_{H^1}^{2}\r).}
\end{array}\]
This implies that, if we define
\[\La_\mu(T):=\sup_{t \in\,[0,T]}\,|\Pi_1\Gamma_\mu(t)|_{H^1}^{2},\]
for every $\mu \in\,(0,1]$ we have
\[\begin{array}{l}
\ds{\mathbb{P}\le(\sup_{t \in\,[0,T]}|v_\mu^\gamma(t)|_{H^1}^2\geq \frac{R^2}4\r)\leq \mathbb{P}\le(\exp (T \La_\mu(T))\geq \frac R2\r)}\\
\vs
\ds{+
\mathbb{P}\le(\mu\,|\eta|_{H}^2+|\theta|_{H^1}^2+|\theta|_{L^{\la+1}}^{\la+1}\geq \frac R{4 c_T}\r)+\mathbb{P}\le(\La_\mu(T)^{\la}\geq \frac R{4 c c_T}-1\r)}\\
\vs
\ds{\leq \mathbb{P}\le(\La_\mu(T)\geq \frac 1T\log\le(\frac R2\r)\r)+\mathbb{P}\le(\La_\mu(T)\geq \le(\frac R{4 c c_T}-1\r)^{\frac 1\la}\r)+\mathbb{P}\le(|\gamma|^2_{\H^1}+|\theta|_{L^{\la+1}}^{\la+1}\geq \frac R{4 c_T}\r).}
\end{array}\]
Due to \eqref{ghc303}, this implies that
\[\begin{array}{l}
\ds{\sup_{\mu \in\,(0,1]}\mathbb{P}\le(\sup_{t \in\,[0,T]}|v_\mu^\gamma(t)|_{H^1}^2\geq \frac{R^2}4\r)}\\
\vs
\ds{\leq c_T\le(T\le(\log\le(\frac R2\r)\r)^{-1}+\le(\frac R{4 c c_T}-1\r)^{-\frac 1\la}\r)+\frac {4 c_T}R\,\mathbb{E}\le(|\gamma|^2_{\H^1}+|\theta|_{L^{\la+1}}^{\la+1}\r).}
\end{array}\]

Now, since
\[\begin{array}{l}
\ds{\mathbb{P}\le(\sup_{t \in\,[0,T]}|u_\mu^\gamma(t)|_{H^\beta}\geq R\r)}\\
\vs
\ds{\leq \mathbb{P}\le(\sup_{t \in\,[0,T]}|\Pi_1\Gamma_\mu(t)|_{H^1}\geq\frac R2\r)+\mathbb{P}\le(\sup_{t \in\,[0,T]}|v_\mu^\gamma(t)|_{H^1}^2\geq \frac{R^2}4\r),}
\end{array}\]
by using again \eqref{ghc303}, the inequality above gives \eqref{ghc300}, for  any $\la \in\,(1,3]$.

\section{Uniform bounds for the moments of equation \eqref{eq1-abstr}}
\label{sec8}

 For every $\mu>0$, we denote by $N_\mu$ the Kolmogorov operator associated with equation \eqref{eq1-abstr}. If $\varphi:\mathcal{H}_1\to \reals$  is a twice continuously differentiable mapping, with
 \[\text{Tr}\le[D^2\varphi(z) Q_\mu Q^\star_\mu\r]<\infty,\ \ \ \ z \in\,\H_1,\]
  then we have
\[N_\mu \varphi (z)=\frac 12 \text{Tr}\le[D^2\varphi(z) Q_\mu Q^\star_\mu\r]+\langle A_\mu z+B_\mu(z),D\varphi(z)\rangle_{\mathcal{H}},\ \ \ \ \ z \in\,D(A_\mu),
\]
where $Q_\mu$, $A_\mu$ and $B_\mu$ are defined respectively in \eqref{qmu}, \eqref{amu} and \eqref{bmu}.

In Section \ref{sec3}, we have seen that for every $\mu>0$, equation \eqref{eq1-abstr} admits an invariant measure $\nu_\mu$ on the space $\mathcal{H}_1$. In particular, due to invariance, we have
\begin{equation}
\label{ghc200}
\int_{{\mathcal H}_1}N_\mu\varphi(z)\,d\nu_\mu(z)=0.
\end{equation}

Now, for every $\vartheta, \mu>0$ we define
\begin{equation}
\label{h1}
K_{\vartheta,\mu}(u,v):=\mu\,|u|_{H^1}^2+(\vartheta \,\mu+1/2)\,|u|_H^2+\mu^2\,|\vartheta\,u+v|_{H}^2+\mu\,\langle u,v\rangle_H-2\,\mu\int_D\mathfrak{b}(\xi,u(\xi))\,d\xi,\end{equation}
where  $\mathfrak{b}$ is the antiderivative of $b$ that satisfies \eqref{ghc140} and \eqref{ghc430}, in case $\la \in\,(1,3]$ and $\la=1$, respectively.
We have
\begin{equation}
\label{ghc268}
\begin{array}{rl}
\ds{D_u K_{\vartheta,\mu}(u,v)\cdot h=} & \ds{2\mu\langle u,h\rangle_{H^1}+(2\vartheta^2\mu^2+2\vartheta \mu+1)\langle u,h\rangle_H}\\
& \vs
& \ds{+(1+\mu(\vartheta+1))\langle \mu v,h\rangle_H-2\mu\,\langle b(u),h\rangle_H,}\\
\vs
\ds{D_v K_{\vartheta,\mu}(u,v)\cdot h=} & \ds{\mu(1+2\vartheta \mu)\langle u,h\rangle_H+2\mu\langle \mu v,h\rangle_H,}\\
\vs
\ds{D^2_v K_{\vartheta,\mu}(u,v)=}  &  \ds{2\mu^2\,I.}
\end{array}\end{equation}

\begin{Lemma}
For every $\bar{\mu}, \bar{\vartheta} >0$ there exist some $c_1, c_2>0$ such that for every $(u,v) \in\,\mathcal{H}_1$\begin{equation}
\label{h2}
c_1\,K_\mu(u,v)\leq K_{{\vartheta},\mu}(u,v)\leq c_2\le(K_\mu(u,v)+1\r),\ \ \ \ \vartheta\leq \bar{\vartheta},\ \ \mu\leq \bar{\mu}
\end{equation}
where
\begin{equation}
\label{h5}
K_\mu(u,v)=\mu\,|u|_{H^1}^2+\mu\,|u|_{L^{\la+1}}^{\la+1}+|u|_H^2+\mu^2\,|v|_H^2.\end{equation}
\end{Lemma}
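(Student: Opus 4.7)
The plan is to expand $\mu^2|\vartheta u + v|_H^2 = \vartheta^2\mu^2|u|_H^2 + 2\vartheta\mu^2\langle u,v\rangle_H + \mu^2|v|_H^2$, which rewrites $K_{\vartheta,\mu}$ as
\begin{equation*}
K_{\vartheta,\mu}(u,v) = \mu|u|_{H^1}^2 + \bigl(\tfrac{1}{2}+\vartheta\mu+\vartheta^2\mu^2\bigr)|u|_H^2 + \mu^2|v|_H^2 + \mu(1+2\vartheta\mu)\langle u,v\rangle_H - 2\mu\int_D \mathfrak{b}(\xi,u(\xi))\,d\xi.
\end{equation*}
Both inequalities in \eqref{h2} then reduce to controlling the cross term via Young's inequality and invoking the pointwise bounds on $\mathfrak{b}$ from \eqref{ghc140} and \eqref{ghc430}.

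For the upper bound, I use $|\mu(1+2\vartheta\mu)\langle u,v\rangle_H| \leq \tfrac{1+2\vartheta\mu}{2}(|u|_H^2+\mu^2|v|_H^2)$ together with the upper estimates $-2\mu\int_D\mathfrak{b}(\xi,u)\,d\xi \leq 2c\mu(|D|+|u|_{L^{\la+1}}^{\la+1})$ (case $\la\in(1,3]$) and $\leq 2c\mu(|D|+|u|_H^2)$ (case $\la=1$). Each remaining term is then trivially majorized by a constant, depending on $\bar\mu,\bar\vartheta$, times $K_\mu(u,v)+1$.

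For the lower bound, I apply Young with a free parameter $\epsilon\in(0,1)$:
\begin{equation*}
|\mu(1+2\vartheta\mu)\langle u,v\rangle_H| \leq \frac{(1+2\vartheta\mu)^2}{4\epsilon}|u|_H^2 + \epsilon\mu^2|v|_H^2.
\end{equation*}
The auxiliary function $g(y):=(1+2y)^2/[2(1+2y+2y^2)]$ is increasing on $[0,\infty)$ and strictly less than $1$; picking $\epsilon\in(g(\bar\vartheta\bar\mu),1)$ keeps both the coefficient of $|u|_H^2$ (which equals $\tfrac{1}{2}+\vartheta\mu+\vartheta^2\mu^2-(1+2\vartheta\mu)^2/(4\epsilon)$) and the coefficient $1-\epsilon$ of $\mu^2|v|_H^2$ uniformly bounded below by positive constants over $\vartheta\leq\bar\vartheta$, $\mu\leq\bar\mu$. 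In the polynomial case $\la\in(1,3]$, \eqref{ghc140} directly yields $-2\mu\int_D\mathfrak{b}\geq 2\kappa\mu|u|_{L^{\la+1}}^{\la+1}$, producing the $\mu|u|_{L^{\la+1}}^{\la+1}$ contribution of $K_\mu$.

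The main obstacle is the Lipschitz case $\la=1$, where \eqref{ghc430} only yields the \emph{negative} lower bound $-2\mu\int_D\mathfrak{b}\geq-2L_b\mu|u|_H^2$. To absorb it I split via Poincaré as $\mu|u|_{H^1}^2\geq r\mu|u|_{H^1}^2+(1-r)\mu\alpha_1|u|_H^2$ and exploit the strict inequality $L_b<\alpha_1$ from \eqref{sk3-tris} to select $r\in(0,1)$ (depending on $\bar\mu,\bar\vartheta$) so that the combined coefficient of $|u|_H^2$ remains uniformly positive. Since $|u|_{L^{\la+1}}^{\la+1}=|u|_H^2$ when $\la=1$, the $\mu|u|_{L^{\la+1}}^{\la+1}$ term of $K_\mu$ is then automatically controlled by $|u|_H^2$, completing the argument.
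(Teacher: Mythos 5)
Your decomposition of $K_{\vartheta,\mu}$, the Young inequality on the cross term, and the use of the pointwise bounds on $\mathfrak{b}$ is exactly the route the paper takes. You are actually more careful than the paper on two points that it glosses over: you track how the Young parameter $\epsilon$ must be chosen (via the auxiliary function $g$) so that the coefficients of $|u|_H^2$ and $\mu^2|v|_H^2$ stay uniformly positive for $\vartheta\mu \leq \bar\vartheta\bar\mu$, whereas the paper fixes $\epsilon = 2/3$ and then writes a lower bound $\mu^2/3\,|v|_H^2$ that silently drops the $(1+2\vartheta\mu)$ prefactor of the cross term; and you correctly notice that in the Lipschitz case \eqref{ghc430} produces a \emph{negative} contribution $-2\mu\int\mathfrak{b}$ that must be absorbed (the paper simply writes ``$c\,\mu|u|_{L^{\la+1}}^{\la+1}$'' and declares the lower bound ``clear''). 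Your idea of splitting $\mu|u|_{H^1}^2$ via Poincar\'e to generate a compensating multiple of $\mu|u|_H^2$ is the right fix.

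There is, however, a quantitative gap in the Poincar\'e step. Feeding the stated bound $\mathfrak{b}(\xi,\si) \leq L_b|\si|^2$ from \eqref{ghc430} into $-2\mu\int_D\mathfrak{b}$ produces $-2L_b\,\mu\,|u|_H^2$, with a factor $2L_b$. After splitting $\mu|u|_{H^1}^2 \geq r\mu|u|_{H^1}^2 + (1-r)\alpha_1\mu|u|_H^2$, the $\mu$-linear part of the $|u|_H^2$ coefficient is $(1-r)\alpha_1 - 2L_b$, which is negative for every $r \in (0,1)$ unless $2L_b < \alpha_1$. The $\mu$-independent piece $\tfrac12 - \tfrac{1}{4\epsilon}$ can only absorb a bounded multiple of $\mu$, so for $L_b \in (\alpha_1/2,\,\alpha_1)$ and $\bar\mu$ large enough the combined coefficient goes negative and the argument as written fails; $L_b < \alpha_1$ alone does not let you ``select $r\in(0,1)$'' as you claim. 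The repair is to sharpen the bound on the antiderivative: from $\partial_\si b \leq L_b$ one actually gets $\mathfrak{b}(\xi,\si) = \mathfrak{b}(\xi,0) + b(\xi,0)\si + \int_0^\si(\si-\tau)\partial_\tau b(\xi,\tau)\,d\tau \leq \tfrac{L_b}{2}\si^2 + b(\xi,0)\si + \mathfrak{b}(\xi,0)$, hence for any $\delta>0$ a choice of antiderivative with $\mathfrak{b}(\xi,\si) \leq \bigl(\tfrac{L_b}{2}+\delta\bigr)\si^2 + C_\delta$; this turns the negative contribution into $-(L_b+2\delta)\mu|u|_H^2 - c_\delta\mu$, and now $L_b < \alpha_1$ does suffice to choose $r$ and $\delta$ so that $(1-r)\alpha_1 - (L_b+2\delta) > 0$, with the $-c_\delta\mu$ absorbed into the additive constant on the right of \eqref{h2}. (The displayed bound \eqref{ghc420} in the paper appears to be loose by this same factor of two; you should not take it at face value here.)
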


\begin{proof}
We have
\[\begin{array}{l}
\ds{K_{\vartheta,\mu}(u,v)=\mu\,|u|_{H^1}^2+\le( 1/2+\vartheta \,\mu+\vartheta^2\mu^2\r)\,|u|_H^2+\mu^2\,|v|_{H}^2}\\
\vs
\ds{+(1+2\mu\vartheta)\,\langle u,\mu v\rangle_H-2\,\mu\int_D\mathfrak{b}(\xi,u(\xi))\,d\xi.}
\end{array}\]
Due to \eqref{ghc140} and \eqref{ghc430}, since both $\mu$ and $\vartheta$ remain bounded, it is immediate to see that there exists some $c_2>0$ such that
\[K_{{\vartheta}_1,\mu}(u,v)\leq c_2\le(K_\mu(u,v)+1\r),\ \ \ \ \vartheta\leq \bar{\vartheta},\ \ \mu\leq \bar{\mu}.\]
On the other hand, since 
\[\le|\langle u,\mu v\rangle_H\r|\leq \frac 38\,|u|_H^2+\frac 23\,\mu^2\,|v|_H^2,\]
by using again \eqref{ghc140} and \eqref{ghc430}, we get
\[K_{\vartheta,\mu}(u,v)\geq \mu\,|u|_{H^1}^2+\le( 1/8+\vartheta \,\mu/4+\vartheta^2\mu^2\r)\,|u|_H^2+\mu^2/3\,|v|_{H}^2+c\,\mu|u|_{L^{\la+1}}^{\la+1}.\]
This clearly implies that there exists some $c_1>0$ such that the lower bound in \eqref{h2} is satisfied. 

\end{proof}

 We first prove a uniform bound for some moments of the invariant measure $\nu_\mu$.
 
 \begin{Lemma}
\label{a-priori1}
Under Hypotheses \ref{H1} and  \ref{H3},  we have
\begin{equation}
\label{ghc100}
\sup_{\mu \in\,(0,1/2]} \int_{{\mathcal H}_1}\le[|u|_{H^1}^2+\mu\,|v|_H^2+|u|^{\la+1}_{L^{\la+1}}\r]\,d\nu_\mu(u,v)<\infty.
\end{equation}
\end{Lemma}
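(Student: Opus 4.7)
The plan is to use the Lyapunov function $K_{\vartheta,\mu}$ introduced in \eqref{h1} in combination with the stationarity identity \eqref{ghc200}. Concretely, I would compute $N_\mu K_{\vartheta,\mu}$ explicitly and show that, for a suitable choice of $\vartheta>0$ and for all $\mu\in(0,1/2]$, one has an inequality of the form
\[
N_\mu K_{\vartheta,\mu}(u,v)\;\leq\;-\delta\,K_\mu(u,v)\;+\;C,\qquad (u,v)\in\mathcal{H}_1,
\]
with constants $\delta,C>0$ independent of $\mu$. Once this is established, inserting this bound into \eqref{ghc200} and using the equivalence \eqref{h2} gives $\delta\int K_\mu\,d\nu_\mu\leq C$, which is precisely \eqref{ghc100}.

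To produce the bound on $N_\mu K_{\vartheta,\mu}$ I would proceed as follows. Using \eqref{ghc268} together with the structure of $A_\mu$ in \eqref{amu} and $B_\mu$ in \eqref{bmu}, I would compute the drift term $\langle A_\mu z+B_\mu z,DK_{\vartheta,\mu}(z)\rangle$; the key algebraic point is that the two contributions $-2\mu\langle b(u),v\rangle_H$ (coming from $D_uK_{\vartheta,\mu}\cdot\mu v$) and $+2\mu\langle v,b(u)\rangle_H$ (coming from $D_vK_{\vartheta,\mu}\cdot(B(u)/\mu)$) cancel, and the combination of $D_vK_{\vartheta,\mu}\cdot Au/\mu$ with the cross term $\mu\langle u,v\rangle_H$ in \eqref{h1} produces exactly the crucial coercive term $-(1+2\vartheta\mu)|u|_{H^1}^2$. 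The trace contribution reduces to $\mathrm{Tr}(Q^2)$ because $Q_\mu$ charges only the velocity component and $D_v^2K_{\vartheta,\mu}=2\mu^2 I$. Putting everything together I expect an identity of the schematic form
\[
N_\mu K_{\vartheta,\mu}(u,v)=-(1+2\vartheta\mu)|u|_{H^1}^2+(1+2\vartheta\mu)\langle b(u),u\rangle_H+2\vartheta^2\mu^2\langle u,v\rangle_H+(\mu^2(\vartheta+1)-\mu)|v|_H^2+\mathrm{Tr}(Q^2).
\]

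To close the estimate I would bound $\langle b(u),u\rangle_H$ using the dissipativity of $b$: in the polynomial case $\lambda\in(1,3]$ by \eqref{sk3-bis} this gives $-c_1|u|_{L^{\lambda+1}}^{\lambda+1}+c_2|D|$, which directly produces the $\mu|u|_{L^{\lambda+1}}^{\lambda+1}$ dissipation required for \eqref{ghc100}; in the Lipschitz case $\lambda=1$ by \eqref{ghc421} it gives $(L_b+\varepsilon)|u|_H^2+c_\varepsilon|D|$, and here one absorbs it into $-|u|_{H^1}^2$ via Poincaré, $|u|_H^2\leq\alpha_1^{-1}|u|_{H^1}^2$, using the hypothesis $L_b<\alpha_1$ from \eqref{sk3-tris} to keep a strictly positive coefficient. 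The cross term $2\vartheta^2\mu^2|\langle u,v\rangle_H|$ is absorbed by Young's inequality and Poincaré into a small fraction of $|u|_{H^1}^2+\mu|v|_H^2$, and the coefficient $\mu^2(\vartheta+1)-\mu$ of $|v|_H^2$ is strictly negative and of order $\mu$ as soon as $\mu\leq 1/(2(\vartheta+1))$. Choosing $\vartheta$ small enough so that all absorptions leave strictly positive dissipation, one arrives at $N_\mu K_{\vartheta,\mu}\leq-\delta K_\mu+C$ uniformly for $\mu\in(0,1/2]$.

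The main obstacle I anticipate is a rigorous application of \eqref{ghc200} to $K_{\vartheta,\mu}$, which is not bounded and, in the polynomial case, not globally $C^2$ with a finite trace in the naive sense. The standard workaround is an approximation argument: replace $K_{\vartheta,\mu}$ by $K_{\vartheta,\mu}\wedge N$ composed with a smooth cut-off, or apply It\^o's formula to $K_{\vartheta,\mu}$ along the process $z_\mu^\gamma$ with $\mathcal{L}(\gamma)=\nu_\mu$ up to a stopping time $\tau_R=\inf\{t:\,K_\mu(z_\mu^\gamma(t))\geq R\}$, take expectations, use the local martingale property of the noise integral, send $R\to\infty$ by monotone convergence, and invoke stationarity; since a priori $\nu_\mu$-integrability of $K_\mu$ is exactly what we want to prove, one first runs this argument on finite-dimensional Galerkin truncations of \eqref{eq1-abstr} (for which the argument above goes through verbatim with the same constants $\delta,C$) and then passes to the limit using tightness of the invariant measures of the Galerkin systems.
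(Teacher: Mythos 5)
Your plan is essentially the paper's: use the Lyapunov function $K_{\vartheta,\mu}$, compute $N_\mu K_{\vartheta,\mu}$ explicitly, bound $\langle b(u),u\rangle_H$ by dissipativity (distinguishing $\la=1$ and $\la\in(1,3]$), absorb the cross term by Young/Poincar\'e, and then use the stationarity identity \eqref{ghc200}. Your schematic formula for $N_\mu K_{\vartheta,\mu}$ matches the paper's exactly: $(\mu^2(\vartheta+1)-\mu)|v|_H^2 = -(1-\mu(\vartheta+1))\mu|v|_H^2$ and $2\vartheta^2\mu^2\langle u,v\rangle_H = 2\vartheta^2\mu\langle\mu v,u\rangle_H$.

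However, your stated target inequality, $N_\mu K_{\vartheta,\mu}\leq -\delta K_\mu + C$, is too weak and your claim that $\delta\int K_\mu\,d\nu_\mu\leq C$ ``is precisely \eqref{ghc100}'' is false. Recall $K_\mu(u,v)=\mu|u|_{H^1}^2+\mu|u|_{L^{\la+1}}^{\la+1}+|u|_H^2+\mu^2|v|_H^2$, so a uniform bound on $\int K_\mu\,d\nu_\mu$ controls $\mu\int|u|_{H^1}^2\,d\nu_\mu$, $\mu\int|u|_{L^{\la+1}}^{\la+1}\,d\nu_\mu$, and $\mu^2\int|v|_H^2\,d\nu_\mu$, which all degenerate as $\mu\to 0$ and give no control on the quantities in \eqref{ghc100}. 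The dissipation that the computation actually yields — and that the paper records in \eqref{ghc255}--\eqref{ghc267} — has $O(1)$ coefficients in front of $|u|_{H^1}^2$ and $|u|_{L^{\la+1}}^{\la+1}$ and an $O(\mu)$ coefficient in front of $|v|_H^2$, i.e.\ $N_\mu K_{\vartheta,\mu}\leq -\vartheta\bigl[|u|_{H^1}^2+\mu|v|_H^2+|u|_{L^{\la+1}}^{\la+1}\bigr]+c$. This is strictly stronger than $-\delta K_\mu+C$ (since $K_\mu\leq c\bigl[|u|_{H^1}^2+\mu|v|_H^2+|u|_{L^{\la+1}}^{\la+1}\bigr]$ by Poincar\'e and $\mu\leq 1/2$, but not conversely), and it is this stronger form that, fed into \eqref{ghc200}, produces \eqref{ghc100} directly. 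Your intermediate steps — noting that \eqref{sk3-bis} gives an $O(1)$-coefficient $|u|_{L^{\la+1}}^{\la+1}$ dissipation and that the cross term is absorbed into ``a small fraction of $|u|_{H^1}^2+\mu|v|_H^2$'' — show you were effectively tracking the correct $\mu$-scaling, so this is a bookkeeping error in the stated Lyapunov inequality rather than a failure of the method; but as written the conclusion does not follow. (Relatedly, once you state the dissipation correctly, \eqref{h2} is not needed here; the paper invokes it only in later lemmas.) Your final point about rigorously justifying \eqref{ghc200} via stopping times or Galerkin approximation is a reasonable additional care; the paper applies \eqref{ghc200} directly without comment.
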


\begin{proof}
If $K_{\vartheta,\mu}$ is the function introduced in \eqref{h1}, due to \eqref{ghc268} we have
\[\begin{array}{l}
\ds{N_\mu K_{\vartheta,\mu}(u,v)=2\mu\langle u,v\rangle_{H^1}+(2\vartheta^2\mu^2+2\vartheta \mu+1)\langle u,v\rangle_H+(1+\mu(\vartheta+1))\langle \mu v,v\rangle_H}\\
\vs
\ds{-2\mu\,\langle b(u),v\rangle_H+(1+2\vartheta \mu)\langle u,A u-v+b(u)\rangle_H+2\langle \mu v,Au-v+b(u)\rangle_H+\text{Tr}\, Q^2}\\
\vs
\ds{ =-(1+2\vartheta\mu)|u|_{H^1}^2-\le(1-\mu(\vartheta+1)\r)\,\mu\,|v|_H^2+2\vartheta^2\mu\langle \mu v,u\rangle_H}\\
\vs
\ds{+(1+2\vartheta\mu)\langle b(u),u\rangle_H+\text{Tr}\, Q^2.}
\end{array}\]
This implies that
\begin{equation}
\label{ghc255}
\begin{array}{l}
\ds{N_\mu K_{\vartheta,\mu}(u,v)=-\vartheta \le[|u|_{H^1}^2+\mu\,|v|_H^2+|u|_{L^{\la+1}}^{\la+1}\r]+R_{\vartheta,\mu}(u,v),}
\end{array}\end{equation}
where
\[\begin{array}{l}
\ds{R_{\vartheta,\mu}(u,v)=-(1+2\vartheta\mu-\vartheta)|u|_{H^1}^2-\le(1-\mu(\vartheta+1)-\vartheta\r)\,\mu\,|v|_H^2}\\
\vs
\ds{+(1+2\vartheta\mu)\langle b(u),u\rangle_H+\vartheta \,|u|_{L^{\la+1}}^{\la+1}
+2\vartheta^2\mu\langle \mu v,u\rangle_H+\text{Tr}\, Q^2.}
\end{array}\]

\medskip

Now, in the estimate of $R_{\vartheta,\mu}(u,v)$, we distinguish the case $\la=1$ and $\la \in\,(1,3]$.
\medskip

If $\la=1$, due to \eqref{ghc421} for every $\e>0$ we can fix $c_\e>0$ such that
\[\langle b(u),u\rangle_H\leq (L_b+\e)|u|_H^2+c_\e\leq (L_b+\e)\a_1^{-1}\,|u|_{H^1}^2+c_\e.\]
Therefore, if we pick $\bar{\e}>0$ such that $(L_b+\bar{\e})\a_1^{-1}<1$, we get
\[\begin{array}{l}
\ds{R_{\vartheta,\mu}(u,v)\leq -((1-(L_b+\bar{\e})\a^{-1})(1+2\vartheta \mu)-\vartheta( \a_1^{-1}+1+\vartheta\,\mu^2\a_1^{-1}) )|u|_{H^1}^2}\\
\vs
\ds{-\le(1-\mu(\vartheta+1)-\vartheta(1+\vartheta\mu)\r)\,\mu\,|v|_H^2
+\text{Tr}\, Q^2+c\,(1+2\vartheta\mu).}
\end{array}\]
In particular, there exists $\bar{{\vartheta}}>0$ such that
\begin{equation}
\label{ghc433}
\sup_{\mu \in\,(0,1/2]}\,R_{\vartheta,\mu}(u,v)\leq c,\ \ \ \ \vartheta\leq \bar{{\vartheta}},\ \ \ \ \ (u,v) \in\,\mathcal{H}_1.\end{equation}

\medskip

On the other hand, if $\la \in\,(1,3]$, due to \eqref{sk3-bis}, it is possible to prove that 
\[\begin{array}{l}
\ds{R_{\vartheta,\mu}(u,v)\leq -(1+2\vartheta\mu-\vartheta(1+\vartheta\,\mu^2\a_1^{-1} ))|u|_{H^1}^2-\le(1-\mu(\vartheta+1)-\vartheta(1+\vartheta\mu)\r)\,\mu\,|v|_H^2}\\
\vs
\ds{-\le(c(1+2\vartheta\mu)-\vartheta \r)\,|u|_{L^{\la+1}}^{\la+1}
+\text{Tr}\, Q^2+c\,(1+2\vartheta\mu).}
\end{array}\]
Hence, also in this case, we can find $\bar{{\vartheta}}>0$ such that \eqref{ghc433} holds.

Thanks to \eqref{ghc255}, \eqref{ghc433}  yields
\begin{equation}
\label{ghc267}
N_\mu K_{\vartheta,\mu}(u,v)\leq -\vartheta \le[|u|_{H^1}^2+\mu\,|v|_H^2+|u|_{L^{\la+1}}^{\la+1}\r]+c,\ \ \ \ \ \vartheta\leq \bar{\vartheta},\ \ \ \mu\leq 1/2,\end{equation}
so that, from \eqref{ghc200} applied to the function $K_{\bar{\vartheta},\mu}$, we obtain
\begin{equation}
\label{ghc256}
\begin{array}{l}
\ds{\int_{{\mathcal H}_1}\le[|u|_{H^1}^2+\mu\,|v|_H^2+|u|_{L^{\la+1}}^{\la+1}\r]\,d\nu_\mu(u,v)\leq \frac{c}{\bar{\vartheta}},}
\end{array}\end{equation}
for every  $\mu \in\,(0,1/2]$. 

\end{proof}

\begin{Lemma}
\label{lemma1}
Under Hypotheses \ref{H1} and  \ref{H3}, there exists $c>0$ such that for every $\mu \in\,(0,1/2)$  and $\gamma=(\theta,\eta)  \in\,L^2(\Omega;\mathcal{H}_1)$, such that $\theta \in\,L^{\la+1}(\Omega;L^{\la+1}(D))$,  we have
\begin{equation}
\label{ghc1}
\begin{array}{l}
\ds{ \mu\,\mathbb{E}\,|u^\gamma_\mu(t)|_{H^1}^2+\mathbb{E}\,|u^\gamma_\mu(t)|_H^2+\mu^2\,\mathbb{E}\,|\partial_t u^\gamma_\mu(t)|_H^2+\mu\,\mathbb{E}|u^\gamma_\mu(t)|^{\la+1}_{L^{\la+1}}}\\
\vs
\ds{\leq c\,\le(\mu\,\mathbb{E}\,|\theta|_{H^1}^2+\mathbb{E}\,|\theta|_H^2+\mu^2\,\mathbb{E}\,|\eta|_H^2+\mu\,\mathbb{E}\,|\theta|_{L^{\la+1}}^{\la+1}+t+1\r).}
\end{array}
\end{equation}

\end{Lemma}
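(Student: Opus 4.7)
The strategy is to apply It\^o's formula to the Lyapunov function $K_{\vartheta,\mu}$ defined in \eqref{h1}, combined with the differential inequality \eqref{ghc267} already derived in the proof of Lemma \ref{a-priori1}, and then use the two-sided comparison \eqref{h2} between $K_{\vartheta,\mu}$ and the natural energy $K_\mu$ in \eqref{h5} to recover the displayed bound.

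Concretely, I would fix $\vartheta=\bar{\vartheta}$ as in the proof of Lemma \ref{a-priori1} and apply the It\^o formula to the process $K_{\bar{\vartheta},\mu}(z_\mu^\gamma(t))$, where $z_\mu^\gamma(t)=(u_\mu^\gamma(t),\partial_t u_\mu^\gamma(t))$. Since the second derivative of $K_{\bar{\vartheta},\mu}$ in the $v$-variable is just $2\mu^2 I$ and $\mathrm{Tr}\,Q^2<\infty$ by Hypothesis \ref{H1}, the It\^o correction term matches exactly the $\mathrm{Tr}\,Q^2$ term that was already collected into $N_\mu K_{\bar{\vartheta},\mu}$ in the derivation of \eqref{ghc255}. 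The stochastic term is
\[
M_t:=\int_0^t D_v K_{\bar{\vartheta},\mu}(z_\mu^\gamma(s))\cdot Q\,dw(s),
\]
and, using the explicit form of $D_v K_{\bar{\vartheta},\mu}$ in \eqref{ghc268}, a standard localisation argument together with the a priori bounds on the moments of $u_\mu^\gamma, \partial_t u_\mu^\gamma$ from Section \ref{sec3} (equations \eqref{ghc520}--\eqref{ghc521}) shows that $M_t$ is a genuine martingale, so $\mathbb{E} M_t=0$.

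Taking expectations in the It\^o expansion and inserting the pointwise bound \eqref{ghc267} (which is valid for $\vartheta\leq \bar{\vartheta}$ and $\mu\leq 1/2$), I obtain
\[
\mathbb{E}\,K_{\bar{\vartheta},\mu}(z_\mu^\gamma(t))
+\bar{\vartheta}\,\mathbb{E}\int_0^t\!\!\bigl[|u_\mu^\gamma(s)|_{H^1}^2+\mu|\partial_t u_\mu^\gamma(s)|_H^2+|u_\mu^\gamma(s)|_{L^{\la+1}}^{\la+1}\bigr]ds
\leq \mathbb{E}\,K_{\bar{\vartheta},\mu}(\gamma)+c\,t.
\]
Since the integral term is non-negative I may discard it, and then apply the lower bound in \eqref{h2} on the left and the upper bound on the right, yielding
\[
c_1\,\mathbb{E}\,K_\mu(u_\mu^\gamma(t),\partial_t u_\mu^\gamma(t))\leq c_2\bigl(\mathbb{E}\,K_\mu(\theta,\eta)+1\bigr)+c\,t.
\]
Recalling the explicit form \eqref{h5} of $K_\mu$, this is precisely \eqref{ghc1} after dividing by $c_1$ and absorbing constants.

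The one point that requires care, and which I would expect to be the main technical obstacle, is the rigorous application of It\^o's formula to $K_{\bar{\vartheta},\mu}$ in the polynomial case $\la\in(1,3]$. The antiderivative $\mathfrak{b}$ grows like $|u|^{\la+1}$, so $K_{\bar{\vartheta},\mu}$ is not $C^2$ with a bounded Hessian on $\mathcal{H}_1$ and one cannot invoke a generic $C^2_b$ version. The standard remedy is a Galerkin/finite-dimensional approximation: project \eqref{eq1-abstr} onto $H_n=\mathrm{span}\{e_1,\dots,e_n\}$, apply It\^o's formula to the (smooth) functional $K_{\bar{\vartheta},\mu}$ evaluated on the finite-dimensional approximations $z_\mu^{\gamma,n}$ (where all quantities are classical), obtain the inequality above uniformly in $n$, and pass to the limit using the same mild-solution convergence used in \cite{bdp,chow} to construct $z_\mu^\gamma$ in $\mathcal{H}_1$. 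The uniform control needed for this limit is exactly the one provided by the estimate itself, so the argument must be closed by a standard stopping-time truncation that bounds $|u_\mu^\gamma|_{L^\infty}$ before passing to the limit, relying on the embedding $H^1\hookrightarrow L^\infty$ in $d=1$ valid whenever $\la\in(1,3]$.
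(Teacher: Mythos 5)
Your proposal follows exactly the paper's argument: apply Itô's formula to $K_{\bar{\vartheta},\mu}$, invoke the pointwise bound \eqref{ghc267} from the proof of Lemma \ref{a-priori1}, integrate, take expectations, and then translate back to the natural energy $K_\mu$ via \eqref{h2}. Your extra paragraph about the Galerkin/stopping-time justification of Itô's formula in the polynomial case addresses a technical point the paper glosses over, but it does not change the route; the only small slip is that you cite \eqref{ghc520}--\eqref{ghc521} (bounds for the parabolic solution $u^\theta$) when justifying the martingale property of $M_t$, whereas the relevant moment bounds are those for the wave solution $z_\mu^\gamma$ from \cite{bdp} and \cite{chow} recalled in Section \ref{sec3}.
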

\begin{proof}
If we apply  It\^o's formula to $(u^\gamma_\mu(t),\partial_t u^\gamma_\mu(t))$ and to the function $K_{\vartheta,\mu}$ introduced in \eqref{h1}, due to \eqref{ghc268} we have
\[dK_{\vartheta,\mu}(u^\gamma_\mu(t),\partial_t u^\gamma_\mu(t))=
N_\mu K_{\vartheta,\mu}(u^\gamma_\mu(t),\partial_t u^\gamma_\mu(t))\,dt+\langle (1+2\mu\theta)\,u^\gamma_\mu(t)+2\mu\, \partial_t u^\gamma_\mu(t),Qdw(t)\rangle.\]
Due to \eqref{ghc267}, this implies
\[dK_{\vartheta,\mu}(u^\gamma_\mu(t),\partial_t u^\gamma_\mu(t))\leq c\,dt+\langle (1+2\mu\theta)\,u^\gamma_\mu(t)+2\mu\, \partial_t u^\gamma_\mu(t),Qdw(t)\rangle,\ \ \ \ \vartheta\leq \bar{\vartheta},\ \ \ \mu\leq 1/2.\]
Integrating with respect to $t$ and then taking expectation, we get
\[\mathbb{E} K_{\vartheta,\mu}(u^\gamma_\mu(t),\partial_t u^\gamma_\mu(t))\leq \mathbb{E} K_{\vartheta,\mu}(\theta,\eta)+c\,t,\ \ \ \ \vartheta\leq \bar{\vartheta},\ \ \ \mu\leq 1/2.\]
Therefore, \eqref{ghc1} follows from estimate \eqref{h2}.

\end{proof}

\section{Uniform bounds for the exponential bounds of equation \eqref{eq1-abstr}}
\label{sec9}

We prove here some exponential estimates for the moments of the invariant measure $\nu_\mu$, for the solutions $(u^\gamma_\mu,\partial_t u^\gamma_\mu)$ of equation \eqref{eq1-abstr}, with random  initial condition $\gamma$,  and for the solution $u^\theta$ of equation \eqref{eq1-par}, with random initial condition $\gamma$.

\begin{Lemma}
Under Hypotheses \ref{H1} and  \ref{H3},  there exists $\eta>0$ such that
\begin{equation}
\label{h4}
\sup_{\mu \in\,(0,1/2]} \int_{{\mathcal H}_1}\exp\le(\eta\le[\mu\,|u|_{H^1}^2+|u|_H^2+\mu^2\,|v|_H^2+\mu\,|u|^{\la+1}_{L^{\la+1}}\r]\r)\,d\nu_\mu(u,v)<\infty.
\end{equation}
\end{Lemma}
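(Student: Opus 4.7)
The plan is to upgrade the drift inequality \eqref{ghc267} into a super-Lyapunov estimate of the form $N_\mu \Phi_\eta \leq -\kappa\,\Phi_\eta + C$ for $\Phi_\eta(u,v) := \exp(\eta K_{\vartheta,\mu}(u,v))$, with $\eta > 0$ sufficiently small and $\kappa, C$ independent of $\mu \in (0,1/2]$, and then to integrate this inequality against $\nu_\mu$. Fix $\vartheta \leq \bar\vartheta$ from the previous lemma. Since the martingale part of $dK_{\vartheta,\mu}$ along trajectories of \eqref{eq1-abstr} is $\langle (1+2\mu\vartheta)u + 2\mu v,\ Q\,dw\rangle$ (as noted in the proof of Lemma \ref{lemma1}), the chain rule applied to $\Phi_\eta$ yields
\begin{equation*}
N_\mu \Phi_\eta(u,v) \;=\; \eta\,\Phi_\eta(u,v)\,\Bigl[\,N_\mu K_{\vartheta,\mu}(u,v)\,+\,\tfrac{\eta}{2}\,\bigl|Q\bigl[(1+2\mu\vartheta)u + 2\mu v\bigr]\bigr|_H^2\,\Bigr].
\end{equation*}

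The key step is to absorb the It\^o correction into the negative drift. Condition \eqref{ghc301} implies $\|Q\|_{\mathcal{L}(H)} < \infty$; hence, using $\mu^2|v|_H^2 \leq \tfrac12\mu|v|_H^2$ for $\mu \leq 1/2$ together with Poincar\'e's inequality $|u|_H^2 \leq \alpha_1^{-1}|u|_{H^1}^2$, we obtain $|Q[(1+2\mu\vartheta)u + 2\mu v]|_H^2 \leq C\,\mathcal{K}(u,v)$, where $\mathcal{K}(u,v) := |u|_{H^1}^2 + \mu|v|_H^2 + |u|_{L^{\lambda+1}}^{\lambda+1}$ is precisely the coercive quantity in \eqref{ghc267}. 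Choosing $\eta \leq \vartheta/C$ gives $N_\mu K_{\vartheta,\mu} + \tfrac{\eta}{2}|Q[\,\cdot\,]|_H^2 \leq -\tfrac{\vartheta}{2}\mathcal{K}(u,v) + c$. A simple splitting $|u|_{H^1}^2 = \tfrac12|u|_{H^1}^2 + \tfrac12|u|_{H^1}^2$ followed by Poincar\'e on one half, together with $\mu|v|_H^2 \geq \mu^2|v|_H^2$ for $\mu\leq 1/2$, shows that $\mathcal{K}(u,v) \geq \delta\,K_\mu(u,v)$ uniformly in $\mu \in (0,1/2]$, and the upper bound $K_{\vartheta,\mu} \leq c_2(K_\mu + 1)$ in \eqref{h2} then gives $-\mathcal{K} \leq -\delta K_{\vartheta,\mu}/c_2 + \delta$. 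Using the identity $\Phi_\eta K_{\vartheta,\mu} = \eta^{-1}\Phi_\eta \ln\Phi_\eta$, these estimates combine into
\begin{equation*}
N_\mu \Phi_\eta \;\leq\; -a\,\Phi_\eta \ln \Phi_\eta \,+\, b\,\Phi_\eta
\end{equation*}
with $a, b > 0$ independent of $\mu$. The elementary pointwise inequality $-ax\ln x + bx \leq -\kappa x + a\,e^{(b+\kappa)/a - 1}$, valid for all $x \geq 0$ and every $\kappa > 0$, finally produces the Lyapunov estimate
\begin{equation*}
N_\mu \Phi_\eta(u,v) \;\leq\; -\kappa\,\Phi_\eta(u,v) + C, \qquad (u,v) \in \mathcal{H}_1,
\end{equation*}
with $\kappa, C$ uniform in $\mu \in (0,1/2]$.

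With the Lyapunov inequality in hand, invariance \eqref{ghc200} formally yields $\kappa \int \Phi_\eta\,d\nu_\mu \leq C$, hence a uniform exponential bound for $\int \exp(\eta K_{\vartheta,\mu})\,d\nu_\mu$; the statement \eqref{h4} then follows from the lower equivalence $K_{\vartheta,\mu} \geq c_1 K_\mu$ in \eqref{h2} upon replacing $\eta$ by $\eta c_1$. The delicate technical point, which I expect to be the main obstacle, is that $\Phi_\eta$ itself is not obviously admissible in \eqref{ghc200}. I would remedy this by introducing a family of test functions $\Phi_{\eta,R}(u,v) := \psi_R(K_{\vartheta,\mu}(u,v))$, where $\psi_R \in C^2(\mathbb{R})$ is bounded, monotone increasing, coincides with $x \mapsto e^{\eta x}$ on $(-\infty, R]$, is concave on $[R,\infty)$, and preserves the structural inequalities $0 \leq \psi_R' \leq \eta\,\psi_R$ and $|\psi_R''| \leq \eta^2\,\psi_R$ throughout $\mathbb{R}$. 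For such $\psi_R$ the entire computation above carries over verbatim to yield $N_\mu \Phi_{\eta,R} \leq -\kappa\,\Phi_{\eta,R} + C$ with constants independent of $R$; the polynomial moment bound \eqref{ghc100} combined with the boundedness of $\Phi_{\eta,R}$ makes the invariance relation \eqref{ghc200} applicable, giving $\int \Phi_{\eta,R}\,d\nu_\mu \leq C/\kappa$; and monotone convergence as $R \to \infty$ concludes. The crux is verifying that the truncation $\psi_R$ can be constructed respecting the convexity-type inequality $|\psi_R''| \leq \eta^2 \psi_R$ globally, which is what keeps the It\^o correction dominated by the drift in the truncated problem.
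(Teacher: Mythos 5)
Your proposal is correct and follows essentially the same route as the paper: both construct the exponential test function $\Phi = \exp(\delta K_{\vartheta,\mu})$, compute $N_\mu\Phi$ via the chain rule, absorb the It\^o correction (controlled through $|Q D_v K_{\vartheta,\mu}|_H^2$ with the Poincar\'e split and a small $\delta$) into the negative drift coming from \eqref{ghc267}, integrate against $\nu_\mu$ via \eqref{ghc200}, and finish with the equivalence \eqref{h2} and an elementary pointwise inequality of the form $e^{\delta r}(r-c)\geq e^{\delta r}-\lambda_3$ (your $-ax\ln x + bx \leq -\kappa x + \text{const}$ is the same inequality written multiplicatively). The one place you go beyond the paper is in flagging and carefully handling the admissibility of $\Phi$ in \eqref{ghc200} via a bounded $C^2$ truncation $\psi_R$ with $0\leq\psi_R'\leq\eta\psi_R$ and $|\psi_R''|\leq\eta^2\psi_R$ and a monotone-convergence passage — the paper applies \eqref{ghc200} to the unbounded test function directly without comment, so your version is the more rigorous one.
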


\begin{proof}
For every $\vartheta, \mu, \d>0$, we define
\begin{equation}
\label{h6}
\Phi_{\vartheta, \mu,\d}(u,v)=\exp\le(\d\,K_{\vartheta,\mu}(u,v)\r),\end{equation}
where $K_{\vartheta,\mu}$ is the function introduced in \eqref{h1}.
Clearly, we have
\[N_\mu \Phi_{\vartheta, \mu,\d}(u,v)= \d\,\Phi_{\vartheta, \mu,\d}(u,v) N_\mu K_{\vartheta,\mu}(u,v)+\frac {\d^2}{2\mu^2}\Phi_{\vartheta, \mu,\d}(u,v) |Q(D_v K_{\vartheta,\mu}(u,v))|_H^2.\]
Due to \eqref{ghc268}, we have
\[ |Q(D_v K_{\vartheta,\mu}(u,v))|_H^2=\mu^2(1+2\vartheta \mu)^2\,|Qu|_H^2+4\mu^4|Qv|_H+4\mu^3(1+2\vartheta \mu)\langle Qu,Qv\rangle_H.\]
Therefore, thanks to \eqref{ghc267}, for every  $\mu\leq 1/2$ we have
\[\begin{array}{l}
\ds{N_\mu \Phi_{\bar{\vartheta}, \mu,\d}(u,v)\leq 
-\bar{\vartheta} \d\,\Phi_{\bar{\vartheta}, \mu,\d}(u,v)\le[|u|_{H^1}^2+\mu\,|v|_H^2+|u|_{L^{\la+1}}^{\la+1}-\frac c{\bar{\vartheta}}\r]}\\
\vs
\ds{+ {2 \d^2} \Phi_{\bar{\vartheta}, \mu,\d}(u,v)\le[(1/2+\bar{\vartheta} \mu)^2\,|Qu|_H^2+\mu^2|Qv|_H+\mu(1+2\bar{\vartheta} \mu)\langle Qu,Qv\rangle_H\r] }\\
\vs
\ds{\leq -\bar{\vartheta} \d\,\Phi_{\bar{\vartheta}, \mu,\d}(u,v)\le[\frac 12|u|_{H^1}^2+\le(\frac {\a_1}2 -\frac{4\d\,\|Q\|^2}{\bar{\vartheta}} (1/2+\bar{\vartheta} \mu)^2\r)|u|_H^2 \r.}\\
\vs
\ds{\le.+\le(1-\frac{3\d\mu\,\|Q\|^2}{\bar{\vartheta}}\r) \mu\,|v|_H^2+|u|_{L^{\la+1}}^{\la+1}-\frac c{\bar{\vartheta}}\r].}
\end{array}\]
In particular, it is immediate to check that there exist $\bar{\d}>0$ and $\la_1, \la_2>0$ such that 
\begin{equation}
\label{h13}
\begin{array}{l}
\ds{N_\mu \Phi_{\bar{\vartheta}, \mu,\bar{\d}}(u,v)\leq -\la_1\,\bar{\vartheta}\,\bar{\d}\,\Phi_{\bar{\vartheta}, \mu,\bar{\d}}(u,v)\le[|u|_{H^1}^2+|u|^2_H+\mu\,|v|_H^2+|u|_{L^{\la+1}}^{\la+1}-\frac {\la_2}{\bar{\vartheta}}\r].}
\end{array}\end{equation}

Now, if we integrate both sides above with respect to the invariant measure $\nu_\mu$, according to \eqref{ghc200} we obtain
\begin{equation}
\label{h8}
\int_{\mathcal{H}_1} \Phi_{\bar{\vartheta}, \mu,\bar{\d}}(u,v)\le[|u|_{H^1}^2+|u|^2_H+\mu\,|v|_H^2+|u|_{L^{\la+1}}^{\la+1}-\frac {\la_2}{\bar{\vartheta}}\r]\,d\nu_\mu\leq 0,\ \ \ \ \mu\leq 1/2.\end{equation}
Thanks to \eqref{h2}, this yields
\[\begin{array}{l}
\ds{\frac 1{c_2}\int_{\mathcal{H}_1} \Phi_{\bar{\vartheta}, \mu,\bar{\d}}(u,v)\le[K_{\bar{\vartheta},\mu}(u,v)-\frac{c_2(\bar{\vartheta}+\la_2)}{\bar{\vartheta}}\r]\,d\nu_\mu\leq \int_{\mathcal{H}_1} \Phi_{\bar{\vartheta}, \mu,\bar{\d}}(u,v)\le[K_{\mu}(u,v)-\frac{\la_2}{\bar{\vartheta}}\r]\,d\nu_\mu}\\
\vs
\ds{\leq \int_{\mathcal{H}_1} \Phi_{\bar{\vartheta}, \mu,\bar{\d}}(u,v)\le[|u|_{H^1}^2+|u|^2_H+\mu\,|v|_H^2+|u|_{L^{\la+1}}^{\la+1}-\frac {\la_2}{\bar{\vartheta}}\r]\,d\nu_\mu\leq 0,\ \ \ \ \mu\leq 1/2.}
\end{array}\]
It is easy to prove that there exists some $\la_3>0$ such that 
\[e^{\bar{\d} r}\le(r-\frac{c_2(\bar{\vartheta}+\la_2)}{\bar{\vartheta}}\r)\geq e^{\bar{\d} r}-\la_3,\ \ \ \ \ r\geq 0,\]
so that for every $\mu\leq 1/2$
\[\int_{\mathcal{H}_1} \le[\Phi_{\bar{\vartheta}, \mu,\bar{\d}}(u,v)-\la_3\r]\,d\nu_\mu\leq \int_{\mathcal{H}_1} \Phi_{\bar{\vartheta}, \mu,\bar{\d}}(u,v)\le[K_{\bar{\vartheta},\mu}(u,v)-\frac{c_2(\bar{\vartheta}+\la_2)}{\bar{\vartheta}}\r]\,d\nu_\mu\leq 0.\]
By using again \eqref{h2}, 
this implies
\[\int_{\mathcal{H}_1} \exp\le(\bar{\d}c_1\,K_\mu(u,v)\r)\,d\nu_\mu\leq \int_{\mathcal{H}_1} \Phi_{\bar{\vartheta}, \mu,\bar{\d}}(u,v)\,\d\nu_\mu\leq \la_3,\ \ \ \ \mu\leq 1/2,\]
so that 
 \eqref{h4} follows, with $\eta=\bar{\d}\,c_1$.

\end{proof}

\begin{Lemma}
\label{lemma-c2}
Under Hypotheses \ref{H1} and  \ref{H3}, there exist $\eta>0$ and  $c>0$ such that for every $\mu \in\,(0,1/2)$, $T>0$  and $\gamma=(\theta,\eta)  \in\,L^2(\Omega;\mathcal{H}_1)$, such that $\theta \in\,L^{\la+1}(\Omega;L^{\la+1}(D))$,  we have
\begin{equation}
\label{h7}
\begin{array}{l}
\ds{\mathbb{E}\exp\le(\eta\le[ \mu\,|u^\gamma_\mu(t)|_{H^1}^2+|u^\gamma_\mu(t)|_H^2+\mu^2\,|\partial_t u^\gamma_\mu(t)|_H^2+\mu\,|u^\gamma_\mu(t)|^{\la+1}_{L^{\la+1}}\r]\r)}\\
\vs
\ds{\leq c_T\,\mathbb{E}\exp\le(\eta\le[\mu\,|\theta|_{H^1}^2+|\theta|_H^2+\mu^2\,|\eta|_H^2+\mu\,|\theta|_{L^{\la+1}}^{\la+1}\r]\r),\ \ \ t\leq T.}
\end{array}
\end{equation}

\end{Lemma}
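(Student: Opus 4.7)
The strategy is to re-run the It\^o computation used in the proof of \eqref{h4}, but now evaluating the exponential Lyapunov functional $\Phi_{\bar{\vartheta},\mu,\bar{\d}}(u,v) = \exp(\bar{\d}\,K_{\bar{\vartheta},\mu}(u,v))$ of \eqref{h6} along the trajectory $z^\gamma_\mu(t) = (u^\gamma_\mu(t),\partial_t u^\gamma_\mu(t))$, rather than integrating it against the invariant measure. The essential input is the pointwise generator bound \eqref{h13}: dropping the manifestly nonnegative dissipation term $|u|_{H^1}^2+|u|_H^2+\mu|v|_H^2+|u|_{L^{\la+1}}^{\la+1}$ yields
\[
N_\mu\,\Phi_{\bar{\vartheta},\mu,\bar{\d}}(u,v)\leq C_0\,\Phi_{\bar{\vartheta},\mu,\bar{\d}}(u,v),\qquad C_0:=\la_1\la_2\bar{\d},
\]
uniformly in $\mu\in(0,1/2]$.

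Applying It\^o's formula one finds $d\Phi_{\bar{\vartheta},\mu,\bar{\d}}(z^\gamma_\mu(t)) = N_\mu\Phi_{\bar{\vartheta},\mu,\bar{\d}}(z^\gamma_\mu(t))\,dt + dM_t$, where $M$ is the local martingale produced by the $Q_\mu\,dw$ contribution. Localizing by $\tau_n := \inf\{s\geq 0 : |z^\gamma_\mu(s)|_{\H_1} \geq n\}$ (for which $\tau_n\to\infty$ a.s.\ thanks to the pathwise wellposedness of Section~\ref{sec3} together with the polynomial bound \eqref{ghc300}), taking expectation (which kills the martingale on $[0,t\wedge\tau_n]$), and applying Gronwall's inequality give $\E\Phi_{\bar{\vartheta},\mu,\bar{\d}}(z^\gamma_\mu(t\wedge\tau_n)) \leq e^{C_0 t}\E\Phi_{\bar{\vartheta},\mu,\bar{\d}}(\theta,\eta)$. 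Fatou's lemma then delivers the unlocalized bound
\[
\E\,\Phi_{\bar{\vartheta},\mu,\bar{\d}}(z^\gamma_\mu(t))\leq e^{C_0 T}\,\E\,\Phi_{\bar{\vartheta},\mu,\bar{\d}}(\theta,\eta),\qquad t\leq T,\ \mu\in(0,1/2].
\]

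To pass from $K_{\bar{\vartheta},\mu}$ to the target quantity $K_\mu(u,v) = \mu|u|_{H^1}^2+|u|_H^2+\mu^2|v|_H^2+\mu|u|_{L^{\la+1}}^{\la+1}$ we invoke \eqref{h2}: since $c_1 K_\mu\leq K_{\bar{\vartheta},\mu}\leq c_2(K_\mu+1)$, we have $\exp(\bar{\d} c_1 K_\mu)\leq \Phi_{\bar{\vartheta},\mu,\bar{\d}}\leq e^{\bar{\d} c_2}\exp(\bar{\d} c_2 K_\mu)$, so that setting $\eta := \bar{\d}c_1$ produces
\[
\E\exp(\eta K_\mu(z^\gamma_\mu(t)))\leq e^{C_0 T+\bar{\d} c_2}\,\E\exp((c_2/c_1)\eta K_\mu(\theta,\eta)),
\]
which is \eqref{h7} up to the discrepancy factor $c_2/c_1$ in the exponent on the right. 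The two main technical obstacles are (i) the rigorous localization and passage to the limit $n\to\infty$ with the exponentially growing functional $\Phi$, particularly in the polynomial case $\la\in(1,3]$ where $B_\mu$ is only locally Lipschitz on $\H_1$ (this can be handled by first truncating $b$ at level $N$, running the above Gronwall scheme for the truncated system in which all exponential moments are manifestly finite, and then removing the truncation via the pathwise bounds of Lemma \ref{l5.2}); and (ii) the exact matching of the exponent $\eta$ on the two sides of \eqref{h7}, which does not follow mechanically from \eqref{h2} since generally $c_2/c_1>1$. This matching is the delicate point and can be achieved by a more careful tuning of the corrective terms in $K_{\bar{\vartheta},\mu}$ so that $c_1$ and $c_2$ differ only by a bounded additive constant that is then absorbed into the multiplicative prefactor $c_T$.
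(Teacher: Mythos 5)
Your proof follows the same route as the paper: apply It\^o's formula to $\Phi_{\bar\vartheta,\mu,\bar\d}=\exp(\bar\d K_{\bar\vartheta,\mu})$ along $z^\gamma_\mu$, use the generator bound \eqref{h13}, drop the nonnegative dissipation to get a linear Gronwall inequality, and translate back to $K_\mu$ via \eqref{h2}. So the core of the argument matches.

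You are right to flag the two technical points. The localization issue in the polynomial case is real (the paper leaves it implicit both here and already in the proof of \eqref{h4}), and the truncation scheme you sketch, together with Lemma \ref{l5.2}, is a reasonable way to justify dropping the Skorokhod/stochastic-integral term and passing to the limit. On the second point, however, the remedy you propose does not quite work: $c_1$ and $c_2$ in \eqref{h2} are \emph{multiplicative} constants, and they cannot be made equal (nor can the discrepancy be made purely additive) by tuning $K_{\vartheta,\mu}$, because the term $-2\mu\int_D\mathfrak{b}(\xi,u)\,d\xi$ is, by \eqref{ghc140}, sandwiched between $2\kappa\,\mu|u|_{L^{\la+1}}^{\la+1}$ and $2c\,\mu\big(|u|_{L^{\la+1}}^{\la+1}+|D|\big)$ with genuinely different multiplicative constants, and the cross term $\mu\langle u,v\rangle_H$ similarly forces a slack between the $|u|_H^2,\mu^2|v|_H^2$ coefficients. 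The clean way to close this gap is to \emph{not} convert both sides to $K_\mu$: the Gronwall step gives $\mathbb{E}\,\Phi_{\bar\vartheta,\mu,\bar\d}(z^\gamma_\mu(t))\leq c_T\,\mathbb{E}\,\Phi_{\bar\vartheta,\mu,\bar\d}(\gamma)$ with the \emph{same} functional $\Phi_{\bar\vartheta,\mu,\bar\d}$ on both sides; when $\gamma=\gamma_\mu\sim\nu_\mu$, the proof of \eqref{h4} already yields $\sup_{\mu\leq 1/2}\int\Phi_{\bar\vartheta,\mu,\bar\d}\,d\nu_\mu\leq\la_3$, and the one-sided inequality $\Phi_{\bar\vartheta,\mu,\bar\d}\geq\exp(\bar\d c_1 K_\mu)\geq\exp(\bar\d c_1|u|_H^2)$ then gives exactly what is needed for \eqref{h10}. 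In other words, \eqref{h7} as literally stated (same $\eta$ on both sides) is a slight over-statement, but the intermediate Gronwall bound in $\Phi_{\bar\vartheta,\mu,\bar\d}$ is what the application actually uses, and it is correct.
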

\begin{proof}
If we apply It\^{o}'s formula to  $(u^\gamma_\mu(t),\partial_t u^\gamma_\mu(t))$ and to the function $\Phi_{\vartheta,\mu,\d}$ introduced in \eqref{h6}, due to \eqref{ghc268} we have
\[\begin{array}{l}
\ds{d\,\Phi_{\vartheta,\mu,\d}(u^\gamma_\mu(t),\partial_t u^\gamma_\mu(t))=
N_\mu \Phi_{\vartheta,\mu,\d}(u^\gamma_\mu(t),\partial_t u^\gamma_\mu(t))\,dt}\\
\vs
\ds{+\d\, \Phi_{\vartheta,\mu,\d}(u^\gamma_\mu(t),\partial_t u^\gamma_\mu(t))\langle (1+2\mu\vartheta)\,u^\gamma_\mu(t)+2\mu\, \partial_t u^\gamma_\mu(t),Qdw(t)\rangle.}
\end{array}\]
Due to \eqref{h13}, this implies that there exists $\bar{\vartheta},\ \bar{\d}>0$ and $\la_1,\,\la_2>0$ such that
\[\begin{array}{l}
\ds{d\,\Phi_{\bar{\vartheta},\mu,\bar{\d}}(u^\gamma_\mu(t),\partial_t u^\gamma_\mu(t))}\\
\vs
\ds{\leq
-\la_1\,\bar{\vartheta}\,\bar{\d}\, \Phi_{\bar{\vartheta}, \mu,\bar{\d}}(u^\gamma_\mu(t),\partial_t u^\gamma_\mu(t))\le[|u^\gamma_\mu(t)|_{H^1}^2+|u^\gamma_\mu(t)|^2_H+\mu\,|\partial_t u^\gamma_\mu(t)|_H^2+|u^\gamma_\mu(t)|_{L^{\la+1}}^{\la+1}-\frac {\la_2}{\bar{\vartheta}}\r]}\\
\vs
\ds{+\d\, \Phi_{\bar{\vartheta},\mu,\bar{\d}}(u^\gamma_\mu(t),\partial_t u^\gamma_\mu(t))\langle (1+2\mu\vartheta)\,u^\gamma_\mu(t)+2\mu\, \partial_t u^\gamma_\mu(t),Qdw(t)\rangle,\ \ \ \ \ \mu\leq 1/2.}
\end{array}\]
Therefore, by integrating first with respect to time and then by taking expectation in both sides, we get
\[\begin{array}{l}
\ds{\mathbb{E}\, \Phi_{\bar{\vartheta},\mu,\bar{\d}}(u^\gamma_\mu(t),\partial_t u^\gamma_\mu(t))\leq \mathbb{E}\, \Phi_{\bar{\vartheta},\mu,\bar{\d}}(\vartheta,\eta)+\frac{\la_1\,\la_2\,\bar{\vartheta}\,\bar{\d}}{\bar{\vartheta}}\int_0^t\mathbb{E}\,\Phi_{\bar{\vartheta},\mu,\bar{\d}}(u^\gamma_\mu(s),\partial_t u^\gamma_\mu(s))\,ds,}
\end{array}\]
and this implies 
\[\mathbb{E}\, \Phi_{\bar{\vartheta},\mu,\bar{\d}}(u^\gamma_\mu(t),\partial_t u^\gamma_\mu(t))\leq c_T\,\mathbb{E}\, \Phi_{\bar{\vartheta},\mu,\bar{\d}}(\vartheta,\eta),\ \ \ \ t\leq T.\]
Thanks to \eqref{h5}, this implies \eqref{h7} for some $\eta>0$.

\end{proof}

By using arguments analogous but considerably simpler than those used in the proof of Lemma \ref{lemma-c2}, we can prove that the following result holds. 

\begin{Lemma}
\label{lemma-c3}
Under Hypotheses \ref{H1} and  \ref{H3}, there exist $\eta>0$ and  $c>0$ such that for every $T>0$  and $\theta \in\,L^2(\Omega;\mathcal{H}_1)$,  we have
\begin{equation}
\label{h17}
\begin{array}{l}
\ds{\mathbb{E}\exp\le(\eta\,|u^\theta(t)|_{H}^2\r)\leq c_T\,\mathbb{E}\exp\le(\eta\,|\theta|_{H}^2\r),\ \ \ t\leq T.}
\end{array}
\end{equation}
\end{Lemma}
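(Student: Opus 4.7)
The plan is to adapt the proof of Lemma \ref{lemma-c2} to the parabolic setting with the simpler exponential functional $\Phi_\eta(x) := \exp(\eta\,|x|_H^2)$ in place of $\Phi_{\vartheta,\mu,\bar{\d}}$: there is no velocity variable, no small mass parameter, and no $L^{\la+1}$ correction to track. The scheme would be to apply It\^o's formula to $\Phi_\eta(u^\theta(t))$, show that the Kolmogorov generator $\mathcal{L}$ of \eqref{eq1-par} satisfies $\mathcal{L}\Phi_\eta \leq C\,\Phi_\eta$ for $\eta$ sufficiently small, and then conclude by Gronwall.

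Computing formally, since $D\Phi_\eta(x) = 2\eta\,\Phi_\eta(x)\,x$ and $D^2\Phi_\eta(x) = 2\eta\,\Phi_\eta(x)\le(I + 2\eta\,x\otimes x\r)$, one obtains
\[
\mathcal{L}\Phi_\eta(x) = \Phi_\eta(x)\le[\eta\,\mbox{Tr}\,Q^2 + 2\eta^2\,|Qx|_H^2 - 2\eta\,|x|_{H^1}^2 + 2\eta\,\langle B(x),x\rangle_H\r].
\]
The trace is finite thanks to \eqref{ghc301}, and $|Qx|_H^2 \leq \|Q\|_{\mathcal{L}(H)}^2\,\a_1^{-1}\,|x|_{H^1}^2$. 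When $\la=1$, \eqref{ghc421} together with Poincar\'e and the spectral gap $L_b<\a_1$ yield $\langle B(x),x\rangle_H \leq (L_b+\e)\a_1^{-1}|x|_{H^1}^2 + c_\e$, so that for $\e$ and $\eta$ suitably small the combined coefficient of $|x|_{H^1}^2$ becomes non-positive and one is left with a constant plus a term $-c_0\,\eta\,|x|_{H^1}^2$. When $\la \in (1,3]$, the strong dissipation \eqref{sk3-bis} delivers $\langle B(x),x\rangle_H\leq -c_1|x|_{L^{\la+1}}^{\la+1}+c_2$, so that $-2\eta\,|x|_{H^1}^2$ alone suffices to absorb the $2\eta^2|Qx|_H^2$ term once $\eta < \a_1/\|Q\|_{\mathcal{L}(H)}^2$. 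In either regime one arrives at $\mathcal{L}\Phi_\eta(x) \leq C\,\Phi_\eta(x)$ for some $C$ independent of $x$.

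Integrating the It\^o identity over $[0,t]$, taking expectations to eliminate the stochastic-integral term, and applying Gronwall's inequality gives $\mathbb{E}\Phi_\eta(u^\theta(t)) \leq e^{Ct}\,\mathbb{E}\Phi_\eta(\theta)$, which is exactly \eqref{h17} with $c_T = e^{CT}$. The main technical obstacle is, as in the proof of Lemma \ref{lemma-c2}, to justify the It\^o expansion of the unbounded functional $\Phi_\eta$ and to guarantee that the local martingale piece has zero expectation when $B$ is only locally Lipschitz. For $\la \in (1,3]$ I would carry this out first on Galerkin approximations (or on the process stopped at $\tau_R := \inf\{t\geq 0 : |u^\theta(t)|_H \geq R\}$), then pass to the limit $R\to\infty$ by Fatou using the a priori bounds \eqref{ghc520}--\eqref{ghc521} together with the pathwise estimate on $\mathcal{L}\Phi_\eta$ just derived, which also secures the integrability of the martingale term.
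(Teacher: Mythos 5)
Your proposal is correct and matches the route the paper intends: it sketches the ``analogous but considerably simpler'' argument that the paper itself points to in place of a written-out proof of Lemma~\ref{lemma-c3}, namely applying It\^o's formula (equivalently, the Kolmogorov generator bound) to $\exp(\eta |u^\theta(t)|_H^2)$, using \eqref{ghc421} with the spectral gap $L_b<\alpha_1$ when $\lambda=1$ and \eqref{sk3-bis} when $\lambda\in(1,3]$ to get $\mathcal{L}\Phi_\eta\leq C\,\Phi_\eta$, and concluding by Gronwall. The computation of $\mathcal{L}\Phi_\eta$ and the choice of small $\eta$ are both right, and your remark on localization via stopping times or Galerkin truncation to justify the It\^o step and kill the local martingale is the standard and appropriate closing detail.
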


\bigskip

\appendix

\section{Asymptotic strong Feller property}
\label{appa}

We want to give a proof of \eqref{ghc410}, namely
 for every $x \in H$ and every
$\varphi \in C^1_b(H)$
\begin{equation}
  |D P_t \varphi(x)|_H \leq C \sqrt{P_t|\varphi|^2(x)}
                            + \alpha(t) \sqrt{P_t|D\varphi|^2_H(x)},
  \label{eq:ASF:Like}
\end{equation}
for some function $\a(t)$ such that  $\alpha(t) \to 0$, as $t \to \infty$.  This bound is a
time-asymptotic smoothing estimate for $P_t$ which implies the so-called asymptotic strong Feller condition introduced in 
\cite{HM06}.

We now demonstrate how  condition \eqref{ghc411} on $Q$ combined with condition \eqref{sk3} on $b$
imply \eqref{eq:ASF:Like}.  
We denote by $D_x u^x(t)h$ the derivative of $u^x(t)$ with respect to the initial condition $x$, along the direction $h \in\,H$. Moreover, we denote by $A^x(t)v$ the Malliavin derivative of $u^x(t)$ along the admissible perturbation $v$ of the Wiener path.
For any $\varphi \in C^1_b(H)$
we have
\begin{equation}
     \label{eq:Mal:IBP:ID}
\begin{array}{l}
\ds{ \langle D P_t \varphi(x) ,h\rangle_H 
  = \mathbb{E} \langle D \varphi(u^{x}(t)), D_x u^{x}(t)h\rangle_H}\\
  \vs
  \ds{
  = \mathbb{E}\langle  D \varphi(u^{x}(t)), D_x u^{x}(t)h - A^x(t)v\rangle_H     +\mathbb{E} \langle D \varphi(u^{x}(t)) ,A^x(t)v\rangle_H         
}\\
\vs
\ds{
  = \mathbb{E}\langle  D \varphi(u^{x}(t)), D_x u^{x}(t)h - A^x(t)v\rangle_H       
     +\mathbb{E}\, \varphi(u^{x}(t)) \int_0^t \langle v(s), dw^Q(s)\rangle_H.}
     \end{array}
\end{equation}
The
last line follows from the Malliavin integration by parts formula and the
stochastic integral is interpreted in the Skorohod sense if $v$ is not
adapted.  

Now, we define $\rho(t)= \rho^{h, v}(t) := D_x u^{x}(t)h - A^x(t)v$ and we choose
\begin{equation}
 v(t)= \alpha_{\bar{n}}\, Q^{-1}P_{\bar{n}}\, \rho(t),
  \label{eq:fb:def}
\end{equation}
where $\bar{n}$ is the integer introduced in \eqref{ghc411}.
Note
that $Q$ is invertible on $H_{\bar{n}} = \mbox{span}\{e_1, \ldots, e_{\bar{n}}\}$ according to 
the assumption \eqref{ghc411} and that this choice of $v$ is adapted.  
It is immediate to check  that $\rho(t)$ satisfies
\begin{equation*}
  \partial_t \rho(t) = \Delta \rho(t) + DB(u^x(t))\rho(t) - \alpha_{\bar{n}} P_{\bar{n}}\rho(t),\ \ \ \ \rho(0)=h.
\end{equation*}
Hence, $\rho(t)$ satisfies the following estimate
\begin{equation*}
  \frac{1}{2} \frac{d}{dt} |\rho(t)|_H^2 + |\nabla \rho(t)|_{H}^2
      + \alpha_{\bar{n}} |P_{\bar{n}} \rho(t)|_H^2
      =\langle DB(u^x(t)),\rho(t)\rangle_H \leq L_b\, |\rho(t)|_H^2.
\end{equation*}
Observe that
\[\begin{array}{l}
\ds{
  |\nabla \rho(t)|_H^2 + \alpha_{\bar{n}}\, |P_{\bar{n}} \rho(t)|_H^2
  \geq   |\nabla (I-P_{\bar{n}}) \rho(t)|_H^2 + \alpha_{\bar{n}}\, |P_{\bar{n}} \rho(t)|_H^2}\\
  \vs
  \ds{ = \sum_{k ={\bar{n}}+1}^\infty \alpha_k | \langle \rho(t), e_k\rangle_H|^2
+\sum_{k =1}^{{\bar{n}}} \alpha_{\bar{n}}\, | \langle \rho(t), e_k\rangle_H|^2  \geq \alpha_{\bar{n}} |\rho(t)|_H^2,}
  \end{array}\]
so that, thanks to \eqref{sk3} we  obtain
\begin{equation*}
  \frac{d}{dt} |\rho(t)|_H^2 + (\alpha_{\bar{n}} - L_b) |\rho(t)|_H^2 \leq 0.
\end{equation*}
This implies 
\begin{equation}
|\rho(t)|_H^2  \leq |h|_H^2 e^{-(\alpha_{\bar{n}} - L_b) t},
  \label{eq:as:bnd:1}
\end{equation}
and hence for every $t\geq 0$
\begin{equation}
  \E \le|\int_0^t \langle v(s), dw^Q(s)\rangle_H\r|^2 = \int_0^t \mathbb{E}\,|Q v(s)|_H^2\,ds\leq \a_{\bar{n}}^2 |h|_H^2\int_0^t e^{(\alpha_{\bar{n}} - L_b)s}\,ds
  \leq |h|_H^2\frac{\a_{\bar{n}}^2}{\alpha_{\bar{n}} - L_b}.  \label{eq:as:bnd:2}
\end{equation}

Therefore, returning now to \eqref{eq:Mal:IBP:ID}, thanks to bounds \eqref{eq:as:bnd:1} and \eqref{eq:as:bnd:2}
we obtain
\[
\begin{array}{l}
\ds{
  |D P_t \varphi(x)|_H
  \leq \sup_{|h|_H \leq 1} 
  \mathbb{E} \,|D \varphi(u^{x}(t))|_H |\rho(t)|_H
     +\sup_{|h|_H \leq 1} \E \left|\varphi(u^{x}(t)) \int_0^t \langle v(s),dw^Q(s)\rangle_H\right|}\\
     \vs
     \ds{
 \leq \sqrt{P_t|D\varphi|^2_H(x)} 
       \sup_{|h|_H \leq 1}  \left(\E |\rho(t)|_H^2\right)^{1/2}
      +  \sqrt{P_t|\varphi|^2(x)}  
       \sup_{|h|_H \leq 1} \left(\E \left|\int_0^t \langle v(s),dw^Q(s)\rangle_H \right|^2 \right)^{1/2}}\\
       \vs
       \ds{\leq \frac{\a_{\bar{n}}}{\sqrt{\alpha_{\bar{n}} - L_b}} \sqrt{P_t|\varphi|^2(x)}  +
       e^{- (\alpha_{\bar{n}} - L_b)t} \sqrt{P_t|D\varphi|^2_H(x)},}\end{array}\]
       which is \eqref{eq:ASF:Like}.

\bigskip

\begin{Remark}
\label{RA1}
{\em \begin{enumerate}
\item Observe that the asymptotic strong Feller  condition holds without {any conditions} on $Q$ when $L_b < \alpha_1$.  In
particular we obtain the contract estimate desired in our paper even when there is no noise.  This is natural, in this 
case the $t = \infty$ dynamics contracts exponentially to zero.
\item It should be noted that the bound (\ref{eq:ASF:Like}) would be expected to hold in a much more
degenerate situation where $\bar{n}$ does not depend on how large $L_b$ is.  Instead the condition \eqref{sk3}
must be replaced with a Hormander condition, a delicate algebraic property of the interaction between the noise $w^Q$
and the nonlinear term $b$.  For brevity of presentation we may wish to omit such details and instead refer the reader
to \cite{HM06, HM, FoldesGlattHoltzRichardsThomann2013}.
\end{enumerate}}
\qed
\end{Remark}

{\bf Acknowledgments:} This work was initiated while the two authors where visiting scholars at the Mathematical Science Research Institute (MSRI) in Berkeley, in the Fall semester 2015. They both want to thank David Herzog and Jonathan Mattingly for some interesting discussions.

\end{document}